\patchcmd{\@settitle}{\uppercasenonmath\@title}{\scshape\large}{}{}
\patchcmd{\@setauthors}{\MakeUppercase}{\scshape\normalsize}{}{}
\newtheorem{prop}{Proposition}
\newtheorem{thm}{Theorem}
\newtheorem{lem}[thm]{Lemma}
\newcommand{\st}{\text{s.t.}}
\newcommand{\define}{\mathrel{{\mathop:}{=}}}
\newenvironment{varsubequations}[1]
{%
  \addtocounter{equation}{-1}%
  \begin{subequations}
    \def\@currentlabel{#1}%

  }
  {%
  \end{subequations}\ignorespacesafterend
}
\newcommand\MyBox[2]{
  \fbox{\lower0.75cm
    \vbox to 1.7cm{\vfil
      \hbox to 1.7cm{\hfil\parbox{1.4cm}{#1\\#2}\hfil}
      \vfil}%
  }%
}
\newcommand{\defset}[3][\defsep]{\set{#2#1#3}}
\newcommand{\Defset}[3][\defsep]{\Set{#2#1#3}}
\newcommand{\set}[1]{\{#1\}}
\newcommand{\field}{\mathbb}
\newcommand{\reals}{\field{R}}
\newcommand{\R}{\reals}
\DeclareMathOperator*{\argmin}{arg\,min}
\newcommand{\ot}{\leftarrow}
\newcommand{\AC}{\text{AC}}
\newcommand{\TP}{\text{TP}}
\newcommand{\TN}{\text{TN}}
\newcommand{\FP}{\text{FP}}
\newcommand{\FN}{\text{FN}}
\newcommand{\PR}{\text{PR}}
\newcommand{\RE}{\text{RE}}
\newcommand{\FPR}{\text{FPR}}
\newcommand{\SVM}{\text{SVM}}
\newcommand{\true}{\text{true}}
\newcommand{\codename}[1]{\textsf{#1}}
\newcommand{\sign}{{\rm sign}}
\newcommand{\rev}[1]{#1}
\begin{document}

\title[MIQP and Iterative Clustering for Semi-Supervised SVMs]%
{Mixed-Integer Quadratic Optimization\\and Iterative Clustering
  Techniques for\\Semi-Supervised Support Vector Machines}

\author[J. P. Burgard, M. E. Pinheiro, M. Schmidt]%
{Jan Pablo Burgard, Maria Eduarda Pinheiro, Martin Schmidt}

\address[J. P. Burgard]{%
  Trier University,
  Department of Economic and Social Statistics,
  Universitätsring 15,
  54296 Trier,
  Germany}
\email{burgardj@uni-trier.de}

\address[M. E. Pinheiro, M. Schmidt]{%
  Trier University,
  Department of Mathematics,
  Universitätsring 15,
  54296 Trier,
  Germany}
\email{pinheiro@uni-trier.de}
\email{martin.schmidt@uni-trier.de}

\date{\today}

\begin{abstract}
  Among the most famous algorithms for solving classification problems are
support vector machines (SVMs), which find a separating hyperplane for
a set of labeled data points.
In some applications, however, labels are only available for a subset
of points.
Furthermore, this subset can be non-representative, e.g., due to
self-selection in a survey.
Semi-supervised SVMs tackle the setting of labeled and
unlabeled data and can often improve the reliability of the
results. Moreover, additional information about the size of the
classes can be available from undisclosed sources. We propose a
mixed-integer quadratic optimization (MIQP) model that covers the
setting of labeled and unlabeled data points as well as the overall
number of points in each class. Since the MIQP's solution time
rapidly grows as the number of variables increases, we introduce
an iterative clustering approach to reduce the model's size.
Moreover, we present an update rule for the required big-$M$
values, prove the correctness of the iterative clustering method as
well as derive tailored dimension-reduction and warm-starting
techniques.
Our numerical results show that our approach leads to a similar
accuracy and precision than the MIQP formulation but at much lower
computational cost.
Thus, we can solve larger problems.
With respect to the original SVM formulation, we observe that our
approach has even better accuracy and precision for biased samples.


\end{abstract}

\keywords{Semi-Supervised Learning,
Support Vector Machines,
Clustering,
Mixed-Integer Quadratic Optimization.%
%
%
\subjclass[2020]{90C11,
90C90,
90-08,
68T99%
%
%

\maketitle

\section{Introduction}
\label{sec:introduction}

Support vector machines (SVMs) are a standard approach for supervised
binary classification \parencite{boser1992,cortes1995support}. The
core idea is to find a separating hyperplane that optimally splits
the feature space in a positive and a negative side according to the
positive and negative labels of the data.

Obtaining labels for all units of interest can be costly. This is
especially the case if one has to do a classic survey to obtain the
labels. In this case, it would be favorable to train the SVM on only
partly labeled data. This yields a semi-supervised learning
setting. \textcite{Bennett1998SemiSupervisedSV} formulate and solve
the semi-supervised SVM (S$^3$VM) as a mixed-integer linear problem
(MILP). Many strategies for solving S$^3$VM have been proposed in the
following decades such as the transductive approach (TSVM) by
\textcite{TSVM,TSVM2} or manifold regularization (LapSVM) by
\textcite{LAPSVM,LAPSVM2}. Some researchers also consider a balancing
constraint as done in meanS3VM by \textcite{KONTONATSIOS201767} and in
c$^3$SVM by \textcite{Chapelle}.
Moreover, the balancing constraint proposed by \textcite{Chapelle2}
enforces that the proportion of unlabeled and labeled data on both
sides is similar to the proportion given by the labeled data.

In many cases, however, the aggregated information about the number of
positive and negative cases in a population is known from an external
source. For example, in population surveys, there are population
figures from official statistics agencies.
\rev{This setting is studied, e.g., by \textcite{Burgard2021CSDA}, who
  develop a cardinality-constrained multinomial logit model and apply
  it in the context of micro-simulations.} As another example, in
some businesses, the total amount of positive labels could be known
but not which customer has a positive or a negative label.
An intuitive example is a supermarket for which the amount of cash
payments is known. However, this information
is not ex-post attributable to the individual customers. We propose to
add this aggregated additional information to the optimization
model by imposing a cardinality constraint on the predicted labels
for the unlabeled data. As will be shown in our numerical experiments,
this improves the accuracy of the classification of the unlabeled
data. Furthermore, the inclusion of such a cardinality constraint is
very useful in the case in which the labeled data
is not a representative sample from the population. When obtaining the
labels from process data or from online surveys, the inclusion
process of the labeled data is generally not known. This is subsumed
under the non-probability sample. In this case, inverse inclusion
probability weighting, as typically done in survey sampling, is not
applicable.
By not controlling the inclusion process, strong over- or
under-coverage of relevant information in the data set is possible and
should be taken into account in the analysis.
Not accounting for possible biases in the data generally leads to
biased results.

We propose a big-$M$-based MIQP to solve the semi-supervised SVM
problem with a cardinality constraint for the unlabeled data.
\rev{Here, we restrict ourselves to the linear kernel. Other kernels
  such as Gaussian and polynomial ones can, in principle, be used as
  well.
  However, this would lead to additional nonlinear constraints in a
  our mixed-integer model and would thus significantly increase the
  computational challenge of solving the problem.
  Although we strongly suspect that the problem is NP-hard, we have no
  proof for it since we focus here on solution techniques and not on a
  formal complexity analysis of the problem.}
The cardinality constraint helps to account for biased samples since
the number of positive predictions on the population is bounded by the
constraint.
The computation time for this MIQP grows rapidly with
the number of variables---especially for an increasing number of
integer variables. We develop an algorithm that uses a
clustering-based model reduction to reduce the computation
time. Similar reduction approaches can be found for the classic
SVM using, e.g., fuzzy clustering \parencite{Fuzzy1,Fuzzy2},
clustering-based convex hulls \parencite{CBCH}, and $k$-means
clustering \parencite{Kmeans1,kmeans2}.
We prove the correctness of our iterative clustering method and
further show that it computes feasible points for the original
problem.
Hence, it also delivers proper upper bounds.
Within our iterative approach, we additionally derive a scheme
for updating the required big-$M$ values and present tailored
dimension-reduction as well as warm-starting techniques.

The paper is organized as follows.
In Section~\ref{sec:miqp-formulation}, we describe our optimization
problem and the big-$M$-based MIQP formulation.
Afterward, the clustering-based model reduction technique is presented
in Section~\ref{sec:cluster-unlabeled-points}. There, we also present
our algorithm that combines the model reduction and the MIQP
formulation. In Section~\ref{Some Improvements}, we discuss some
algorithmic improvements such as the handling of data points that are
far away from the hyperplane and the choice of $M$ in the big-$M$
formulation. In Section~\ref{section c2svm3+}, we present how to use
the solution of our algorithm to obtain the solution of the initial
MIQP formulation by fixing some points on the correct side of the hyperplane.
Finally, in Section~\ref{sec:numerical-results}, numerical results are
reported and discussed and we conclude in Section~\ref{sec:conclusion}.


\section{An MIQP Formulation for a Cardinality-Constrained
  Semi-Supervised SVM}
\label{sec:miqp-formulation}

Let $X\in \R^{d \times N}$ be the data matrix with $X_l =
[x^1, \dotsc, x^n]$ being the labeled data and $X_u = [x^{n+1},
  \dotsc, x^N]$ being the unlabeled data. Hence, we have $x^i \in
  \mathbb{R}^d$ for all $i \in [1,N] \define \set{1,\dots, N}$.
We set $m \define N - n$ and $y \in \set{-1,1}^n$ is the vector of
class labels for the labeled data.
When the data is linearly separable, the SVM provides a
hyperplane $(\omega, b)$ that separates the positively and negatively
labeled data.
In the case that the data is not linearly separable, the standard
approach is to use the $\ell_2$-SVM by \textcite{cortes1995support}
given by
\begin{varsubequations}{P1}
  \label{l2svm}
  \begin{align}
    \min_{\omega,b,\xi} \quad
    & \frac{\Vert\omega \Vert^2}{2} + C_1 \sum_{i=1}^n \xi_i
    \\
    \st \quad
    & y_i (\omega^\top x^i -b) \geq 1 - \xi_i, \quad i \in [1,n],
    \\
    & \xi_i \geq 0, \quad i \in [1, n].
  \end{align}
\end{varsubequations}
Here and in what follows, $\Vert \cdot \Vert$ denotes the Euclidean
norm.
\rev{However, other norms such as the $1$- or the max-norm could be
  used as well.}
For being able to include unlabeled data in the optimization process,
\textcite{Bennett1998SemiSupervisedSV} propose the semi-supervised SVM
(S$^3$VM).
In many applications, the aggregated information on the labels is
available, e.g., from census data.
In the following, we know the total number~$\tau$ of positive labels
for the unlabeled data from an external source.
We adapt the idea of the S$^3$VM such that we can use $\tau$ as
an additional information in the optimization model.
Our goal is to find optimal parameters $\omega^* \in \R^d$,
$b^* \in \R$, $\xi^* \in \R^n$, and $\eta^* \in \R^2$ that solve the
optimization problem
\begin{varsubequations}{P2}
  \label{EQproblem1}
  \begin{align}
    \min_{\omega,b,\xi,\eta} \quad
    & \frac{\Vert\omega \Vert^2}{2} + C_1 \sum_{i=1}^n
      \xi_i +C_2 (\eta_1 + \eta_2) \label{svmfunction}
    \\
    \st \quad
    & y_i (\omega^\top x^i -b) \geq 1 - \xi_i, \quad i \in
      [1,n], \label{labeledpart}
    \\
    & \tau - \eta_ 1 \leq  \sum_{i=n+1}^N h_{\omega,b}(x^i) \leq \tau +
      \eta_2,
      \label{unlabeled part} \\
    & \xi_i  \geq 0, \quad i \in [1, n] \label{xi1}, \\
    & \eta_1, \eta_2 \geq 0, \label{eta1}
  \end{align}
\end{varsubequations}
with
\begin{equation*}
  h_{\omega,b}(x) =
  \begin{cases}
    1, & \text{if } \omega^\top x + b \geq 0,\\
    0, & \text{otherwise}.
  \end{cases}
\end{equation*}

Note that the objective function in \eqref{svmfunction} is a
compromise between maximizing the distance between the two classes as
well as minimizing the classification error for the label and the
unlabeled data.
The penalty parameters $C_1 > 0 $ and $C_2>0$ aim to control the
importance of the slack variables $\xi$ and $\eta$, respectively.
Constraint~\eqref{labeledpart} enforces on which side of the
hyperplane the labeled data $x^i$ should lie.
Constraint~\eqref{unlabeled part} ensures that we have $\tau$
unlabeled data on the positive side.
\rev{If $\eta^*_1>0$ holds for a solution $(\omega^*, b^*,\xi^*, \eta^*)$,
  then less than $\tau$ unlabeled points are classified as positive.
  On the other hand, if $\eta^*_2 > 0$ holds, more than $\tau$
  unlabeled points are classified as positive.
  If $\eta^*_1 = \eta^*_2 =0$ holds, exactly $\tau$
  unlabeled points are classified in the positive class.}
Note that, having assigned a very high value to $C_1$ or $C_2$,
the objective function value is dominated by these slack variables.

The function $h_{\omega,b}(\cdot)$ in Constraint~\eqref{unlabeled
  part} is not continuous, which means that Problem~\eqref{EQproblem1}
cannot be easily solved by standard solvers.
A typical way to overcome this problem is to add binary variables to
turn on or off the enforcement of a constraint.
By introducing binary variables $z_i \in \set{0,1}$, $i \in [n+1, N]$,
we can reformulate the optimization Problem~\eqref{EQproblem1} using
the following big-$M$ formulation:
\begin{varsubequations}{P3}
  \label{equation2}
  \begin{align}
    \min_{\omega,b,\xi,\eta, z} \quad
    & \frac{\Vert\omega \Vert^2}{2} + C_1 \sum_{i=1}^n
      \xi_i +C_2 (\eta_1 + \eta_2) \label{2}
    \\
    \st \quad
    & y_i (\omega^\top x^i +b) \geq 1 - \xi_i, \quad i \in
      [1,n],
      \label{3} \\
    &  \omega^\top x^i +b \leq z_i M, \quad i \in [n+1,N],
      \label{4}\\
    &  \omega^\top x^i +b \geq -(1-z_i)M, \quad i \in [n+1,N],
      \label{5}\\
    & \tau - \eta_1 \leq  \sum_{i=n+1}^N z_i \leq \tau + \eta_2,
      \label{6}
    \\
    & \xi_i  \geq 0, \quad i \in [1, n],
      \label{8} \\
    & \eta_1, \eta_2 \geq 0,
      \label{sumeta}\\
    & z_i \in \{0,1\}, \quad i \in [n+1, N],
      \label{10}
  \end{align}
\end{varsubequations}
where $M$ needs to be chosen sufficiently large.
As $z_i$ is binary, Constraints~\eqref{4} and~\eqref{5} lead to
\begin{align*}
  \omega^\top x^i +b > 0 \implies z_i = 1, \quad i \in [n+1,N],\\
  \omega^\top x^i +b < 0 \implies z_i = 0,\quad i \in [n+1,N].
\end{align*}
If $x^i$ lies on the hyperplane, i.e., $\omega^\top x^i +b =0 $,
Constraints~\eqref{4} and~\eqref{5} hold for $z_i = 1$ and $z_i = 0$.
In this case, it can be counted either on
the positive or on the negative side.
\rev{For this reason, Problem~\eqref{equation2} is not formally
  equivalent to Problem~\eqref{EQproblem1}.}
Reformulation \eqref{equation2} is a mixed-integer quadratic problem
(MIQP) in which all constraints are linear but the objective function
is quadratic.
We  refer to this problem as CS$^3$VM.

\rev{Since we now stated our first model, let us shed some light on
  the results depending on whether the standard SVM or CS$^3$VM is
  used.
  Figure~\ref{fig:draws} shows a $2$-dimensional example data set and
  the corresponding hyperplanes for SVM and CS$^3$VM.
  In this case, $\tau = 11$, i.e., $11$ unlabeled points belong the
  the positive class. Note that SVM only classifies $6$ unlabeled
  points as positive, while CS$^3$VM classifies $11$ as such.
  The point that lies on the CS$^3$VM hyperplane is classified as
  positive because the binary variable regarding this point is $1$.
  This example shows that using $\tau$ as additional information can
  improve the classification of unlabeled points.}
\begin{figure}
  \centering
  \includegraphics[width=0.485\textwidth]{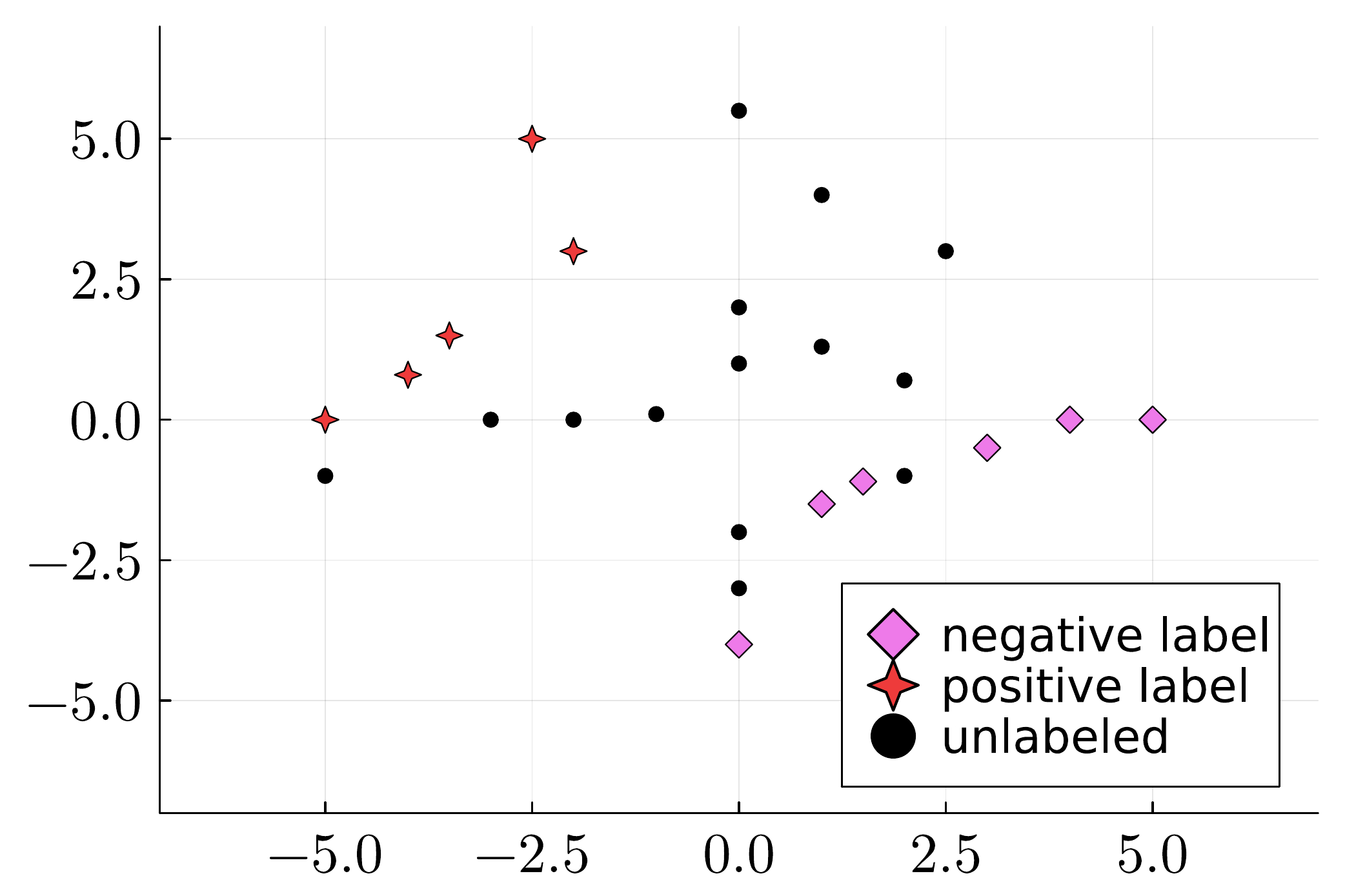}\quad%
  \includegraphics[width=0.485\textwidth]{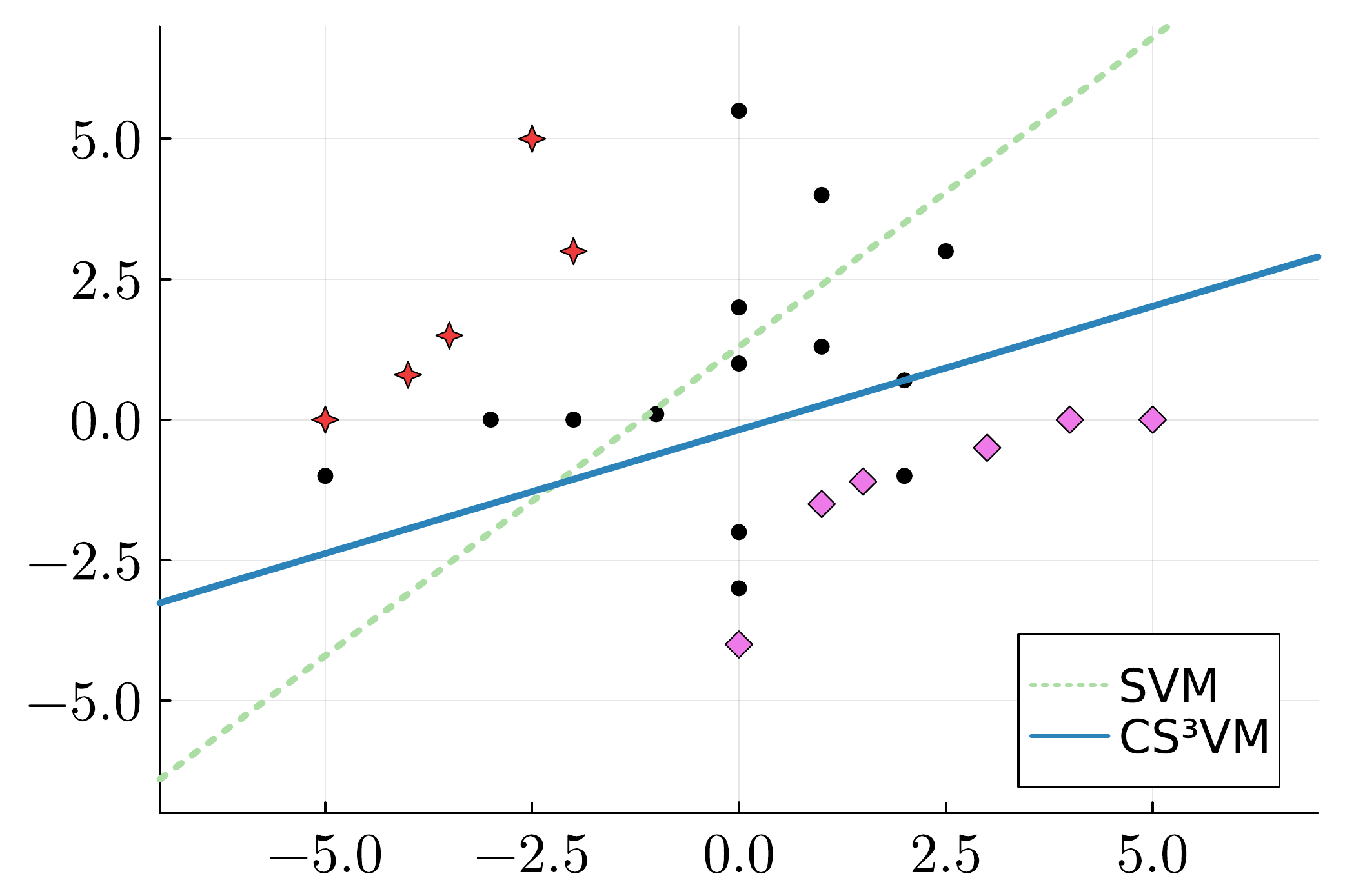}
  \caption{\rev{A 2-dimensional example (left) and the hyperplanes
      resulting from the SVM and the CS$^3$VM (right).}}
  \label{fig:draws}
\end{figure}

In the big-$M$ formulation, the choice of $M$ is crucial.
If $M$ is too small, the problem can become infeasible or optimal
solutions could be cut off.
If $M$ is chosen too large, the respective continuous relaxations
usually lead to bad lower bounds and solvers may encounter numerical
troubles.
The choice of~$M$ is discussed in the following lemma and theorem.
In Lemma~\ref{theob} we show how~$M$ is related to the objective
function and the given data.
This is then used in Theorem~\ref{theob2} to derive a provably correct
big-$M$.
\begin{lem}
  \label{theob}
  Given a feasible point for Problem~\eqref{equation2} with an
  objective function value~$f$, an optimal solution $(\omega^*, b^*,
  \xi^*, \eta^*, z^*)$ of \eqref{equation2} satisfies
  \begin{equation*}
    \label{boundomega}
    \Vert \omega^* \Vert \leq \sqrt{2f}
    \quad\text{and}\quad
    \vert b^* \vert \leq  \Vert \omega^* \Vert  \max_{i \in [1,N]} \Vert
    x^i \Vert + 1
  \end{equation*}
  and, consequently, every optimal solution satisfies \eqref{4} and
  \eqref{5} for
  \begin{equation*}
    \label{HOWM}
    M = 2 \sqrt{2f} \max_{i \in [1,N]} \Vert x^i \Vert + 1.
  \end{equation*}
\end{lem}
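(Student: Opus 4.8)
The plan is to establish the two norm bounds separately and then combine them through a Cauchy--Schwarz and triangle-inequality estimate to certify the stated value of $M$.

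For the bound on $\Vert\omega^*\Vert$ I would use only non-negativity of the objective terms together with optimality. Since $(\omega^*,b^*,\xi^*,\eta^*,z^*)$ is optimal and a feasible point of value $f$ is available, the optimal value is at most $f$. Because $\xi_i^*\ge 0$ by \eqref{8}, $\eta_1^*,\eta_2^*\ge 0$ by \eqref{sumeta}, and $C_1,C_2>0$, every summand of \eqref{2} other than $\Vert\omega^*\Vert^2/2$ is non-negative, so $\tfrac12\Vert\omega^*\Vert^2\le f$, i.e.\ $\Vert\omega^*\Vert\le\sqrt{2f}$. This step is immediate.

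For the bound on $|b^*|$ I would argue by optimality in $b$ with $\omega^*$ held fixed. Regard the remaining objective as a function of $b$ after minimizing out $\xi,\eta,z$: the labeled part contributes $C_1\sum_i\max\{0,1-y_i(\omega^{*\top}x^i+b)\}$, and since the optimal $\eta$ for a given $b$ contributes $C_2\,|{\sum_i z_i}-\tau|$ with $z_i$ recording the sign of $\omega^{*\top}x^i+b$, the unlabeled part is piecewise constant with jumps only at $b=-\omega^{*\top}x^i$. Hence this one-dimensional objective is piecewise affine, with breakpoints exactly at the labeled values $b=y_i-\omega^{*\top}x^i$ (where a margin constraint in \eqref{3} becomes active with $\xi_i=0$) and at the unlabeled values $b=-\omega^{*\top}x^i$. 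I would then argue that an optimal $b$ can be taken at such a breakpoint, and at either type Cauchy--Schwarz gives $|b|\le|y_i|+|\omega^{*\top}x^i|\le 1+\Vert\omega^*\Vert\Vert x^i\Vert\le\Vert\omega^*\Vert\max_{i\in[1,N]}\Vert x^i\Vert+1$. Equivalently, one can run a direct exchange argument: if $b^*$ exceeded the bound, then every positively labeled constraint in \eqref{3} would hold strictly with $\xi_i^*=0$ and every unlabeled point would lie strictly on the positive side, so decreasing $b$ slightly would leave the unlabeled contribution unchanged while not increasing the slack term $C_1\sum_i\xi_i$---and strictly decreasing it as soon as a negatively labeled point is present---contradicting optimality.

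The main obstacle is precisely this $b$-bound in the degenerate configurations where all labeled points lie on one side. There the labeled part alone does not pin down $b$, and one must invoke the cardinality term via \eqref{6}: the minimizer is then driven to an unlabeled breakpoint $b=-\omega^{*\top}x^i$ (forcing $\sum_i z_i=\tau$), which still satisfies the bound. Care is needed to ensure that such a finite minimizer exists, which is where the natural assumption $0\le\tau\le m$ enters; I would isolate this case and treat it explicitly rather than fold it into the generic argument. Finally, to assemble $M$, both bounds and the triangle inequality give, for every $i\in[1,N]$, $|\omega^{*\top}x^i+b^*|\le\Vert\omega^*\Vert\Vert x^i\Vert+|b^*|\le 2\Vert\omega^*\Vert\max_{i\in[1,N]}\Vert x^i\Vert+1\le 2\sqrt{2f}\max_{i\in[1,N]}\Vert x^i\Vert+1=M$; since $|\omega^{*\top}x^i+b^*|\le M$, the relaxed inequality in each of \eqref{4} and \eqref{5} is slack (the bound $\omega^{*\top}x^i+b^*\le M$ when $z_i^*=1$, and $\omega^{*\top}x^i+b^*\ge-M$ when $z_i^*=0$), so this choice of $M$ does not cut off the optimal solution, as claimed.
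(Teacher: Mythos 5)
Your proposal is correct and follows essentially the same route as the paper: the $\Vert\omega^*\Vert$ bound from non-negativity of the penalty terms, the $\vert b^*\vert$ bound via the exchange argument that an over-large $b$ can be decreased to $\Vert\omega^*\Vert\max_{i}\Vert x^i\Vert+1$ without increasing the objective (the paper phrases this as a contradiction, perturbing by $\delta>0$), and the final Cauchy--Schwarz assembly of $M$. Your explicit isolation of the degenerate case with no negatively labeled points is a genuine refinement rather than a deviation: there the paper's strict inequality $\bar f<\tilde f$ collapses to equality because the sum over $i$ with $y_i=-1$ is empty, so the contradiction argument as written only establishes that \emph{some} optimal solution obeys the $b$-bound---which still suffices to certify the stated $M$ as not cutting off the optimal value, but is weaker than the paper's phrasing suggests.
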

\begin{proof}
  Due to optimality, we get
  \begin{equation*}
    \frac{\Vert \omega^* \Vert^2}{2} \leq   \frac{\Vert
      \omega^* \Vert^2}{2} + C_1  \sum_{i = 1}^n
    \xi^*_i + C_2(\eta^*_1 + \eta^*_2)\leq f
    \implies \Vert \omega^* \Vert \leq \sqrt{2f}.
  \end{equation*}
  The second inequality is shown by contradiction.
  To this end, we w.l.o.g.\ assume that $\tilde{b} =
  \Vert \omega^* \Vert  \max_{i \in [1,N]} \Vert x^i \Vert +1 +
  \delta$ is part of an optimal solution for some $\delta > 0$.
  Using the inequality of Cauchy--Schwarz then yields
  \begin{align*}
    (\omega^{*})^\top x^i + \tilde{b}
    & =  (\omega^{*})^\top x^i + \Vert \omega^* \Vert  \max_{j \in
      [1,N]} \Vert x^j \Vert  +1 + \delta
    \\
    & \geq  - \Vert \omega^* \Vert \Vert x^i \Vert + \Vert \omega^*
      \Vert \max_{j  \in [1,N]} \Vert x^j \Vert +1 + \delta
    \\
    & >  1
  \end{align*}
  for all $i \in [1,N]$.
  Hence, for all $i \in [1,n]$ with $y_i = 1$, we get $\tilde{\xi}_i =
  0$ from Constraint~\eqref{3} and the objective function.
  Moreover, for $i \in [1,n]$ with $y_i = -1$, the same reasoning
  implies
  \begin{equation*}
    - (\omega^{*})^\top x^i - \tilde{b}  = 1 - \tilde{\xi}_i \implies
    \tilde{\xi}_i = 2 + (\omega^{*})^\top x^i +  \Vert \omega^* \Vert
    \max_{j  \in [1,N]} \Vert x^j \Vert + \delta.
  \end{equation*}
  Besides that, for the unlabeled data $i\in [n+1,N] $, since $
  (\omega^{*})^\top x^i + \tilde{b} >1$, we get $\tilde{z}_i = 1,$
  which leads to
  \begin{equation*}
    \sum_{i=n+1}^N \tilde{z}_i = m \implies
    \tilde{\eta}_1 = 0, \ \tilde{\eta}_2 = m -\tau.
  \end{equation*}
  This means that the objective function value for the point
  $(\omega^*,\tilde{b}, \tilde{\xi}, \tilde{\eta}, \tilde{z})$ is given by
  \begin{equation*}
    \tilde{f} \define \frac{\Vert \omega^* \Vert^2}{2} + C_1
    \sum_{i : y_i =-1} \left(2 + (\omega^{*})^\top x^i +
      \Vert \omega^* \Vert \max_{j  \in [1,N]} \Vert x^j \Vert + \delta \right)
    + C_2( m -\tau).
  \end{equation*}
  However, if we set  $\bar{b} \define \Vert \omega^* \Vert \max_{i  \in
    [1,N]} \Vert x^i \Vert +1, $ we get
  \begin{equation*}
    (\omega^{*})^\top x^i + \bar{b} \geq 1, \quad i \in [1,N],
  \end{equation*}
  i.e., $z_i = 1$ for all $i \in [n+1,N] $, $\bar{\eta}_1 = 0,$
  $\bar{\eta}_2 = m -\tau$,  and $\bar{\xi}_i = 0 $ for $i$ with $y_i
  = 1$.
  Moreover, for $i \in [1,n]$ with $y_i = -1$, from Constraint
  \eqref{3} we obtain
  \begin{equation*}
    - (\omega^{*})^\top x^i - \tilde{b}  = 1-\bar{\xi_i} \implies
    \bar{\xi_i} = 2 + (\omega^{*})^\top x^i +  \Vert \omega^* \Vert
    \max_{i  \in [1,N]} \Vert x^i \Vert.
  \end{equation*}
  All this implies that the objective function value $\bar{f}$ for the
  point  $(\omega^*,\bar{b}, \bar{\xi}, \bar{\eta}, \bar{z})$
  satisfies
  \begin{equation*}
    \bar{f} \define  \frac{\Vert \omega^* \Vert^2}{2} + C_1
    \sum_{i : y_i =-1}(2 + (\omega^{*})^\top x^i +
    \Vert \omega^* \Vert \max_{j  \in [1,N]} \Vert x^j \Vert ) \ +
    C_2( m -\tau) < \tilde{f},
  \end{equation*}
  which contradicts the assumption that $\tilde{f}$ is optimal.
  Hence,
  \begin{equation*}
    \vert b^* \vert
    \leq \Vert \omega^* \Vert \max_{i  \in [1,N]} \Vert x^i \Vert +1
  \end{equation*}
  holds, which proves the second inequality.
  Note further that
  \begin{equation*}
    (\omega^{*})^\top x^i + b^* \leq \Vert\omega^* \Vert
    \Vert x^i \Vert + \vert b^* \vert \leq  2\sqrt{2f} \max_{j
      \in [1,N]} \Vert x^j \Vert + 1 = M
  \end{equation*}
  and
  \begin{equation*}
    (\omega^{*})^\top x^i + b^* \geq - \Vert\omega^* \Vert
    \Vert x^i \Vert - \vert b^* \vert \geq  - 2\sqrt{2f}
    \max_{j \in [1,N]} \Vert x^j \Vert - 1 = -M
  \end{equation*}
  holds for all $i \in [n+1,N]$.
\end{proof}

We now use the result from the last technical lemma to obtain a
provably correct big-$M$.

\begin{thm}
  \label{theob2}
  A valid big-$M$ for Problem~\eqref{equation2} is given by
  \begin{equation}
    \label{validM}
    M = 2\sqrt{2(2C_1\bar{n} + C_2(m-\tau))}\max_{i \in [1,N]}\Vert x^
    i \Vert +1
  \end{equation}
  with $\bar{n} \define \vert \defset{i \in [1,n]}{ y_i = -1}\vert$.
\end{thm}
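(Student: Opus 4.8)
The plan is to invoke Lemma~\ref{theob}, which guarantees that \emph{any} feasible point of Problem~\eqref{equation2} with objective function value~$f$ certifies the validity of the big-$M$ value $2\sqrt{2f}\max_{i\in[1,N]}\Vert x^i\Vert+1$. Comparing this expression with the claimed formula~\eqref{validM}, it suffices to exhibit a single concrete feasible point whose objective value equals $2C_1\bar{n}+C_2(m-\tau)$; plugging this $f$ into the lemma then reproduces~\eqref{validM} verbatim.

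First I would construct such a point by flattening the hyperplane. Setting $\omega=0$ annihilates the quadratic term, and choosing $b=1$ places every data point strictly on the positive side, since $\omega^\top x^i + b = 1 > 0$ for all $i$. This forces $z_i = 1$ for all $i \in [n+1,N]$ in order to satisfy Constraints~\eqref{4} and~\eqref{5} (the former needs only $M\geq 1$, which is automatic), whence $\sum_{i=n+1}^N z_i = m$. For the labeled part, Constraint~\eqref{3} reduces to $y_i \geq 1 - \xi_i$: the points with $y_i = 1$ permit $\xi_i = 0$, while the points with $y_i = -1$ require $\xi_i \geq 2$, so I set $\xi_i = 2$ for exactly the $\bar{n}$ negatively labeled indices.

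Next I would fix the cardinality slacks. Since $\sum_{i=n+1}^N z_i = m \geq \tau$, Constraint~\eqref{6} is met by taking $\eta_1 = 0$ and $\eta_2 = m - \tau$, and Constraints~\eqref{8}--\eqref{10} hold trivially. A direct evaluation of the objective~\eqref{2} at this point then gives
\[
  \frac{\Vert 0\Vert^2}{2} + C_1\sum_{i:\,y_i=-1} 2 + C_2\bigl(0 + (m-\tau)\bigr)
  = 2C_1\bar{n} + C_2(m-\tau) \enifed f .
\]

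Finally, applying Lemma~\ref{theob} with this value of $f$ yields exactly the big-$M$ in~\eqref{validM}, completing the argument. The only genuine idea is the choice of the degenerate feasible point $\omega = 0$, $b = 1$, which simultaneously zeroes the margin term and pushes all unlabeled points into the positive class; once it is in hand, every step is a routine constraint check. I therefore do not expect a real obstacle, the only point requiring a modicum of care being that the cardinality constraint~\eqref{6} is satisfiable at this point, which implicitly uses the natural hypothesis $\tau \leq m$.
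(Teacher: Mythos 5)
Your proposal is correct and follows the paper's own proof essentially verbatim: the same degenerate feasible point $\omega=0$, $b=1$ yielding objective value $2C_1\bar{n}+C_2(m-\tau)$, followed by an application of Lemma~\ref{theob}. Your added remarks (that $M\geq 1$ suffices for Constraint~\eqref{4} and that $\tau\leq m$ is implicitly used) are minor refinements the paper leaves tacit, not a different route.
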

\begin{proof}
  Consider the feasible point of \eqref{equation2} given by $\omega = 0 \in \R^d$
  and $b=1$.
  Since \mbox{$\omega^\top x^i + b = 1$} holds for all $i\in[1,N]$,
  Constraint~\eqref{3} implies
  \begin{equation*}
    \xi_i =
    \begin{cases}
      2, & \text{if } y_i = -1 ,\\
      0, & \text{otherwise}.
    \end{cases}
  \end{equation*}
 Moreover, using Constraints~\eqref{4}--\eqref{6} leads to
  \begin{equation*}
    z_i = 1, \ i \in [n+1,N], \quad \eta_1 = 0, \quad \eta_2 = m - \tau,
  \end{equation*}
  which implies that the objective function for the point
  $(\omega, b, \xi, \eta, z)$ is given by
  \begin{equation*}
    f = 0 + 2C_1\bar{n} + C_2(m-\tau).
  \end{equation*}
  Finally, from Lemma~\ref{theob}, we get
  \begin{equation*}
    M = 2\sqrt{2(2C_1\bar{n} + C_2(m-\tau))}\max_{i \in [1,N]}\Vert x^
    i \Vert + 1. \qedhere
  \end{equation*}
\end{proof}


\section{A Re-Clustering Method for solving CS$^3$VM}
\label{sec:cluster-unlabeled-points}

In Model~\eqref{equation2} of the last section, each binary variable
is related to an unlabeled point. The larger the number of unlabeled
data, the larger the number of binary variables and, hence, the
larger the computational burden to solve Problem~\eqref{equation2}.
To reduce this computational burden, we propose to cluster the
unlabeled data. This way, only one binary variable per cluster is
needed. For every cluster, we use its centroid as its representative
point.
To obtain clusterings, we use minimum sum-of-squares
clustering (MSSC). The MSSC problem is NP-hard; see, e.g.,
\textcite{KNP,KNP2,KNP3}. However, we do not need a globally
optimal solution for the MSSC problem as will be shown below.
Given a number~$k$ of clusters and a matrix $S =[s^1, \dotsc, s^p] \in
\R^{d\times p}$ of given points, the goal of the MSSC is to find
mean vectors $c^j \in \R^d$, $j \in [1, k]$, that solve the problem
\begin{equation*}
  c^* = \argmin_c \ \ell(S,c), \quad c = (c^j)_{j=1,\dots,k},
\end{equation*}
where the loss function $\ell$ is the sum of the squared Euclidean
distances, i.e.,
\begin{equation*}
  \ell(S,c) = \sum_{j=1}^k \sum_{s^i \in \mathcal{C}_j} \Vert s^i - c^j \Vert^2
\end{equation*}
with $\mathcal{C}_j \subset \R^{d}$ being the set of data points that
are assigned to cluster $j$.

We solve this problem heuristically using the $k$-means
algorithm \parencite{Kmeans1a, Kmeans3} for $S = X_u$, i.e.,
we cluster the unlabeled data.
Then, instead of using all unlabeled data as in the last section, we
only use the clusters' centroids $c^1, \dotsc, c^k$ and the numbers
$e_1, \dots, e_k$ of data points in each cluster to obtain the problem
\begin{varsubequations}{P4}
  \label{equation3}
  \begin{align}
    \min_{\omega,b,\xi,\eta, z} \quad
    & \frac{\Vert\omega \Vert ^ 2  }{2} + C_1 \sum_{i=1}^n
      \xi_i +C_2 (\eta_1 + \eta_2)
      \label{svmfunction3} \\
    \st \quad
    & y_i (\omega^\top x^i+b)  \geq 1 - \xi_i,  \quad i \in [1,n],
      \label{constraintx} \\
    & \omega^\top c^j +b \leq z_j M,  \quad   j\in [1,k],
      \label{constraintc1}\\
    & \omega^\top c^j +b \geq -(1-z_j) M, \quad  j \in [1,k],
      \label{constraintc2}\\
    & \tau - \eta_1 \leq   \sum_{j=1}^k e_jz_j \leq \tau + \eta_2,
      \label{constraints1} \\
    & \xi_i   \geq 0, \quad i \in [1, n],
      \label{constraintxi} \\
    & \eta_1, \eta_2 \geq 0,
      \label{constrainteta} \\
    & z_j \in \{0,1\}, \quad j \in [1,k].
      \label{constraintz}
  \end{align}
\end{varsubequations}
A valid big-$M$ is still given by~\eqref{validM} as shown in the next
proposition.
\begin{prop}
  \label{lemma3}
  If $e_j \geq 1$ for all $j \in [1,k]$, a valid big-$M$ for
  Problem~\eqref{equation3} is given by \eqref{validM}.
\end{prop}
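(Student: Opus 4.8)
The plan is to mirror the two-step argument behind Theorem~\ref{theob2}: first I would establish a clustered analogue of Lemma~\ref{theob}, and then I would evaluate it at the same trivial feasible point $(\omega,b)=(0,1)$ to recover the explicit value in~\eqref{validM}. The only genuinely new ingredient is that every centroid inherits the norm bound of the data points it averages.

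First I would show that $\Vert c^j \Vert \leq \max_{i \in [1,N]} \Vert x^i \Vert$ holds for every $j \in [1,k]$. Since $e_j \geq 1$, the cluster $\mathcal{C}_j$ is non-empty and its centroid is the genuine average $c^j = \frac{1}{e_j} \sum_{x^i \in \mathcal{C}_j} x^i$, so the triangle inequality gives $\Vert c^j \Vert \leq \frac{1}{e_j} \sum_{x^i \in \mathcal{C}_j} \Vert x^i \Vert \leq \max_{i \in [1,N]} \Vert x^i \Vert$. This is exactly where the hypothesis $e_j \geq 1$ enters: an empty cluster would leave the centroid undefined, and the bound could fail.

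Next I would re-run the proof of Lemma~\ref{theob} almost verbatim, replacing the unlabeled points $x^i$, $i \in [n+1,N]$, by the centroids $c^j$, $j \in [1,k]$, and the cardinality sum $\sum_{i=n+1}^N z_i$ by $\sum_{j=1}^k e_j z_j$. The bound $\Vert \omega^* \Vert \leq \sqrt{2f}$ is unchanged because the objective is identical. For the bound on $\vert b^* \vert$, the contradiction argument relies only on the labeled constraints~\eqref{constraintx} (which coincide with~\eqref{3}) together with the fact that a sufficiently large $b$ forces every $z_j = 1$; here the centroid bound guarantees $\omega^\top c^j + \tilde{b} > 1$ just as before, whence $\sum_{j=1}^k e_j z_j = \sum_{j=1}^k e_j = m$ and therefore $\eta_1 = 0$, $\eta_2 = m - \tau$, matching the original computation. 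The closing Cauchy--Schwarz chain then yields $\vert \omega^\top c^j + b \vert \leq \Vert \omega^* \Vert \Vert c^j \Vert + \vert b^* \vert \leq 2\sqrt{2f}\max_{i \in [1,N]}\Vert x^i\Vert + 1 = M$ for all $j \in [1,k]$, again using $\Vert c^j \Vert \leq \max_i \Vert x^i \Vert$.

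Finally I would insert the feasible point $\omega = 0$, $b = 1$, as in Theorem~\ref{theob2}. Because $\sum_{j=1}^k e_j = m$, this point forces $z_j = 1$ for all $j$, $\eta_1 = 0$, $\eta_2 = m - \tau$, and $\xi_i = 2$ for the $\bar{n}$ labeled points with $y_i = -1$, so its objective value is once more $f = 2C_1\bar{n} + C_2(m-\tau)$; substituting this $f$ into the clustered version of Lemma~\ref{theob} reproduces~\eqref{validM}. I do not expect a substantive obstacle here: apart from the centroid norm bound, everything is a mechanical transcription, and the only two facts worth double-checking are that the clusters partition the unlabeled data so that $\sum_j e_j = m$, and that clustering leaves the labeled part of the objective untouched.
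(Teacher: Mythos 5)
Your proposal is correct and follows essentially the same route as the paper: the paper's proof is exactly the observation that $\Vert c^j \Vert = \frac{1}{e_j}\bigl\Vert \sum_{i : x^i \in \mathcal{C}_j} x^i \bigr\Vert \leq \max_{i}\Vert x^i\Vert$ (using $e_j \geq 1$), after which the arguments of Lemma~\ref{theob} and Theorem~\ref{theob2} carry over verbatim with centroids in place of unlabeled points and $\sum_j e_j z_j = m$ in place of $\sum_i z_i = m$. You have simply written out the transcription that the paper leaves implicit.
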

\begin{proof}
  The proof follows the same lines as the proofs of Lemma~\ref{theob}
  and Theorem~\ref{theob2} with the additional observation that for all
  $j \in [1,k]$, it holds
  \begin{equation*}
    \Vert c^j \Vert = \frac{ 1 }{e_j} \left\Vert  \sum_{i : x^i\in
        \mathcal{C}_j}x^i\right \Vert \leq \frac{e_j \max_{i \in
        [n+1,N]}\Vert x^i \Vert}{e_j}
    = \max_{i \in [n+1,N]}\Vert x^i \Vert.
    \qedhere
  \end{equation*}
\end{proof}

It can happen that the hyperplane given by $(\omega^*,b^*)$ that
results from the solution of Problem~\eqref{equation3} cuts through
some cluster.
This means that not all data points of the cluster actually lie on the
same side of the hyperplane.
If this happens, the solution of Problem~\eqref{equation3}
does not satisfy the cardinality constraint \eqref{6} of
Problem~\eqref{equation2}. To fix this, we propose an iterative
method that is formally  listed  in Algorithm~\ref{First version}.
\rev{Note that the use the $k$-means algorithm is helpful here as it
  automatically provides the convex hulls of the clusters.
  Hence, it is easy to check if the hyperplane cuts through some
  cluster or not.}

\begin{algorithm}
  \caption{Re-Clustering Method (RCM)}
  \label{First version}
  \SetKwInput{Input}{Input~}
  \Input{$X\in \mathbb{R}^{d\times N}, y \in \set{-1,1}^n, k^1 \in
    \mathbb{N}, C_1 >0, C_2 > 0$, and $\tau \in \mathbb{N}$.}
  Set $t \ot 1$, compute $M^t$ as in~\eqref{validM}, compute a
  clustering of~$X_u$ in $k^1$ many clusters using the $k$-means
  algorithm, and obtain the centroids $c^1, \dotsc, c^{k^1}$ as well as the
  numbers $e_1, \dotsc, e_{k^1}$ of data points in each cluster.\label{first-version:cluster}
  \\
  Solve Problem~\eqref{equation3} to compute the hyperplane
  $(\omega^t,b^t)$ as well as $\xi^t,\eta^t,$ $z^t$.
  \label{first-version:solve-problem}
  \\
  \uIf{the hyperplane $(\omega^t,b^t)$ cuts a cluster}{
    Set $k^{t+1} \ot k^t$. \\
    \For{each cluster that is cut by the hyperplane $(\omega^t,b^t)$}{
      Split the cluster into two new clusters so that neither of the
      two new clusters is cut by the hyperplane~$(\omega^t,b^t)$.
      \label{First version:split-step}
      \\
      Update the centroids of the newly created clusters.
      \\
      Set $k^{t+1} \ot k^{t+1} + 1$.
    }
    Update $t\ot t +1 $ and go to Step~\ref{first-version:solve-problem}.
  }
  \Else{
    Return the hyperplane~$(\omega^t,b^t)$ as well as $\xi^t,\eta^t,$ $z^t$.
  }
\end{algorithm}

If Algorithm~\ref{First version} terminates it holds that all points
in a cluster are on the same side of the final hyperplane.
This implies the cardinality constraint~\eqref{6} is satisfied.
Note that the $k$-means algorithm is only called once to initialize
the clustering.
For all other iterations, we manually split clusters if they are cut
by the hyperplane of the respective iteration and compute the new
centroids directly.

The next theorem establishes that Algorithm~\ref{First version} always
terminates after finitely many iterations.

\begin{thm}
  \label{finite}
  Suppose that $e_j \geq 1$ for all $j\in[1,k^1]$ after
  Step~\ref{first-version:cluster} of Algorithm~\ref{First
    version}. Then, Algorithm~\ref{First version} terminates after at
  most $m-k^1$ iterations, where $m$ is the number of the unlabeled data
  points and $k^1$ is the number of initial clusters.
\end{thm}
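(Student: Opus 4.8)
The plan is to prove termination by a counting argument on the number of clusters, exploiting that the clusters always partition the $m$ unlabeled points. The relevant quantity is thus the number $k^t$ of clusters maintained in iteration~$t$. First I would record the invariant that every cluster contains at least one unlabeled point in each iteration. This holds after Step~\ref{first-version:cluster} by hypothesis, and it is preserved by the splitting in Step~\ref{First version:split-step}: a cluster is split only when the hyperplane $(\omega^t,b^t)$ cuts it, i.e., when not all of its points lie on the same side; splitting it along the hyperplane (positive side versus non-positive side) then yields two \emph{nonempty} clusters, neither of which is cut. As a by-product, the invariant keeps $e_j\geq 1$ for every cluster, so Proposition~\ref{lemma3} applies and each subproblem~\eqref{equation3} uses a valid big-$M$.

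From the invariant I would draw two consequences. On the one hand, $k^t$ nonempty clusters partitioning $m$ points give the cap $k^t\leq m$ for all~$t$. On the other hand, the cluster count strictly increases along every non-terminating iteration: if $(\omega^t,b^t)$ cuts at least one cluster, that cluster is replaced by two nonempty clusters, so $k^{t+1}\geq k^t+1$.

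Combining these two facts closes the proof. The sequence $(k^t)$ starts at $k^1$, increases by at least one whenever the algorithm does not terminate, and is bounded above by~$m$; hence the algorithm can perform at most $m-k^1$ non-terminating (re-clustering) iterations. Moreover, once $k^t=m$ every cluster is a singleton, and a singleton can never be cut, since its single point lies on exactly one side of the hyperplane; so reaching the cap itself forces termination. This yields the stated bound of at most $m-k^1$ iterations. The argument has no hard analytic core---it is monotonicity together with a pigeonhole count---so the only points deserving care, and thus the main things to pin down, are exactly that each split produces two nonempty clusters (so that $k^t$ strictly increases) and that a singleton can never be cut (so that reaching the cap forces termination); both follow immediately from what it means for the hyperplane to cut a cluster.
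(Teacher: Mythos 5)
Your proposal is correct and follows essentially the same argument as the paper: the cluster count starts at $k^1$, strictly increases by at least one in every non-terminating iteration because each cut cluster is split, and is bounded above by $m$, giving at most $m-k^1$ iterations. Your additional observations (nonemptiness of the split clusters and that singletons cannot be cut) are just a more careful spelling-out of the same pigeonhole count.
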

\begin{proof}
  Observe that since we cluster $m$ unlabeled points, the maximum
  number of clusters we can obtain is $m$. Besides that, if in an
  iteration~$t$, Algorithm~\ref{First version} does not terminate, at
  least one cluster is split Step~\ref{First version:split-step}.
  Because we start with $k^1$ clusters and since in each iteration, we
  increase the number of clusters at least by one, the maximum
  number of iterations is $m-k^1$.
\end{proof}

Note that the point obtained by Algorithm~\ref{First version} is not
necessarily a minimizer of Problem~\eqref{equation2}.
However, the objective function value of the point obtained by
Algorithm~\ref{First version} is an upper bound for the objective
function value of Problem~\eqref{equation2}.
\begin{thm}
  \label{theoupper}
  Let $(\bar{\omega}, \bar{b}, \bar{\xi}, \bar{\eta}, \bar{z})$ be the
  point returned by Algorithm~\ref{First version}.
  Then, \rev{$(\bar{\omega}, \bar{b}, \bar{\xi}, \bar{\eta}, \bar{z})$
    is feasible for} Problem~\eqref{equation2} with
  \begin{equation*}
    M = 2\sqrt{2\bar{f}}\max_{i~\in[1,N]} \Vert x^i \Vert~+~1
  \end{equation*}
  \rev{and, consequently,
    \begin{equation*}
      \bar{f} \define \frac{\Vert\bar{\omega} \Vert ^ 2
      }{2} + C_1\sum_{i=1}^n\bar{ \xi_i} +C_2 (\bar{\eta}_1 +
      \bar{\eta}_2)
    \end{equation*}
    is an upper bound of Problem~\eqref{equation2}.
  }
\end{thm}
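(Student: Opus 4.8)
The plan is to show that the point returned by Algorithm~\ref{First version} satisfies every constraint of Problem~\eqref{equation2}, and that the stated $M$ is a valid big-$M$ for this feasible point. The crucial structural fact, guaranteed by the termination condition of the algorithm, is that upon termination the final hyperplane $(\bar\omega,\bar b)$ does not cut any cluster. This means that for each cluster $\mathcal{C}_j$, all of its constituent unlabeled points $x^i$ lie strictly on the same side of the hyperplane as the centroid $c^j$ (or on it), so the binary variable $\bar z_j$ assigned to the centroid correctly describes \emph{every} point in that cluster.

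First I would record that $(\bar\omega,\bar b,\bar\xi,\bar\eta,\bar z)$ solves Problem~\eqref{equation3} in the final iteration, so Constraints~\eqref{constraintx}, \eqref{constraintxi}, and \eqref{constrainteta} immediately give the corresponding Constraints~\eqref{3}, \eqref{8}, and \eqref{sumeta} of Problem~\eqref{equation2}, since the labeled part and the slack-nonnegativity conditions are identical in both problems. The only genuine work is to transfer the centroid-based constraints \eqref{constraintc1}--\eqref{constraints1} to the pointwise constraints \eqref{4}--\eqref{6}. To do this, I would define, for each original unlabeled point $x^i$ belonging to cluster $\mathcal{C}_j$, its binary value by $\bar z_i \define \bar z_j$. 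Because the hyperplane does not cut $\mathcal{C}_j$, every $x^i\in\mathcal{C}_j$ satisfies $\bar\omega^\top x^i+\bar b\le 0$ when $\bar z_j=0$ and $\bar\omega^\top x^i+\bar b\ge 0$ when $\bar z_j=1$; combined with the big-$M$ bound $M=2\sqrt{2\bar f}\max_i\Vert x^i\Vert+1$, which dominates $|\bar\omega^\top x^i+\bar b|$ exactly as in Lemma~\ref{theob}, this yields Constraints~\eqref{4} and~\eqref{5} for each individual point.

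Next I would verify the cardinality constraint \eqref{6}. Since each cluster contributes $e_j$ points all sharing the value $\bar z_j$, summing over the induced pointwise assignment gives $\sum_{i=n+1}^N \bar z_i=\sum_{j=1}^k e_j\bar z_j$, so \eqref{constraints1} translates verbatim into $\tau-\bar\eta_1\le\sum_{i=n+1}^N\bar z_i\le\tau+\bar\eta_2$, which is exactly \eqref{6}. Finally, the validity of the specific value $M=2\sqrt{2\bar f}\max_i\Vert x^i\Vert+1$ follows from Lemma~\ref{theob} applied with the feasible point $(\bar\omega,\bar b,\bar\xi,\bar\eta,\bar z)$ of objective value $\bar f$: the lemma's bounds $\Vert\omega\Vert\le\sqrt{2f}$ and the Cauchy--Schwarz estimate on $|\bar\omega^\top x^i+\bar b|$ certify that this $M$ does not cut off the point. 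Having exhibited a feasible point of Problem~\eqref{equation2} with objective value $\bar f$, and noting that both problems share the identical objective function, it follows immediately that $\bar f$ is an upper bound on the optimal value of Problem~\eqref{equation2}.

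**The main obstacle** is the careful bookkeeping of the lifting from centroid variables to pointwise variables: one must argue rigorously that ``the hyperplane does not cut the cluster'' implies the \emph{strict} sidedness needed so that a single $\bar z_j$ simultaneously satisfies \eqref{4} and \eqml{5} for all points in the cluster. The subtlety lies in the boundary case where a point lies exactly on the hyperplane; here one appeals to the observation already made in the discussion of Problem~\eqref{equation2} that a point on the hyperplane may be counted on either side, so assigning it the cluster's common $\bar z_j$ value remains consistent with both inequalities. Everything else is a direct reuse of Lemma~\ref{theob}.
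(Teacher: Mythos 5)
Your proposal is correct and follows essentially the same route as the paper's proof: lifting the per-cluster binaries $\bar z_j$ to pointwise variables $\tilde z_i=\bar z_j$, using the termination condition (no cluster is cut) to verify Constraints~\eqref{4} and~\eqref{5}, summing $e_j\bar z_j$ to recover the cardinality constraint~\eqref{6}, and invoking the bounds from Lemma~\ref{theob} to certify the stated $M$. Your explicit treatment of the boundary case where a point lies exactly on the hyperplane is a slightly more careful rendering of a step the paper passes over quickly, but the argument is the same.
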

\begin{proof}
  For all clusters $\mathcal{C}_j$, $j\in \set{1, \dotsc, k^t}$, where $t$
  is the final iteration of Algorithm~\ref{First version}, we set
  $\tilde{z}_i = \bar{z}_j$ for all $i$ with $x^i \in
  \mathcal{C}_j$. We now show that $(\bar{\omega}, \bar{b}, \bar{\xi},
  \bar{\eta}, \tilde{z})$ is a feasible point for
  Problem~\eqref{equation2}. Indeed, Constraints~\eqref{3}, \eqref{8},
  \eqref{sumeta}, and~\eqref{10} are clearly fulfilled.
  Furthermore, since
  \begin{equation*}
    \sum_{i \in \mathcal{C}_j} \tilde{z}_i = e_j\bar{z}_j
  \end{equation*}
  for all $j \in [1, k^t]$, using \eqref{constraints1} we get
  \begin{equation*}
    \sum_{i=n+1}^N \tilde{z}_i = \sum_{j=1}^{k^{t}} e_j
    \bar{z}_j \implies  \tau - \bar{\eta}_1 \leq
    \sum_{i=n+1}^N \tilde{z}_i \leq \tau + \bar{\eta}_2
  \end{equation*}
  and Constraint~\eqref{6} is satisfied.
  Besides that,
  \begin{equation}
    \label{normw}
    \frac{\Vert \bar{\omega} \Vert^2}{2} \leq \bar{f}
    \implies
    \Vert \bar{\omega} \Vert \leq \sqrt{2\bar{f}}
  \end{equation}
  holds and as in Lemma~\ref{theob}, we get
  \begin{equation}
    \label{normb}
    \vert \bar{b} \vert
    \leq
    \Vert \bar{\omega} \Vert  \max_{i \in [1,N]} \Vert x^i \Vert +1.
  \end{equation}

  Moreover, by construction, for all $i \in \{n+1, \dots N\}$ with
  $\tilde{z}_i = 1$, $x^i$ belongs to a cluster $\mathcal{C}_j$ such
  that $\bar{\omega}^\top c^j +\bar{b}\geq 0 $. Using the fact that
  all points in $\mathcal{C}_j$ are on the same side of the  hyperplane,
  this side must be the positive one. This fact together with
  \eqref{normw} and \eqref{normb} implies
  \begin{align*}
    -(1- \tilde{z}_i) M = 0\leq \bar{\omega}^\top x^i + \bar{b}
    & \leq \Vert \bar{\omega}\Vert \max_{i \in [1,N]}  \Vert x^i \Vert +
      \vert \bar{b} \vert
    \\
    & \leq 2\sqrt{2\bar{f}}\max_{i \in [1,N]}  \Vert x^i \Vert + 1 = M =
      \tilde{z}_i M.
  \end{align*}
  Similarly, for all $i \in \{n+1, \dots N\}$ with   $\tilde{z}_i = 0$, we get
  \begin{equation*}
    -M = -(1-  \tilde{z}_i) M
    \leq
    \bar{\omega}^\top x^i + \bar{b}
    \leq 0
    = \tilde{z}_i M
  \end{equation*}
  and \eqref{4} as well as \eqref{5} are fulfilled.
  Because $(\bar{\omega}, \bar{b}, \bar{\xi}, \bar{\eta}, \tilde{z})$ is
  a feasible point for Problem~\eqref{equation2}, $\bar{f}$ is an
  upper bound to the Problem~\eqref{equation2}.
\end{proof}
\rev{Note, finally, that since the point obtained from
  Algorithm~\ref{First version} is feasible for
  Problem~\eqref{equation2}, we can use it for warm starting.}


\section{Further Algorithmic Enhancements}
\label{Some Improvements}

In order to reduce computational costs, we propose two additional
enhancements.
The first one (see Section~\ref{section41}) makes use of the fact that
the SVM is mostly influenced by data points that are close to the separating
hyperplane.
The second one (see Section~\ref{section42}) introduces a rule for
updating $M$ in each iteration of Algorithm~\ref{First version}.

\subsection{Handling Points far From the Hyperplane}
\label{section41}

In Algorithm~\ref{First version}, the number of clusters increases in
each iteration. Hence, the time to solve Problem~\eqref{equation3}
increases from iteration to iteration in general.
Like in the original SVM, the  points closest to the hyperplane
influence the resulting hyperplane more than the other points.
Obviously, eliminating points that do not strongly influence the
hyperplane decreases the size of the problem. Some approaches
to eliminate these points have also been proposed for the original
SVM. For a survey, see, e.g., \textcite{reducing}. However,
most of these approaches are heuristics and do not necessarily yield a
feasible point of the problem.

The idea for our setting is the following.
Clusters that are far away from the hyperplane could be omitted as
this  will not change the solution.
The farther a cluster is from the hyperplane in an iteration, the
less likely it is that the cluster will be split or change sides
completely in a future iteration.
Hence, the clusters farthest from the current hyperplane mainly
add information about their side and capacity.
However, in a later iteration, the cluster may become relevant again.
Thus, we need to find a way to discard detailed information on certain
clusters but also a way to reactivate the discarded clusters if necessary.

We propose the following procedure to reduce the amount of clusters
that have to be considered in the current iteration of the algorithm.
If the number of clusters exceeds a
fixed value~$k^+$, we first fix the cluster with the centroid farthest
from the hyperplane as a kind of residual cluster on a side if this
side has points far from the hyperplane.
Second, we discard all clusters in which all points are
farther from the hyperplane than some~$\Delta^t$ and assign them to
the residual cluster on their side of the hyperplane. This way the
cardinality constraint remains valid. Moreover, all formerly discarded
clusters are checked for re-consideration. If a discarded cluster has a
point with a distance to the hyperplane less than $\Delta^t$ or if any
point in the cluster changed the side, the cluster is reactivated.

Let $\bar{S} = (s_{\alpha(1)}, \dotsc, s_{\alpha(d)})^\top$ be the vector of
increasingly sorted values of $S = \{s_1, \dots, s_d\}$ and let
$a \in (0,1)$.
The $a$-quantile of $S$, as proposed by \textcite{Quantile},
is given by
\begin{equation*}
  P_S(a) \define s_{\alpha(q)} +  \frac{s_{\alpha(q)} -
    s_{\alpha(r)}}{q - r} \left((d-1)a - q +1 \right)
\end{equation*}
with
\begin{equation*}
  q \define \max_{i \in [1,d]} \left\{i:\frac{i-1}{d-1} \leq a
  \right\},
  \quad r \define \min_{i \in [1,d]} \left\{i:\frac{i-1}{d-1} \geq a
  \right\}.
\end{equation*}
Given a parameter $\hat{\Delta}^t \in (0,1)$, we choose
$\Delta^t$ in each iteration $t$ according to
\begin{equation}
  \label{delta}
  \Delta^t = P_{D^{t}}(\hat{\Delta}^t)
  \quad \text{with} \quad
  D^{t}_j = \left\vert (\omega^t)^\top c_{j} + b^t \right \vert
  \quad \text{for all } j \in \left[1, k^t\right].
\end{equation}
Note that if in an iteration~$t$, a point in some discarded
cluster changed the side, the vector~$z$ as part of the current
solution does not fit to this change.
This happens when, e.g., $(\omega^{t-1})^\top x^i +b^{t-1} > 0$ and
$(\omega^{t})^\top x^i +b^{t} < 0$ but $z^t_j>0$ with $\mathcal{C}_j$
being the cluster with centroid farthest from the hyperplane on the
positive side.
To avoid that this happens too often, $\hat{\Delta}^{t+1}$ is
increased by a fixed value~$\tilde{\Delta} \in (0,1)$
when there is some point in some discarded cluster that has changed
sides.

Motivated by the above discussions, we add new steps in the
Algorithm~\ref{First version} that can be seen in
Algorithm~\ref{Second version}.
In Step \ref{Secondversion:removed}, if the number of clusters
exceeds $k^+$, clusters far from the hyperplane are discarded.
In Steps~\ref{Secondversion:change1} and~\ref{Secondversion:change2},
clusters discarded with a point that changed sides or that
is closer to the hyperplane than $\Delta^t$ are reactivated.
In Step~\ref{Second version:update-delta}, $\hat{\Delta}^t$ is
updated.

\begin{algorithm}
 \caption{Improved Re-Clustering Method (IRCM)}
  \label{Second version}
  \SetAlgoLined
  \SetKwInput{Input}{Input}\SetKwInOut{Output}{Output}
  \Input{$X\in~\mathbb{R}^{d\times N}$, $y \in \set{-1,1}^n$, $k^1 \in
    \mathbb{N}$, $C_1 >0$, $C_2>0$, $\tau\in\mathbb{N}$,
    $\hat{\Delta}^1 \in (0,1)$, $\tilde{\Delta}\in (0,1)$,
    $\mathcal{G}^1 =\emptyset$, $k^+ \in \mathbb{N}$.}
  Set $t=1$, compute $M^t$ as in \eqref{validM}, cluster $X_u$ in
  $k^1$ clusters using $k$-means, leading to centroids  $c^1, \dots,
  c^{k^1}$ and the numbers $e_1,\dots, e_{k^1}$ of  data points in each
  cluster.
  \\
  Solve  Problem~\eqref{equation3} to compute the hyperplane
  $(\omega^t,b^t)$ as well as $\xi^t, \eta^t, z^t$.\label{step2-2}
  \\  Compute $\Delta^t$ as in \eqref{delta}. \\
  \uIf{$k^t> k^+$}{
    update $\mathcal{G}^{t+1} \ot\mathcal{G}^{t} \cup
    \{\mathcal{C}_j:   \vert (\omega^t){^\top }x^\ell + b^t \vert >
    \Delta^t \; \forall  x^\ell \in \mathcal{C}_j \}$.
    \label{Secondversion:removed}
  }
  \Else{
    set $\mathcal{G}^{t+1} \ot \mathcal{G}^{t}$.
  }
  Set $\mathcal{J}^t\define\{\mathcal{C}_j \in \mathcal{G}^{t}:
  \exists x^\ell \in \mathcal{C}_j :  \sign(  (\omega^t)^\top
  x^\ell + b^t ) \neq \sign( (\omega^{t+1})^\top x^\ell + b^{t+1} ) \}.$ \label{Secondversion:change1}
  \\
  Update $\mathcal{G}^{t+1} \ot \mathcal{G}^{t+1}  \backslash
  (\{\mathcal{C}_j \in \mathcal{G}^{t} : \exists  x^\ell \in
  \mathcal{G}^{t}_j \text{ with }  \vert (\omega^t)^\top x^\ell + b^t
  \vert \leq \Delta^t \}\cup \mathcal{J}^t )$. \label{Secondversion:change2}
  \\
  \uIf{$\mathcal{J}^t \neq \emptyset$}{
    update $\hat{\Delta}^{t+1} \ot \min\{\hat{\Delta}^{t}+
    \tilde{\Delta},1\}$.
    \label{Second version:update-delta}
  }
  \Else{
    set $\hat{\Delta}^{t+1} \ot \hat{\Delta}^{t}$
  }
  Compute $ M^{t+1}$ as in \eqref{BigM 2}.\label{secondversion:updateM}
  \\
  \uIf{$\mathcal{J}^t \neq \emptyset$ or the hyperplane $(\omega^t,b^t)$ cuts a
    cluster}{
  Set $k^{t+1} \ot k^t$. \\
    \For{each cluster that is cut by the hyperplane $(\omega^t,b^t)$}{
      Split the cluster into two new clusters so that neither of the
      two new clusters is cut by the hyperplane~$(\omega^t,b^t)$.      \\
      Update the centroids of the newly created clusters.
      \\
      Set $k^{t+1} \ot k^{t+1} + 1$.}
    Update $t\ot t +1 $ and back to Step \ref{step2-2}.
  }
  \Else{
   Return the hyperplane~$(\omega^t,b^t)$ as well as $\xi^t,\eta^t, z^t$.
  }
\end{algorithm}

\subsection{Updating the Big-$M$}\label{section42}
As discussed in Section \ref{sec:miqp-formulation}, $M$ needs to be
sufficiently large.
However, the bigger the $M$, the more likely we face numerical
issues.
As shown in Section~\ref{sec:miqp-formulation}, the smaller the
objective function provided by a feasible point, the smaller
the value of $M$ can be chosen.
Based on that, we update $M$ in each iteration with the aim of
decreasing it.
We do this by adding Step~\ref{secondversion:updateM} in
Algorithm~\ref{Second version} and the next theorem justifies this.

\begin{thm}
  \label{isaupper}
  Consider $X, y, C_1, C_2, \tau$, as well as $c^1,\dots, c^{k^t}$ and
  $e_1, \dots e_{k^t}$ in an iteration~$t$ of Algorithm~\ref{First
    version}.
  Then, the optimal solution $(\bar{\omega}^t,\bar{b}^t, \bar{\xi}^t,
  \bar{\eta}^t, \bar{z}^t)$ of Problem~\eqref{equation3} provides an
  upper bound
  \begin{equation}
    \label{upperf}
    \tilde{f}_t \define \frac{\Vert \bar{\omega}^t \Vert^2}{2} +
    C_1 \sum_{i=1}^n \bar{\xi_i} + C_2(\tilde{\eta}_1
    + \tilde{\eta}_2 ),
  \end{equation}
  with
  \begin{equation}
    \label{ez}
    \tilde{z}_j =
    \begin{cases}
      1, & \text{ if }  \left(\bar{\omega}^{t}\right)^\top \tilde{c}_j +
      \bar{b}^t \geq 0,
      \\
      0, & \text{ otherwise},
    \end{cases}
  \end{equation}
  and
  \begin{equation}
    \label{eeta}
    \tilde{\eta}_1 = \max\left\{0, \tau -  \sum_{j=1}^s
      e_j\tilde{z}_j\right \},
    \quad
    \tilde{\eta}_2 = \max\left\{0,   \sum_{j=1}^s
      e_j\tilde{z}_j - \tau\right \},
  \end{equation}
  for Problem~\eqref{equation3} with $c^1,\dots, c^{k^{t+1}}$ and
  $e_1, \dots e_{k^{t+1}}$ as updated in iteration $t$ with
  \begin{equation}  \label{BigM 2}
    M =  2 \sqrt{2\tilde{f_t}} \max_{i \in [1,N]} \Vert x^i \Vert + 1.
  \end{equation}
\end{thm}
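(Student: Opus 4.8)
The plan is to produce an explicit feasible point for Problem~\eqref{equation3} in the updated clustering $c^1, \dotsc, c^{k^{t+1}}$, $e_1, \dotsc, e_{k^{t+1}}$ with the big-$M$ of \eqref{BigM 2}, whose objective value equals $\tilde{f}_t$; any such feasible point immediately certifies that $\tilde{f}_t$ is an upper bound. The candidate keeps the iteration-$t$ hyperplane $(\bar{\omega}^t, \bar{b}^t)$ and the slacks $\bar{\xi}^t$ and only re-adjusts the cluster variables to the new clustering, taking $\tilde{z}$ as in \eqref{ez} and $\tilde{\eta}_1, \tilde{\eta}_2$ as in \eqref{eeta}.

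First I would dispatch the constraints that do not see the new centroids. Constraints~\eqref{constraintx} and~\eqref{constraintxi} depend only on the labeled data and on $(\bar{\omega}^t, \bar{b}^t, \bar{\xi}^t)$, so they are inherited from the feasibility of the iteration-$t$ solution; \eqref{constrainteta} and \eqref{constraintz} hold by the definitions \eqref{ez} and \eqref{eeta}. For the cardinality constraint~\eqref{constraints1}, the formulas in \eqref{eeta} give $\tau - \tilde{\eta}_1 = \min\{\tau, \sum_j e_j \tilde{z}_j\}$ and $\tau + \tilde{\eta}_2 = \max\{\tau, \sum_j e_j \tilde{z}_j\}$, so $\sum_j e_j \tilde{z}_j$ lies in the required interval by construction.

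The crux is \eqref{constraintc1} and \eqref{constraintc2} for the new centroids, where the old hyperplane is now evaluated at new points. By \eqref{ez}, if $\tilde{z}_j = 1$ then $(\bar{\omega}^t)^\top \tilde{c}_j + \bar{b}^t \geq 0 = -(1-\tilde{z}_j)M$, and if $\tilde{z}_j = 0$ then $(\bar{\omega}^t)^\top \tilde{c}_j + \bar{b}^t < 0 = \tilde{z}_j M$, so in each case one inequality is trivial and only $\vert (\bar{\omega}^t)^\top \tilde{c}_j + \bar{b}^t\vert \leq M$ remains. I would obtain this from three estimates: since the remaining terms of $\tilde{f}_t$ are nonnegative, $\frac{1}{2}\Vert \bar{\omega}^t\Vert^2 \leq \tilde{f}_t$ gives $\Vert \bar{\omega}^t\Vert \leq \sqrt{2\tilde{f}_t}$; since $(\bar{\omega}^t, \bar{b}^t, \bar{\xi}^t, \bar{\eta}^t, \bar{z}^t)$ is optimal for the iteration-$t$ instance, the bound of Lemma~\ref{theob}, which carries over to the clustered problem via Proposition~\ref{lemma3}, yields $\vert \bar{b}^t\vert \leq \Vert \bar{\omega}^t\Vert \max_{i\in[1,N]}\Vert x^i\Vert + 1$; and each new centroid, being an average of a subset of the original unlabeled points, satisfies $\Vert \tilde{c}_j\Vert \leq \max_{i\in[n+1,N]}\Vert x^i\Vert$ exactly as in the proof of Proposition~\ref{lemma3}. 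A Cauchy--Schwarz estimate then gives
\begin{equation*}
  \vert (\bar{\omega}^t)^\top \tilde{c}_j + \bar{b}^t\vert
  \leq \Vert \bar{\omega}^t\Vert\, \Vert \tilde{c}_j\Vert + \vert \bar{b}^t\vert
  \leq 2\sqrt{2\tilde{f}_t}\,\max_{i\in[1,N]}\Vert x^i\Vert + 1 = M,
\end{equation*}
which is precisely the value in \eqref{BigM 2}.

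Collecting the pieces, the point $(\bar{\omega}^t, \bar{b}^t, \bar{\xi}^t, \tilde{\eta}, \tilde{z})$ is feasible for the updated Problem~\eqref{equation3} with the big-$M$ of \eqref{BigM 2}, and its objective value is exactly $\tilde{f}_t$ as in \eqref{upperf}; hence $\tilde{f}_t$ bounds the optimal value from above. I expect the main obstacle to be bookkeeping rather than an idea: one must be sure that the cluster splitting in Algorithm~\ref{First version} keeps every updated centroid an average of original unlabeled points (so that $\Vert \tilde{c}_j\Vert$ stays controlled and $e_j \geq 1$ is preserved), and that the $\vert b\vert$-bound, established in Lemma~\ref{theob} only for an optimal solution, is legitimately transferred to $\bar{b}^t$ through Proposition~\ref{lemma3} rather than re-derived from scratch. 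Note that feasibility here concerns only the clustered relaxation, so it does not matter whether the old hyperplane happens to cut some new cluster.
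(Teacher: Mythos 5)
Your proposal is correct and follows essentially the same route as the paper: construct the explicit candidate $(\bar{\omega}^t, \bar{b}^t, \bar{\xi}^t, \tilde{\eta}, \tilde{z})$, dispatch the constraints not involving centroids, and verify \eqref{constraintc1}--\eqref{constraintc2} via Cauchy--Schwarz using $\Vert \bar{\omega}^t\Vert \leq \sqrt{2\tilde{f}_t}$, the $\vert b\vert$-bound from Lemma~\ref{theob}, and the centroid norm bound from Proposition~\ref{lemma3}. Your extra bookkeeping (the min/max reading of the cardinality interval, and checking that split centroids remain averages of original unlabeled points) only makes explicit what the paper leaves implicit.
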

\begin{proof}
  Consider $\tilde{z}$ as given in \eqref{ez} and $\tilde{\eta}_1,
  \tilde{\eta}_2$ as given in \eqref{eeta}.
  We now show  that $(\bar{\omega}^t,\bar{b}^t, \bar{\xi}^t, \tilde{z},
  \tilde{\eta} )$ is a feasible point for Problem
  \eqref{equation3}. Indeed, Constraints~\eqref{constraintx} and
  \eqref{constraints1}--\eqref{constraintz}  are clearly
  satisfied.
  Moreover, $(\bar{\omega}^t,\bar{b}^t, \bar{\xi}^t, \bar{\eta}^t,
  \bar{z}^t)$ provides the objective function value given
  by \eqref{upperf} and
  \begin{equation*}
    \Vert \bar{\omega}^t \Vert \leq \sqrt{2\tilde{f}_t },
    \quad
    \vert \bar{b}^t \vert \leq  \Vert \bar{\omega}^t \Vert  \max_{i \in
      [1,N]} \Vert x^i \Vert +1,
  \end{equation*}
  see the proof of Lemma \ref{theob}.
  This together with $\Vert c^j \Vert \leq \max_{i \in [n+1,N]}\Vert
  x^i \Vert$ implies
  \begin{equation*}
    \left(\bar{\omega}^t\right)^\top  c^j + \bar{b}^t \leq \Vert
    \bar{\omega}^t \Vert \max_{i \in [n+1,N]}\Vert x^i \Vert + \vert
    \bar{b}^t \vert \leq 2 \sqrt{2\tilde{f}_t} \max_{i \in [1,N]}
    \Vert x^i \Vert +1 = M
  \end{equation*}
  and
  \begin{equation*}
  \left(\bar{\omega}^t\right)^\top  c^j + \bar{b}^t \geq -M.
  \end{equation*}
  Hence, Constraints \eqref{constraintc1} and \eqref{constraintc2} are
  satisfied. Since $(\bar{\omega}^t, \bar{b}^t, \bar{\xi}^t, \tilde{z},
  \tilde{\eta})$ is a feasible point for Problem~\eqref{equation3},
  $\tilde{f}_t$ is an upper bound for Problem~\eqref{equation3}.
\end{proof}

Using Theorem~\ref{isaupper}, we can update $M$ in each
iteration of Algorithm~\ref{Second version} as in~\eqref{BigM 2}.
The following theorem establishes that as Algorithm~\ref{First
  version}, Algorithm~\ref{Second version} always terminates after
finitely many iterations.

\begin{thm}
  \label{algo2end}
  The Algorithm~\ref{Second version} terminates after at most $$2m-k^1 +
  \frac{(1-\hat{\Delta}^1)}{\tilde{\Delta}} $$ iterations, where $m$
  is the number of unlabeled data points, $k^1$ is the number of
  initial clusters, and $\hat{\Delta}^1, \tilde{\Delta}$ are inputs of
  Algorithm~\ref{Second version}.
\end{thm}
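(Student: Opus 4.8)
The plan is to bound the total number of non-terminating iterations by partitioning them according to which of the two progress mechanisms of Algorithm~\ref{Second version}---splitting a cut cluster, or increasing the threshold parameter $\hat{\Delta}$---is responsible for preventing termination, and then to bound each part separately. An iteration~$t$ fails to terminate precisely when the hyperplane $(\omega^t,b^t)$ cuts an active cluster or when $\mathcal{J}^t\neq\emptyset$. I would therefore split the non-terminating iterations into three disjoint groups: (G1) those in which at least one cluster is split; (G2a) those with no split but $\mathcal{J}^t\neq\emptyset$ and $\hat{\Delta}^t<1$; and (G2b) those with no split but $\mathcal{J}^t\neq\emptyset$ and $\hat{\Delta}^t=1$.

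The first two groups are bounded essentially as in Theorem~\ref{finite}. For G1, the total number of clusters---the active ones together with the discarded ones collected in $\mathcal{G}^t$---increases only through splitting, since discarding and reactivating merely move a cluster between the two pools. Starting from $k^1$ clusters and capping at $m$ (one per unlabeled point), at most $m-k^1$ splits can occur, and since every G1 iteration performs at least one split, $|\mathrm{G1}|\le m-k^1$. For G2a, every such iteration has $\mathcal{J}^t\neq\emptyset$ and $\hat{\Delta}^t<1$, so Step~\ref{Second version:update-delta} strictly increases $\hat{\Delta}$. As $\hat{\Delta}$ is monotone non-decreasing and bounded above by $1$, the number of iterations in which it strictly increases---and hence $|\mathrm{G2a}|$---is at most $(1-\hat{\Delta}^1)/\tilde{\Delta}$.

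The main obstacle is the G2b bound, and this is where I expect the real work to lie. The key claim is that once $\hat{\Delta}^t=1$, the discard set $\mathcal{G}^t$ can no longer grow. For $\hat{\Delta}^t=1$ the quantile in~\eqref{delta} reduces to $\Delta^t=\max_j|(\omega^t)^\top c^j+b^t|$, and for any uncut cluster---so in particular for every active cluster in a G2b iteration---all points lie on one side of the hyperplane, whence the centroid distance $|(\omega^t)^\top c^j+b^t|$ equals the average of the point distances $|(\omega^t)^\top x^\ell+b^t|$. These point distances therefore cannot all exceed $\Delta^t$, since their average is at most $\Delta^t$, so the discard test in Step~\ref{Secondversion:removed} fails. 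Consequently, in each G2b iteration $\mathcal{G}^t$ does not grow, while Step~\ref{Secondversion:change2} deletes the nonempty set $\mathcal{J}^t\subseteq\mathcal{G}^t$, strictly decreasing $|\mathcal{G}^t|$; as $|\mathcal{G}^t|\le m$ throughout, this forces $|\mathrm{G2b}|\le m$. Summing the three bounds yields the claimed $2m-k^1+(1-\hat{\Delta}^1)/\tilde{\Delta}$. The delicate points to get right are fixing the convention for the quantile at the boundary value $\hat{\Delta}^t=1$ and checking carefully that neither splits nor reactivations let $\mathcal{G}^t$ grow once the cap is reached, so that the G2b count accumulates correctly across the possibly interleaved G1 iterations.
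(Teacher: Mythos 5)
Your proposal is correct and follows essentially the same route as the paper: at most $m-k^1$ iterations with a split, at most $(1-\hat{\Delta}^1)/\tilde{\Delta}$ iterations in which $\hat{\Delta}$ strictly increases, and at most $m$ further iterations with $\mathcal{J}^t\neq\emptyset$ once $\hat{\Delta}^t=1$, since then $\Delta^t=\max_j\vert(\omega^t)^\top c^j+b^t\vert$ and $\mathcal{G}^t$ can only shrink. Your averaging argument for why no uncut cluster can pass the discard test at that point is a justification the paper leaves implicit, so if anything your write-up is slightly more careful on the one delicate step.
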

\begin{proof}
  In Algorithm~\ref{Second version}, the number of iterations can only
  be greater as in Algorithm~\ref{First version} if there is some iteration
  $t$ for which $\mathcal{J}^t \neq \emptyset$ holds but the hyperplane
  does not cut any cluster. At each iteration in which this happens,
  $\hat{\Delta}^t$ is increased and, in the worst case, i.e.,
  \begin{equation*}
    \hat{t} \define  m-k^1 + \frac{(1-\hat{\Delta}^1)}{\tilde{\Delta}},
  \end{equation*}
  we get $\hat{\Delta}^{\hat{t}} = 1$. This implies that for all
  further iterations $t$,
  \begin{equation*}
    \Delta^t = \max_{j \in [1,k^t]} \vert (\omega^t)^\top c^j + b^t\vert
  \end{equation*}
  holds.
  Thus, no cluster is added to the set $\mathcal{G}^t$.
  Since $\vert \mathcal{G}^{\hat{t}} \vert \leq m$ and $\mathcal{J}^t
  \subset \mathcal{G}^{\hat{t}}$, Algorithm~\ref{Second version} can
  only have $m$ more iterations with $\mathcal{J}^t \neq \emptyset$.
  This means that the maximum number of iterations is $2m - k^1 +
  (1-\hat{\Delta}^1)/\tilde{\Delta}$.
\end{proof}

\rev{Although Theorem~\ref{algo2end} shows that, in the worst case,
  Algorithm~\ref{Second version} can take more iterations than
  Algorithm~\ref{First version} to terminate, Algorithm~\ref{Second
    version} solves problems with less binary variable in every
  iteration, which means that the time per iteration will be lower
  compared to Algorithm~\ref{First version}.}

Note that the objective function value obtained by
Algorithm~\ref{Second version} is an upper bound for the objective
function value of Problem~\eqref{equation2}.

\begin{thm}
  \label{algo2up}
  Let $(\bar{\omega}, \bar{b}, \bar{\xi}, \bar{\eta}, \bar{z})$ be the
  point returned by Algorithm~\ref{Second version}.
  Then, \rev{$(\bar{\omega}, \bar{b}, \bar{\xi}, \bar{\eta}, \bar{z})$ is feasible}
  for Problem~\eqref{equation2} with
  \begin{equation*}
    M = 2\sqrt{2\bar{f}}\max_{i \in[1,N]} \Vert x^i \Vert + 1
  \end{equation*}
  \rev{and, consequently,
    \begin{equation*}
      \bar{f} \define \frac{\Vert\bar{\omega} \Vert ^ 2
      }{2} + C_1\sum_{i=1}^n\bar{ \xi_i} +C_2 (\bar{\eta}_1 +
      \bar{\eta}_2)
    \end{equation*}
    is an upper bound of Problem~\eqref{equation2}.
  }
\end{thm}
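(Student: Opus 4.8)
The plan is to mirror the proof of Theorem~\ref{theoupper}, the only genuinely new ingredient being the bookkeeping of the clusters that have been discarded into the two residual clusters during the run of Algorithm~\ref{Second version}. As before, I would build from the returned solution $(\bar{\omega}, \bar{b}, \bar{\xi}, \bar{\eta}, \bar{z})$ an expanded binary vector $\tilde{z}$ on the level of the individual unlabeled points and then show that $(\bar{\omega}, \bar{b}, \bar{\xi}, \bar{\eta}, \tilde{z})$ is feasible for Problem~\eqref{equation2} with the stated~$M$.

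Concretely, for every active cluster $\mathcal{C}_j$ of the final iteration I set $\tilde{z}_i = \bar{z}_j$ for all $x^i \in \mathcal{C}_j$, exactly as in Theorem~\ref{theoupper}; for every discarded cluster, i.e., one that was absorbed into the residual cluster on its side, I set $\tilde{z}_i$ equal to the $\bar{z}$-value of that residual cluster. First I would note that Constraints~\eqref{3}, \eqref{8}, \eqref{sumeta}, and~\eqref{10} hold trivially, as they involve only the unchanged components $(\bar{\omega}, \bar{b}, \bar{\xi}, \bar{\eta})$ and the binary range of $\tilde{z}$. For the cardinality constraint~\eqref{6}, I would use that the residual-cluster mechanism preserves point counts: when a cluster of cardinality $e_j$ is absorbed, its count is added to the residual cluster on the same side, so that $\sum_{i=n+1}^N \tilde{z}_i = \sum_j e_j \bar{z}_j$, the right-hand sum being taken over the final clusters. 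Constraint~\eqref{constraints1} then yields~\eqref{6}.

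The heart of the argument is the verification of the big-$M$ inequalities~\eqref{4} and~\eqref{5}. Here I would first record, exactly as in Lemma~\ref{theob}, the bounds $\Vert\bar{\omega}\Vert \leq \sqrt{2\bar{f}}$ and $\vert\bar{b}\vert \leq \Vert\bar{\omega}\Vert \max_{i\in[1,N]}\Vert x^i\Vert + 1$, which follow from the definition of $\bar{f}$ and the optimality of $\bar{b}$. Given these, it suffices to show that every unlabeled point with $\tilde{z}_i = 1$ satisfies $\bar{\omega}^\top x^i + \bar{b} \geq 0$ and every point with $\tilde{z}_i = 0$ satisfies $\bar{\omega}^\top x^i + \bar{b} \leq 0$; the two remaining inequalities then reduce to the norm bounds precisely as in Theorem~\ref{theoupper}, giving $\bar{\omega}^\top x^i + \bar{b} \leq \Vert\bar{\omega}\Vert\max_{i\in[1,N]}\Vert x^i\Vert + \vert\bar{b}\vert \leq 2\sqrt{2\bar{f}}\max_{i\in[1,N]}\Vert x^i\Vert + 1 = M$ and symmetrically $\geq -M$. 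For active clusters the required side condition holds because the termination test guarantees that the final hyperplane cuts no active cluster, so all points of such a cluster share the side encoded by $\bar{z}_j$. For discarded clusters, I would invoke the second half of the termination condition, namely $\mathcal{J}^t = \emptyset$: this certifies that no point of any discarded cluster has changed sides, so every such point still lies on the side it occupied when it was absorbed into its residual cluster, which is exactly the side encoded by that residual cluster's $\bar{z}$-value. Hence the side conditions hold for all unlabeled points, \eqref{4} and~\eqref{5} follow, feasibility is established, and the upper-bound claim is immediate since $\bar{f}$ is then the objective value of a feasible point of Problem~\eqref{equation2}.

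I expect the main obstacle to lie precisely in the treatment of the discarded clusters: one must argue carefully that the two termination conditions together---no active cluster is cut and $\mathcal{J}^t = \emptyset$---jointly certify that every unlabeled point, whether residing in an active or in a residual cluster, is correctly classified by the sign of $\bar{\omega}^\top x^i + \bar{b}$ relative to its assigned $\tilde{z}_i$, and that the count preservation of the residual assignment keeps the cardinality constraint intact. Once this side-and-count bookkeeping is in place, the remaining estimates are identical to those in Lemma~\ref{theob} and Theorem~\ref{theoupper}.
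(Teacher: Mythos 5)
Your proposal is correct and follows essentially the same route as the paper, whose proof of this theorem is a one-line reduction to the earlier feasibility argument (lift the cluster-level binaries $\bar z_j$ to point-level binaries $\tilde z_i$, check the cardinality constraint via $\sum_i \tilde z_i = \sum_j e_j \bar z_j$, and verify the big-$M$ inequalities using the bounds $\Vert\bar\omega\Vert \le \sqrt{2\bar f}$ and $\vert\bar b\vert \le \Vert\bar\omega\Vert\max_i\Vert x^i\Vert + 1$ together with the fact that at termination no cluster is cut and $\mathcal{J}^t=\emptyset$). You are in fact more explicit than the paper about the bookkeeping for clusters absorbed into the residual clusters, which is a welcome addition rather than a deviation.
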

\begin{proof}
  Since Algorithm~\ref{Second version} terminates when no cluster
  changes the side and no cluster is cut by the hyperplane, the proof
  is the same as for Theorem~\ref{isaupper}.
\end{proof}

\rev{As before, we can use the point obtained from
  Algorithm~\ref{Second version} to warm start
  Problem~\eqref{equation2}.}


\section{Using IRCM for Warm-Starting}
\label{section c2svm3+}

As stated in Theorem \ref{algo2up}, the solution found by
Algorithm~\ref{Second version} is feasible for
Problem~\eqref{equation2}.
Hence, we can use it for warm-starting the solution process of
Problem~\eqref{equation2}.
The next lemma establishes that unlabeled points can be fixed to be in
one side of the hyperplane.
\begin{lem}
  \label{lemma1}
  Let $(\bar{\omega}, \bar{b}, \bar{\xi}, \bar{\eta}, \bar{z})$ be a
  feasible point of Problem~\eqref{equation2} with objective function
  value~$\bar{f}$.
  Furthermore, let $(\omega^*,b^*, \xi^ *, \eta^*, z^*)$ be an optimal
  solution of Problem~\eqref{equation2} with objective function
  value~$f^*$.
  Set
  \begin{align*}
    P_u & \define \Defset{i \in [n+1,N]}{(\omega^*)^{\top}  x^i + b^* >0 },
    \\
    N_u & \define \Defset{i \in [n+1,N]}{(\omega^*)^{\top}  x^i + b^* < 0},
  \end{align*}
  and let $S_p \subseteq P_u$, $S_n \subseteq N_u$ be arbitrarily chosen
  subsets and let $x^s \notin S_n$ be an unlabeled point with
  $\bar{\omega}^{\top} x^s + \bar{b} <0$.
  Then, the objective function value $\tilde{f}$ given by any feasible
  point of the problem
  \begin{varsubequations}{P5}
    \label{equation5}
    \begin{align}
      \min_{\omega,b,\xi,\eta, z} \quad
      & \frac{\Vert\omega \Vert ^ 2  }{2} + C_1
        \sum_{i=1}^n \xi_i +C_2 (\eta_1 + \eta_2)
        \label{svmfunction4} \\
      \text{s.t} \quad
      & y_i (\omega^{\top} x^i +b)  \geq 1 - \xi_i, \quad   i \in
        [1,n],
        \label{constraintx1} \\
      & \omega^{\top}  x^i +b \leq z_iM, \quad  i\in [n+1,N]
        \setminus (\{s\} \cup S_p \cup S_n),
        \label{constraintc11} \\
      & \omega^{\top}  x^i +b \geq -(1-z_i)M, \quad i\in
        [n+1,N] \setminus (\{s\} \cup S_p \cup S_n),
        \label{constraintc21} \\
      & \omega^{\top}  x^i +b \geq 0, \quad i\in S_p,
        \label{constraintc2a1}\\
      & \omega^{\top}  x^i +b \leq 0, \quad i\in S_n,
        \label{constraintc2b1}\\
      &  0 \leq   \omega^{\top}  x^s +b \leq z_sM,
        \label{constraintc2d1}\\
      &   \tau - \eta_1  \leq \vert S_p\vert +
        \sum_{i\in[n+1,N]\setminus( S_p \cup S_n)   } z_i
        \leq \tau + \eta_2,
        \label{constraints11} \\
      & \xi_i   \geq 0, \quad  i \in [1, n],
        \label{constraintxi1} \\
      & \eta_1, \eta_2 \geq 0,
        \label{constrainteta1} \\
      & z_i \in \{0,1\}, \quad i\in[n+1,N]\setminus( S_p \cup S_n),
        \label{constraintz1}
    \end{align}
  \end{varsubequations}
  with $M$ as defined in \eqref{BigM 2}, satisfies the following properties:
  \begin{enumerate}
  \item[(a)] $\tilde{f}$ is an upper bound for $f^*$,
  \item[(b)] if $\tilde{f}$ is the optimal objective function value of
    Problem~\eqref{equation5} and $\bar{f} < \tilde{f}$ is satisfied, it
    holds $(\omega^*)^{\top} x^s + b^* <0$, i.e., $x^s \in N_u.$
  \end{enumerate}
\end{lem}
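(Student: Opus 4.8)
The plan is to prove both claims by relating the feasible regions of Problem~\eqref{equation5} and Problem~\eqref{equation2} through the natural identification of their binary vectors, in which the indices in $S_p$ are read as $z_i = 1$ and those in $S_n$ as $z_i = 0$. This turns part~(a) into a ``restriction/extension'' statement and part~(b) into a contradiction argument that plays the optimizer of Problem~\eqref{equation2} against the optimal value of Problem~\eqref{equation5}.

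For part~(a) I would start from an arbitrary feasible point $(\omega, b, \xi, \eta, z)$ of Problem~\eqref{equation5} and extend it to a candidate point of Problem~\eqref{equation2} by setting $\hat z_i \define z_i$ for $i \in [n+1,N]\setminus(S_p\cup S_n)$, $\hat z_i \define 1$ for $i \in S_p$, and $\hat z_i \define 0$ for $i \in S_n$. The objective is identical in both problems and independent of the binary variables of the fixed indices, so $(\omega,b,\xi,\eta,\hat z)$ retains the value $\tilde f$. It then remains to check the constraints: \eqref{constraintx1} is \eqref{3}, and \eqref{constraintxi1}--\eqref{constraintz1} reproduce \eqref{8}, \eqref{sumeta}, and \eqref{10}. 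For the cardinality condition I would use $\sum_{i=n+1}^N \hat z_i = \vert S_p\vert + \sum_{i \in [n+1,N]\setminus(S_p\cup S_n)} z_i$, so that \eqref{constraints11} becomes exactly \eqref{6}; and the one-sided constraints \eqref{constraintc2a1}, \eqref{constraintc2b1}, \eqref{constraintc2d1} give the relevant halves of \eqref{4}--\eqref{5} on the fixed indices and on $s$. Since the extended point is feasible for Problem~\eqref{equation2} with optimal value $f^*$, we conclude $\tilde f \ge f^*$.

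For part~(b) I would argue by contradiction, assuming $(\omega^*)^\top x^s + b^* \ge 0$, and show that the optimizer $(\omega^*, b^*, \xi^*, \eta^*, z^*)$ of Problem~\eqref{equation2}, restricted to the free indices, is feasible for Problem~\eqref{equation5}. By $S_p \subseteq P_u$ and $S_n \subseteq N_u$ we have $(\omega^*)^\top x^i + b^* > 0$ on $S_p$ and $<0$ on $S_n$, so the sign constraints \eqref{constraintc2a1}--\eqref{constraintc2b1} hold, and by \eqref{4}--\eqref{5} these strict signs force $z_i^* = 1$ on $S_p$ and $z_i^* = 0$ on $S_n$. The assumption makes \eqref{constraintc2d1} hold with $z_s = z_s^*$, and the remaining big-$M$ and cardinality constraints reduce to those of Problem~\eqref{equation2} for the values just fixed, the latter because $\sum_{i=n+1}^N z_i^* = \vert S_p\vert + \sum_{\text{free } i} z_i^*$. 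Hence $\tilde f \le f^*$, and with part~(a) this gives $\tilde f = f^*$. Since $(\bar\omega,\bar b,\bar\xi,\bar\eta,\bar z)$ is feasible for Problem~\eqref{equation2}, we have $\bar f \ge f^* = \tilde f$, contradicting $\bar f < \tilde f$; therefore $(\omega^*)^\top x^s + b^* < 0$, i.e.\ $x^s \in N_u$.

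The step I expect to be the main obstacle is the big-$M$ bookkeeping on the sign-fixed indices in part~(a): Problem~\eqref{equation5} only imposes the one-sided constraints \eqref{constraintc2a1}--\eqref{constraintc2b1} there, whereas Problem~\eqref{equation2} additionally demands $\vert \omega^\top x^i + b\vert \le M$. To close this gap I would reuse the estimates of Lemma~\ref{theob}, bounding $\Vert\omega\Vert \le \sqrt{2\tilde f}$ and controlling $\vert b\vert$ via the constraint on $x^s$ together with the $S_n$-constraints, so that $\vert \omega^\top x^i + b\vert \le 2\sqrt{2\tilde f}\max_{i\in[1,N]}\Vert x^i\Vert + 1 = M$ for $M$ from \eqref{BigM 2}, with the symmetric estimate handling \eqref{5} on $S_n$. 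This check is unproblematic in part~(b) because there the inner products are evaluated at the optimizer of Problem~\eqref{equation2}, for which Lemma~\ref{theob} already supplies the bound. A secondary point to treat carefully is the strict-versus-nonstrict distinction: the strict inequalities in the definitions of $P_u$ and $N_u$ are exactly what pins down $z_i^*$ on the fixed indices, and the boundary case $(\omega^*)^\top x^s + b^* = 0$ is absorbed into the assumption ``$\ge 0$'', so that the negation yields the strict conclusion $x^s \in N_u$.
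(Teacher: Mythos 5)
Your proposal is correct and follows essentially the same route as the paper: part~(a) by extending any feasible point of Problem~\eqref{equation5} to a feasible point of Problem~\eqref{equation2} with the same objective value, and part~(b) by the contradiction $f^* \leq \bar{f} < \tilde{f} = f^*$ after checking that the optimizer of Problem~\eqref{equation2} satisfies the constraints of Problem~\eqref{equation5} under the assumption $(\omega^*)^{\top} x^s + b^* \geq 0$. You are in fact more careful than the paper's own (very terse) proof, which silently skips the big-$M$ bookkeeping on the sign-fixed indices that you correctly single out as the delicate step.
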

\begin{proof}
  \begin{enumerate}
  \item[(a)] The points that satisfy Constraints
    \eqref{constraintx1}--\eqref{constraintz1} are feasible for
    Problem~\eqref{equation2} and provide an objective function value
    $\tilde{f}$.
    Since $f^*$ is the optimal objective function value of
    Problem~\eqref{equation2}, $f^* \leq \tilde{f}$ holds.
  \item[(b)] Consider by contradiction that $(\omega^*)^{\top} x^s +
    b^* \geq 0$ holds.
    This means that $(\omega^*,b^*, \xi^ *, \eta^*, z^*)$ satisfies
    \eqref{constraintx1}--\eqref{constraintz1}.
    Moreover, since $\tilde{f}$ is the objective function for
    Problem~\eqref{equation5}, we get $f^* = \tilde{f}$.
    However, $f^* \leq \bar{f}$ holds.
    Thus,
    \begin{equation*}
      f^* \leq \bar{f} < \tilde{f } = f^*
    \end{equation*}
    yields a contradiction.
    \qedhere
  \end{enumerate}
\end{proof}

Note that the last lemma can be adapted for the case
$\bar{\omega}^{\top} x^s + \bar{b}> 0$.
In this case, the constraints~\eqref{constraintc2d1} need to be
replaced with
\begin{equation}
  \label{subs1}
  -(1-z_s)M \leq  \omega^{\top}  x^s +b \leq 0
\end{equation}
and (b) needs to be replaced with $(\omega^*)^{\top} x^s +
b^* >0$, i.e., $x^s \in P_u$.
Note that the more points we have fixed on one side, the solution
of Problem~\eqref{equation2} tends to be faster as there are fewer binary
variables.

Moreover, the solution of Problem~\eqref{equation2}  can be found by
solving the problem
\begin{varsubequations}{P6}
  \label{equation7}
  \begin{align}
    \min_{\omega,b,\xi,\eta, z} \quad
    & \frac{\Vert\omega \Vert ^ 2  }{2} + C_1  \sum_{i=1}^n
      \xi_i +C_2 (\eta_1 + \eta_2)
      \label{svmfunction5} \\
    \st \quad
    & y_i (\omega^{\top} x^i -b)  \geq 1 - \xi_i, \quad  i \in
      [1,n],
      \label{constraintx1a} \\
    & \omega^{\top}  x^i +b \leq z_iM, \quad  i\in [n+1,N]
      \setminus ( S_p \cup S_n),
      \label{constraintc11a}\\
    & \omega^{\top}  x^i +b \geq -(1-z_i)M, \quad  i\in [n+1,N]
      \setminus ( S_p \cup S_n),
      \label{constraintc21a} \\
    & \omega^{\top}  x^i +b \geq 0, \quad  i\in S_p,
      \label{constraintc2a1a}\\
    & \omega^{\top}  x^i +b \leq 0,  \quad  i\in S_n,
      \label{constraintc2b1a}\\
    & \tau - \eta_1  \leq \vert S_p\vert + \sum_{i\in[n+1,N]\setminus(
      S_p \cup S_n) } z_i \leq \tau + \eta_2,
      \label{constraints11a} \\
    & \xi_i   \geq 0, \quad i \in [1, n],
      \label{constraintxi1a} \\
    & \eta_1, \eta_2 \geq 0
      \label{constrainteta1a} \\
    & z_i \in \{0,1\}, \quad  i\in [n+1,N]
      \setminus ( S_p \cup S_n),
      \label{constraintz1a}
  \end{align}
\end{varsubequations}
where $S_p$ and $S_n$ are subsets of $P_u$ and $N_u$,
respectively.

Based on these results, we propose the following.
We compute the point $(\bar{\omega},\bar{b}, \bar{\xi},\bar{\eta},
\bar{z})$ using Algorithm~\ref{Second version}, leading to an
objective function value~$\bar{f}$ for Problem~\eqref{equation2}.
Afterward, we sort the indices $i \in \set{n+1, N}$, indicated by the
permutation~$\alpha : \set{n+1, N} \to \set{n+1, N}$, so that $\vert
\bar{\omega}^{\top}x^{\alpha(i)} + \bar{b} \vert \geq \vert
\bar{\omega}^{\top}x^{\alpha(i)+1} + \bar{b} \vert$ holds.

Consider now a given and fixed parameter~$B_{\max}$, a factor~$\gamma
\in (1, m / B_{\max}]$, and let~$\beta$ be $\gamma B_{\max}$
rounded to the next integer.
While the number of fixed points is smaller than~$B_{\max}$, we do the
following.
For $i \in \set{1,\dotsc, \beta}$, if
$\bar{\omega}^{\top}x^{\alpha(i)} + \bar{b} < 0$ holds, we try to solve
Problem~\eqref{equation5} using the limit time of~$T_{\max}$ and the
upper bound~$\bar{f}$.
If there is a feasible point of this problem,
we set $(\bar{\omega},\bar{b}, \bar{\xi},\bar{\eta}, \bar{z})$ to this
point and update the objective function value $\bar{f}$ accordingly.
If no feasible point could be computed and if the limit time was not
reached, we fix $x^s$ to be in the negative side.

Similarly, we do the same if $\bar{\omega}^{\top}x^{d_\ell} +
\bar{b} > 0$ holds with \eqref{constraintc2d1} replaced
by~\eqref{subs1}.
The method is formally described in Algorithm~\ref{Scheme version}.
\rev{Finally note that, although Problem~\eqref{equation5} is an MIQP,
  it is a feasibility problem, which is often easier to solve than an
  optimization problem in practice.
  Besides that, if the point obtained from Algorithm~\ref{Second
    version} is close to the optimum of Problem~\eqref{equation2},
  many unlabeled points will be fixed and Problem~\eqref{equation2}
  will be faster to solve.}

\begin{algorithm}
  \caption{Improved \& Warm-Started Re-Clustering Method (WIRCM)}
  \label{Scheme version}
  \SetAlgoLined
  \SetKwInOut{Input}{Input}\SetKwInOut{Output}{Output}
  \SetAlgoLined
  \SetKwInput{Input}{Input}\SetKwInOut{Output}{Output}
  \Input{$X~\in~\mathbb{R}^{d\times N}$ , $y \in \set{-1,1}^n$, $k^1
    \in \mathbb{N}$, $C_1 > 0$, $C_2 > 0$, $\tau \in \mathbb{N}$,
    $\hat{\Delta}^1 \in (0,1)$, $\tilde{\Delta} \in (0,1)$,
    $\mathcal{G}^1 = \emptyset$, $k^+ \in \mathbb{N}$, $T_{\max} > 0$,
    $B_{\max} \in \mathbb{N}$, and $\gamma \in (1, m / B_{\max}]$.}
  Compute the hyperplane $(\bar{\omega}, \bar{b})$ and $\bar{\xi},
  \bar{\eta}, \bar{z}$ using Algorithm~\ref{Second
    version}, leading to the objective function value~$\bar{f}$.
  Let $M$ be the last $M^t$ of Algorithm~\ref{Second version}.
  \label{Scheme version:call-other-alg}\\
  Sort the indices $i \in \{n+1,N\}$ such that
  $\vert \bar{\omega}^{\top}x^{\alpha(i)} + \bar{b} \vert  \geq \vert
  \bar{\omega}^{\top}x^{\alpha(i) + 1} + \bar{b} \vert$ holds
  and set $\beta$ to be $\gamma B_{\max}$ rounded to the next integer.
  \\
  \For{$i \in \set{1, \dotsc, \beta}$}{
    \If{$\vert S_p \vert + \vert S_n \vert \leq  B_{\max}$ }{
      \uIf{$\bar{\omega}^{\top} x^s + \bar{b} <0$}{
        Solve Problem~\eqref{equation5} with upper bound~$\bar{f}$ and
        a time limit $T_{\max}$.
        \\
        \uIf{there is a feasible point}{
          update $(\bar{\omega}, \bar{b}, \bar{\xi},
          \bar{\eta}, \bar{z})$, and $\bar{f}$
        }
        \ElseIf{$T_{\max}$ was not reached}{
          $S_n \leftarrow S_n \cup \{s\}$}
      }
      \ElseIf{$\bar{\omega}^{\top} x^s + \bar{b} >0$}{
        Solve the problem~\eqref{equation5} with
        \eqref{constraintc2d1} replaced by~\eqref{subs1}, using
        $\bar{f}$ as an upper bound and a time limit of $T_{\max}$.
        \\
        \uIf{there is a feasible point}{
          update $(\bar{\omega}, \bar{b}, \bar{\xi},
          \bar{\eta}, \bar{z})$, and $\bar{f}$
        }
        \ElseIf{$T_{\max}$ was not reached}{
          $S_p \leftarrow S_p \cup \{s\}$}
      }
    }
  }
  Compute the solution $(\omega^*, b^*, \xi^*, \eta^*, z^*)$ of
  Problem~\eqref{equation7} with $(\bar{\omega}, \bar{b}, \bar{\xi},
  \bar{\eta}, \bar{z})$ value and $\bar{f}$ as an upper bound.
\end{algorithm}


\section{Numerical Results}
\label{sec:numerical-results}

In this section, we present and discuss our computational results that
illustrate the benefits of knowing the total amount of each class of
unlabeled data and of using our approaches to speed up the solution
process. We evaluate this on different test sets from the
literature. The test sets  are described in Section
\ref{subsection-test-sets}, while the computational setup is depicted
in Section~\ref{subsection-comp-setup}. The evaluation criteria are
described in Section~\ref{comparsions-SVM} and the numerical results
are discussed in Section~\ref{numericalresults}.

\subsection{Test Sets}
\label{subsection-test-sets}

For the computational analysis of the proposed approaches, we consider
the subset of instances presented by \textcite{Olson2017PMLB} that are
suitable for classification problems and that have at most three
classes. We restrict ourselves to instances of at most three classes
to obtain an overall test set of manageable size.
Repeated instances are removed and instances with missing information
are reduced to the observations without missing information.
If three classes are given in an instance, we transform them into two
classes such that the class with label~1 represents the
positive class, and the other two classes represent the negative
class.
\rev{This results in a final test set of 97~instances; see
  Table~\ref{table1} in Appendix~\ref{sec:deta-inform-inst}.}

To avoid numerical instabilities, we re-scale all data sets as
follows.
For each coordinate $j \in [1,d]$, we compute
\begin{equation*}
  l_j = \min_{i \in [1,N]}\{x^i_j\},
  \quad
  u_j = \max_{i \in [1,N]}\{x^i_j\},
  \quad
  m_j = 0.5 \left(l_j + u_j \right)
\end{equation*}
and shift each coordinate~$j$ of all data points~$x^i$ via $\bar{x}^i_j
= x^i_j - m_j$. If we do this for all data points, they get
centered around the origin. Moreover, if a coordinate~$j$ of the
re-scaled points is still large, i.e.,  if $\tilde{l}_j = l_j - m_j <-
10^{2}$ or $\tilde{u}_j = u_j - m_j > 10^{2}$ holds, it is re-scaled
via
\begin{equation*}
  \tilde{x}^i_j = (\overline{v} - \underline{v} ) \frac{\bar{x}^i_j -
    \tilde{l}_j}{\tilde{u}_j-\tilde{l}_j} +  \overline{v},
\end{equation*}
with $\overline{v} = 10^2$ and $ \underline{v} = -10^{2}$.
The corresponding 29 instances that we re-scaled are marked with an
asterisk in Table~\ref{table1}.
\rev{Note that we use a linear transformation to scale the
  datasets. Hence, after computing the hyperplane for the
  scaled data, the respective hyperplane for the original data can
  also be computed ex post by applying another suitably chosen linear
  transformation as well.}

In our computational study, we want to highlight the importance of
cardinality constraints, especially for the case of non-representative
biased samples. Biased samples occur frequently in non-probability
surveys, which are surveys for which the inclusion process is not
monitored and, hence, the inclusion probabilities are unknown as
well.
Correction methods like inverse inclusion probability weighting are
therefore not applicable. For an insight into inverse inclusion
probability weighting, see \textcite{Skinner2011} and references
therein.

To mimic this situation, we create 5~biased samples with
$\SI{10}{\percent}$ of the data being labeled for each instance.
Different from a simple random sample in which each point has an equal
probability of being chosen as labeled data, in the biased sample, the
labeled data is chosen with probability $\SI{85}{\percent}$ for being
on the positive side of the hyperplane. Then, for each instance, with
a time limit of \SI{3600}{\second}, we apply the approaches listed
in Section~\ref{subsection-comp-setup}.
In Appendix~\ref{sec:num-results-simple-sample}, we also provide
the results under simple random sampling, which produces unbiased
samples. We see that the results form the proposed methods are similar
to the plain SVM in that setting. Hence, besides the additional
computational burden, there is no downside to use the proposed method in
case of an unknown sampling process.

\subsection{Computational Setup}
\label{subsection-comp-setup}

Our algorithm has been implemented in \codename{Julia}~1.8.5 and we use
\codename{Gurobi}~9.5.2 and
\codename{JuMP} \parencite{DunningHuchetteLubin2017} to solve
Problem~\eqref{l2svm}, \eqref{equation2}, and \eqref{equation3}.  All
computations were executed on the high-performance cluster
``Elwetritsch'', which is part of the ``Alliance of High-Performance
Computing Rheinland-Pfalz'' (AHRP).
We used a single Intel XEON SP 6126 core with \SI{2.6}{\giga\hertz} and
\SI{64}{\giga\byte}~RAM.

For each one of the 485~instances described in
Section~\ref{subsection-test-sets}, the following approaches
are compared:
\begin{enumerate}
\item[(a)] SVM as given in Problem~\eqref{l2svm}, where only labeled
  data are considered;
\item[(b)] CS$^3$VM as given in Problem~\eqref{equation2} with $M$ as given
  in~\eqref{validM};
\item[(c)] IRCM as described in Algorithm~\ref{Second version};
\item[(d)] WIRCM as described in Algorithm~\ref{Scheme version}.
\end{enumerate}
Based on our preliminary experiments, we set the penalty parameters
$C_1 = C_2 = 1$.
For WIRCM, we impose a time limit for solving
Problem~\eqref{equation5} of $T_{\max} = \SI{40}{\second}$.
Moreover, we choose $\gamma = 1.2$ and the maximum number~$B_{\max}$
of unlabeled points that can be fixed as
\begin{equation*}
  B_{\max} =
  \begin{cases}
    0.2m,  & \text{if } m \in  [1,100],\\
    0.25m,  & \text{if } m \in  (100,500],\\
    0.35m, & \text{if } m \in  (500,1000],\\
    0.45m,  & \text{otherwise}.
  \end{cases}
\end{equation*}
Finally, for IRCM and WIRCM, we set $\hat{\Delta}\rev{^1} = 0.8$,
$\tilde{\Delta} = 0.1$, $k^+ = 50$, and the initial number of clusters
is set to
\begin{equation*}
  k^1 =
  \begin{cases}
    10, & \text{if } m \in  [1,500],\\
    20, & \text{if } m \in  (500,1000],\\
    50, & \text{otherwise}.
  \end{cases}
\end{equation*}
\rev{A more detailed discussion of the choice of hyperparameters is
  given in Appendix~\ref{sec:sensibilty-parameters}.}

\subsection{Evaluation Criteria}
\label{comparsions-SVM}

The first evaluation criterion is the run time of \rev{SVM, }CS$^3$VM,
IRCM, and WIRCM.
The results will help to contextualize other evaluation criteria such
as accuracy and precision.
To compare run times, we use empirical cumulative distribution
functions (ECDFs).
Specifically, for $S$ being a set of solvers (or approaches as above)
and for $P$ being a set of problems, we denote denote by $t_{p,s} \geq
0$ the run time of approach~$s \in S$ applied to problem~$p \in P$
in seconds. If $t_{p,s} > 3600$, we consider problem~$p$ as not being
solved by approach~$s$.
With these notations, the performance profile of approach~$s$ is the
graph of the function~$\gamma_s : [0, \infty) \to [0,1]$ given by
\rev{
\begin{equation}
  \label{perf}
  \gamma_s(\sigma) = \frac{1}{\vert P \vert}\big\vert\left\{p \in P:
    t_{p,s} \leq \sigma \right\}\big \vert.
\end{equation}}%
The second evaluation criterion is based on Theorem \ref{upperf},
where we show that the objective function value of the point obtained
by IRCM is an upper bound for CS$^3$VM, and consequently for
Problem~\eqref{equation3} that is solved with WIRCM.
\rev{Note that SVM also provides a feasible point for CS$^3$VM and,
  consequently, provides an upper bound as well.
  Consider $(\omega,b, \xi)$ the solution of SVM, we compute the
  binary variables $z_i$, $i\in [n+1,N]$ as follows:}
\begin{equation*}
\rev{  z_i =
  \begin{cases}
    1, & \text{if } \omega^\top x^i + b >0,\\
   0, & \text{if } \omega^\top x^i + b <0. \\
  \end{cases}}
\end{equation*}
\rev{If $\omega^\top x^i + b =0$ for some $x^i$, we set}
\begin{equation*}
  \rev{  z_i =
    \begin{cases}
      1, & \text{if } \sum_{j \in [n+1,N] :  \omega^\top x^j + b \neq 0 } z_i \leq \tau ,\\
      0, & \text{otherwise.}
    \end{cases}}
\end{equation*}
\rev{Finally, we set}
$$
\rev{
  \eta_1 = \max\left\{0, \tau -  \sum_{i=n+1}^N
    z_i\right \},
  \quad
  \eta_2 = \max\left\{0,    \sum_{i=n+1}^N
    z_i - \tau\right \},}
$$
\rev{and the objective function value can be computed as}
$$
\rev{ \frac{\Vert\omega \Vert ^ 2  }{2} + C_1  \sum_{i=1}^n
  \xi_i +C_2 (\eta_1 + \eta_2).
}
$$

Based on that, we compare how close the objective function values
obtained from \rev{SVM,} CS$^3$VM, IRCM, and WIRCM are to the optimal
solution.
To this end, we use ECDFs, \rev{for which we replace $t_{p,s}$ by
  $f_{p,s}$ in Equation~\eqref{perf}} with
\begin{equation}
  \label{gap}
  \rev{f}_{p,s} \define \frac{b_{p,s}-f^*_{p}}{f^*_{p}},
\end{equation}
where $f^*_{p}$ is the optimal objective function value of problem~$p$
and $b_{p,s}$ is the objective function value obtained by approach~$s$.

Besides that, for each instance and for each approach described in
Section~\ref{subsection-comp-setup}, after computing the hyperplane
$(\omega, b)$, we classify all points $x^i$ as being on the positive
side if $\omega^\top x^i + b > 0$ and as being on the negative side if
$\omega^\top x^i + b <0$ holds.
For CS$^3$VM and WIRCM, if the hyperplane $(\omega, b)$  satisfies
$\omega^\top x^i + b = 0$ for some unlabeled point $x^i$, we classify
this point as positive or negative depending on the respective binary
variable~$z_i$.
On the other hand, for IRCM, if $\omega^\top x^i + b = 0$  for some
unlabeled point $x^i$, we classify this point as positive or negative
depending on $z_j$ with $j$ so that $x^i \in \mathcal{C}_j$.
For the labeled points in these three approaches and for all points in
the SVM, if  $\omega^\top x^i + b = 0$ holds, we classify the point on the
correct side.
Note that for the cases in which the IRCMs take more than
\SI{3600}{\second} to solve the instance, we use the last hyperplane
found by the algorithm.
If we hit the time limit in \codename{Gurobi} when solving CS$^3$VM
(either standalone or in the final phase of the WIRCM),
we take the best solution found so far.

Knowing the true label of all points, we then distinguish all points in
four categories: true positive (TP) or true negative (TN) if the point is
classified correctly in the positive or negative class, respectively,
as well as false positive (FP) if the point is misclassified in the
positive class and as false negative (FN) if the point is misclassified
in the negative class.
Based on that we compute two classification metrics, for which a
higher value indicates a better classification.
The first one is accuracy ($\AC$).
It measures the proportion of correctly classified points and is given
by
\begin{equation}
  \label{AC}
  \AC \define \frac{\TP + \TN}{\TP + \TN + \FP + \FN} \in [0,1].
\end{equation}
The second metric is precision ($\PR$).
It measures the proportion of correctly classified points among all
positively classified points and is computed by
\begin{equation}
  \label{prec}
  \PR \define \frac{\TP}{\TP + \FP} \in [0,1].
\end{equation}

The main comparison in terms of accuracy and precision is w.r.t.\ the
``true hyperplane'', i.e., the solution of Problem~\eqref{l2svm} on the
complete data with all $N$ points and all labels available. The main
question is how close the accuracy and precision is to the one of the
true hyperplane. Hence, we compute the ratios of the accuracy and
precision according to
\begin{equation}\label{compartrue}
  \widehat{\AC} \define \frac{\AC}{\AC_{\true}},
  \quad
  \widehat{\PR} \define \frac{\PR}{\PR_{\true}},
\end{equation}
where $\AC_{\true}$ and $\PR_{\true}$ are computed as in Equations
\eqref{AC} and \eqref{prec} for the true hyperplane.

We also compare the measures with the SVM method, which only considers
the information of the labeled data. For this purpose, we compute
\begin{equation}
  \label{comparSVM}
  \overline{\AC} \define \frac{\AC-\AC_{\SVM}}{\AC_{\SVM}},
  \quad
  \overline{\PR} \define \frac{\PR-\PR_{\SVM}}{\PR_{\SVM}},
\end{equation}
where $\AC_{\SVM}$ and $\PR_{\SVM}$ are computed as in~\eqref{AC}
and~\eqref{prec} for the SVM hyperplane. To keep the numerical results
section concise, we report on recall and the false positive rate in
Appendix~\ref{sec:furth-numer-results}.

\subsection{Numerical Results}
\label{numericalresults}

\subsubsection{Run Time}
\label{Computation time}

Figure~\ref{perfomancetime} shows the ECDFs for the measured run
times.
\rev{Clearly, SVM is the fastest algorithm.
  This is expected as the SVM does not include any binary variables
  related to the unlabeled points, which is in contrast to other
  approaches.}
It can be seen that the IRCM outperforms both CS$^3$VM and WIRCM.
This shows that the idea to cluster unlabeled data points
significantly decreases the run time.
However, we need to be careful with the interpretation of these run
times since termination of \rev{SVM and}  IRCM does not imply that a globally optimal
point is found, whereas this is guaranteed CS$^3$VM and the WIRCM.
The quality of the points found by \rev{ SVM and} IRCM will be discussed in the
next section.
The figure also clearly indicates that Problem~\eqref{EQproblem1} is
rather challenging:
Even IRCM, which terminates for the most instances within the time
limit (indicated by the gray and dashed vertical line) only does so
for $\SI{57}{\percent}$ of the instances.
Note that the WIRCM has the worst efficiency.
This obviously needs to be the case since due to Step~\ref{Scheme
  version:call-other-alg} of Algorithm~\ref{Scheme version}, its run
time always includes the run time of the IRCM.
\rev{To shed some light on the scalability of the approaches,
  we also present a brief analysis of the run times in dependence of
  the number of samples in Appendix~\ref{sec:performancepersize}.}

\begin{figure}
  \centering
  \includegraphics[width=0.6\textwidth]{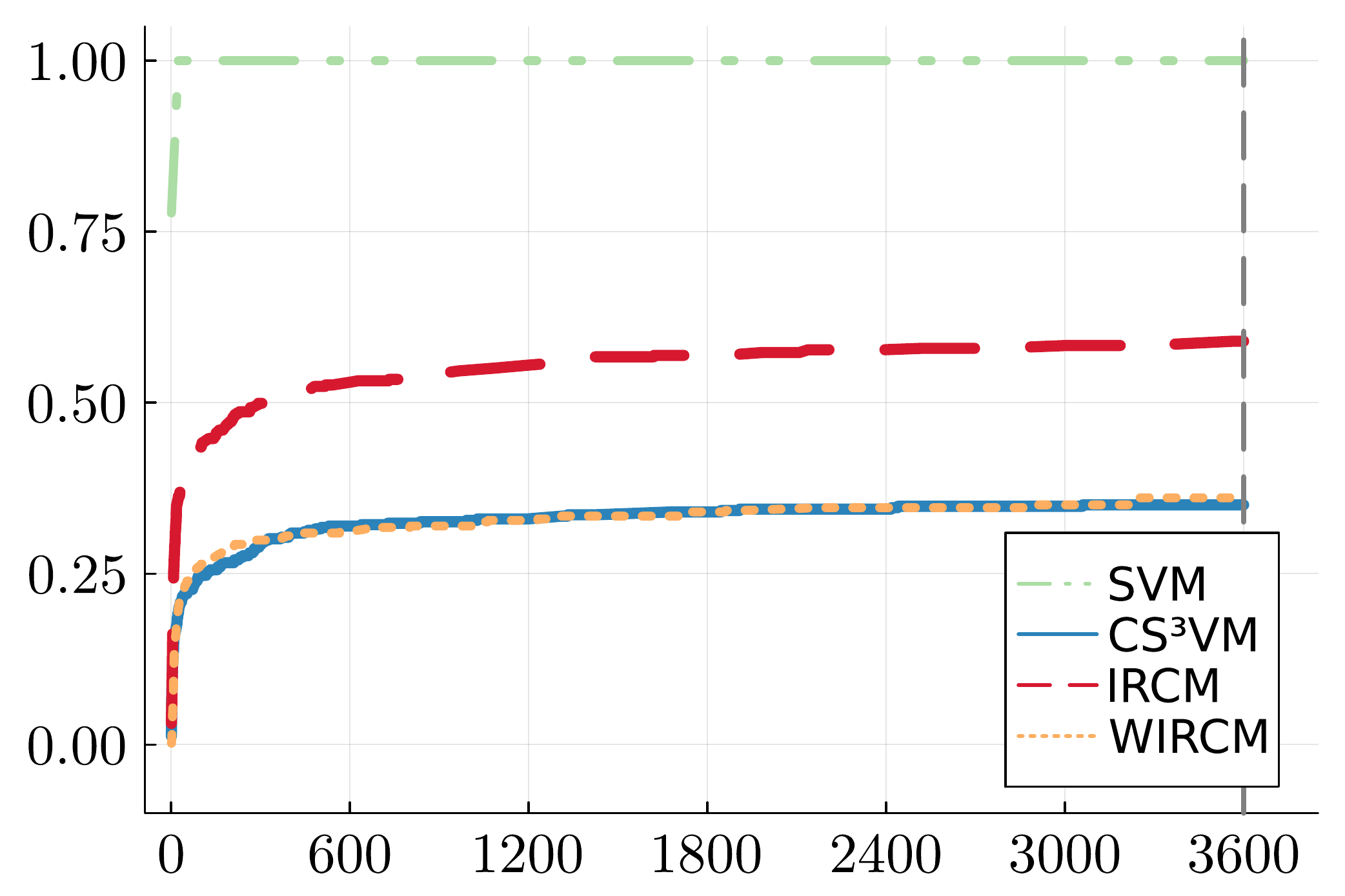}
  \caption{\rev{ECDFs for run time (in seconds).}}
  \label{perfomancetime}
\end{figure}

\subsubsection{Quality of the Obtained Upper Bounds}

\begin{figure}
  \centering
  \includegraphics[width=0.6\textwidth]{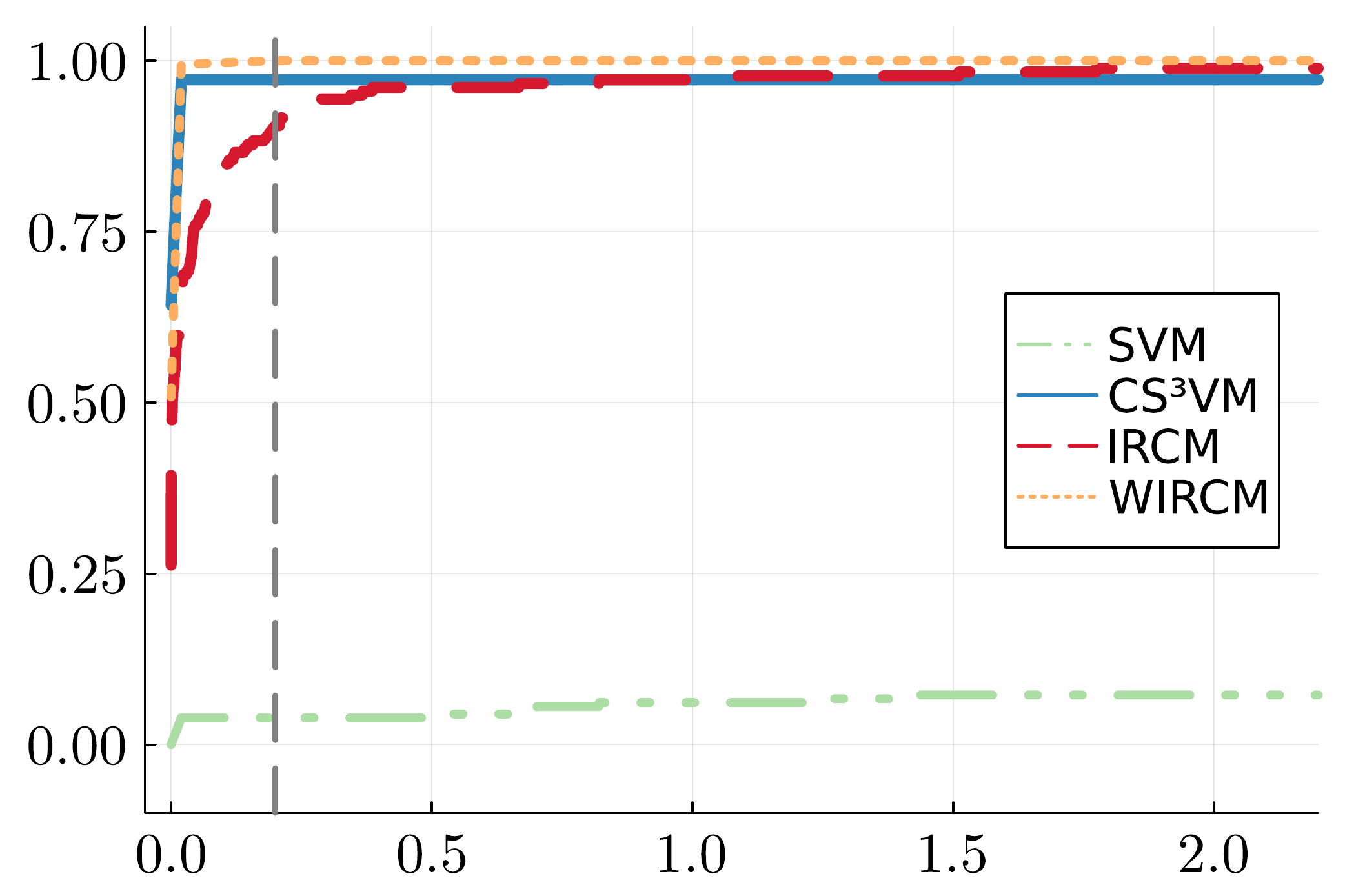}
  \caption{\rev{ECDFs for the quality of the obtained upper bounds.}}
  \label{perfomanceobjectivefunction}
\end{figure}

As discussed in the last section, for some instances none of the three
approaches \rev{that actually consider the unlabeled data} terminate
within the given time limit. This means we do not
obtain the optimal objective function value for these instances, which
we, moreover, can only provably obtain by CS$^3$VM and the WIRCM.
In fact, we have the optimal solution for 179~instances. These are
the baseline instances for Figure~\ref{perfomanceobjectivefunction},
which shows the ECDFs for the upper bound quality, as defined
in~\eqref{gap}.
\rev{Note that the objective function value obtained by SVM is very
  far from the optimal value, while} the IRCM finds an objective
function value rather close to the optimal value (with $f_{ps} \leq
0.2$, see the gray dashed vertical line) in $\SI{90}{\percent}$ of
these instances.
Besides that, the WIRCM outperforms CS$^3$VM in this comparison,
which means using the IRCM as a warm start improves the result.

The consequences of the results so far are the following.
If one is interested in getting a rather good feasible point as
quickly as possible, one should use the IRCM.
If one is able to spend some more run time, one should use the WIRCM.
Hence, both novel methods derived in this paper have their advantage
over just solving the CS$^3$VM with a standard MIQP solver.

\subsubsection{Accuracy}
\label{secaccuracy}

\rev{For some instances, none of the three approaches that
  actually tackle the unlabeled data terminate within the given time
  limit. Hence, our first comparison only considers instances for which
  CS$^3$VM terminates within the time limit.}

\begin{figure}
  \centering
  \includegraphics[width=0.495\textwidth]{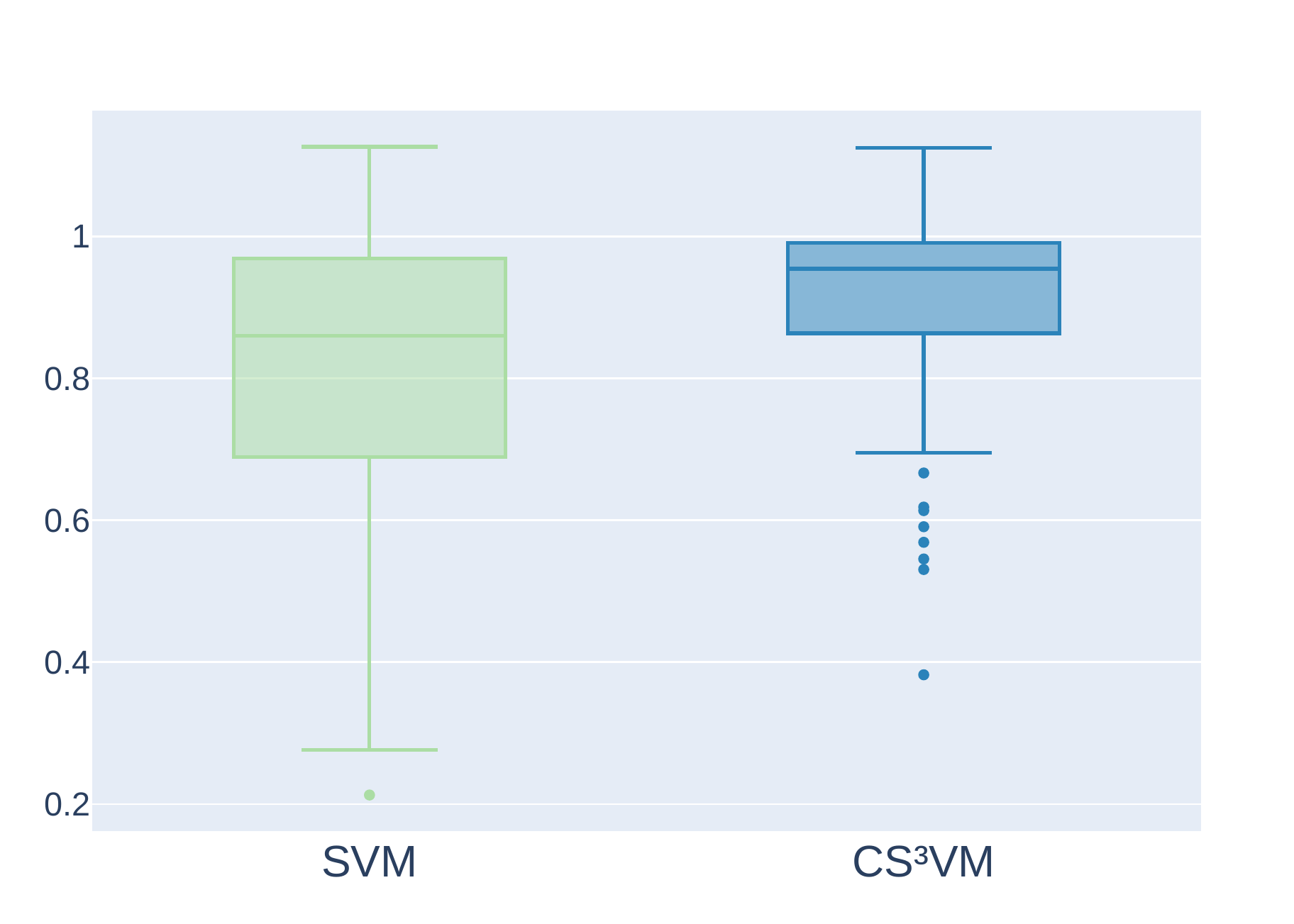}
  \includegraphics[width=0.495\textwidth]{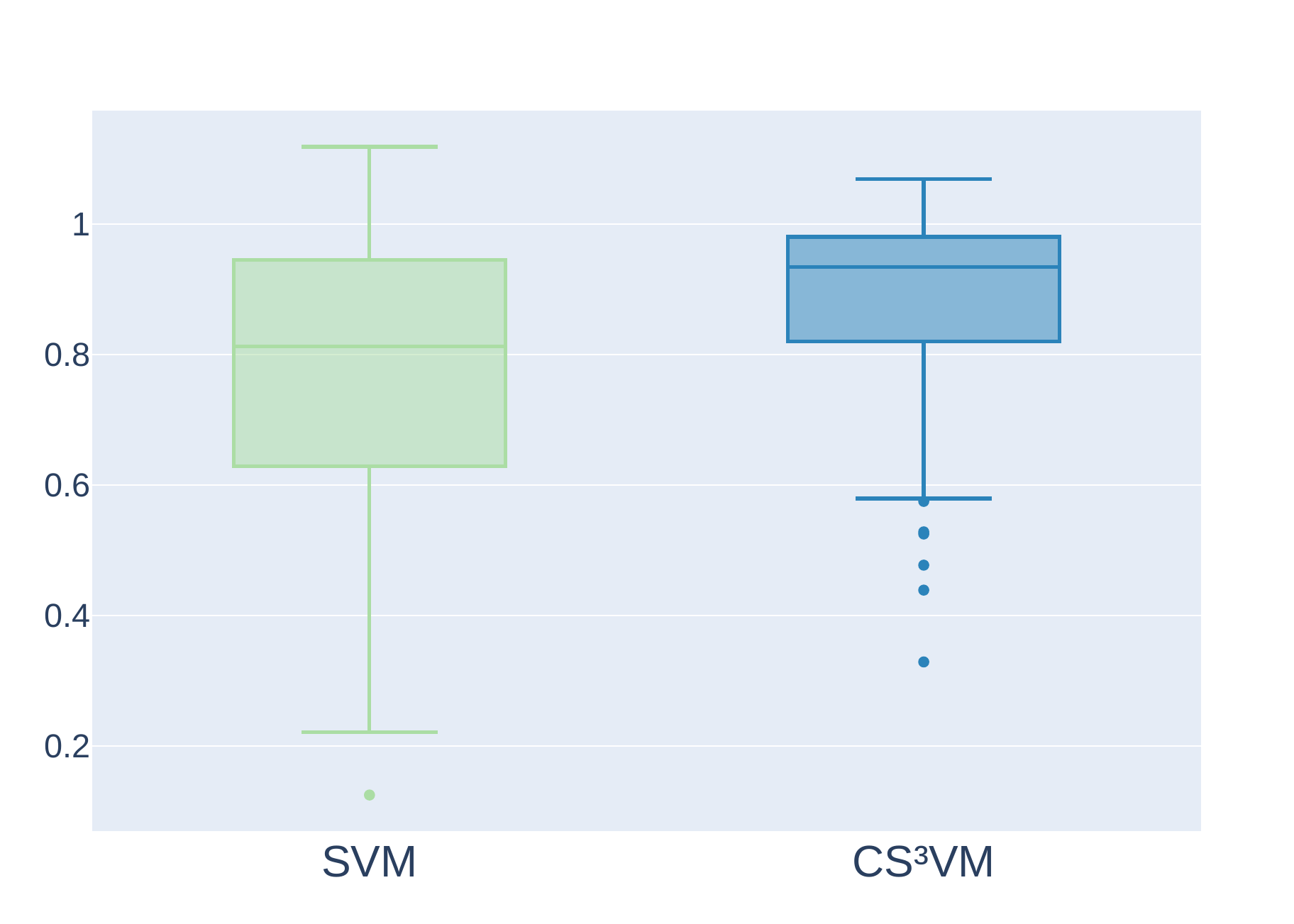}
  \caption{\rev{Relative accuracy $\widehat{\AC}$ w.r.t.\ the true
      hyperplane; see~\eqref{compartrue}.
      Only those instances are considered for which CS$^3$VM terminated.
    Left: Comparison for all data points.
    Right: Comparison only for unlabeled data points.}}
  \label{ACtruCSVM}
\end{figure}
\rev{As can be seen in Figure~\ref{ACtruCSVM}, the relative
accuracy~$\widehat{\AC}$ (w.r.t.\ the true hyperplane) of CS$^3$VM,
is closer to 1 than the relative accuracy of SVM---especially for
the unlabeled data.
This means that using the unlabeled points as well as the cardinality
constraint allows to re-produce the classification of the true
hyperplane with higher accuracy than the standard SVM does.
Besides that, the relative accuracy of the SVM is more spread than the
one of the other approaches, indicating that there is comparable more
variation in the results as compared the results of CS$^3$VM. The box
in the boxplot depicts the range of the medium \SI{50}{\percent}
of the values; \SI{25}{\percent} of the values are
below and \SI{25}{\percent} are above the box.}

\begin{figure}
  \centering
  \includegraphics[width=0.495\textwidth]{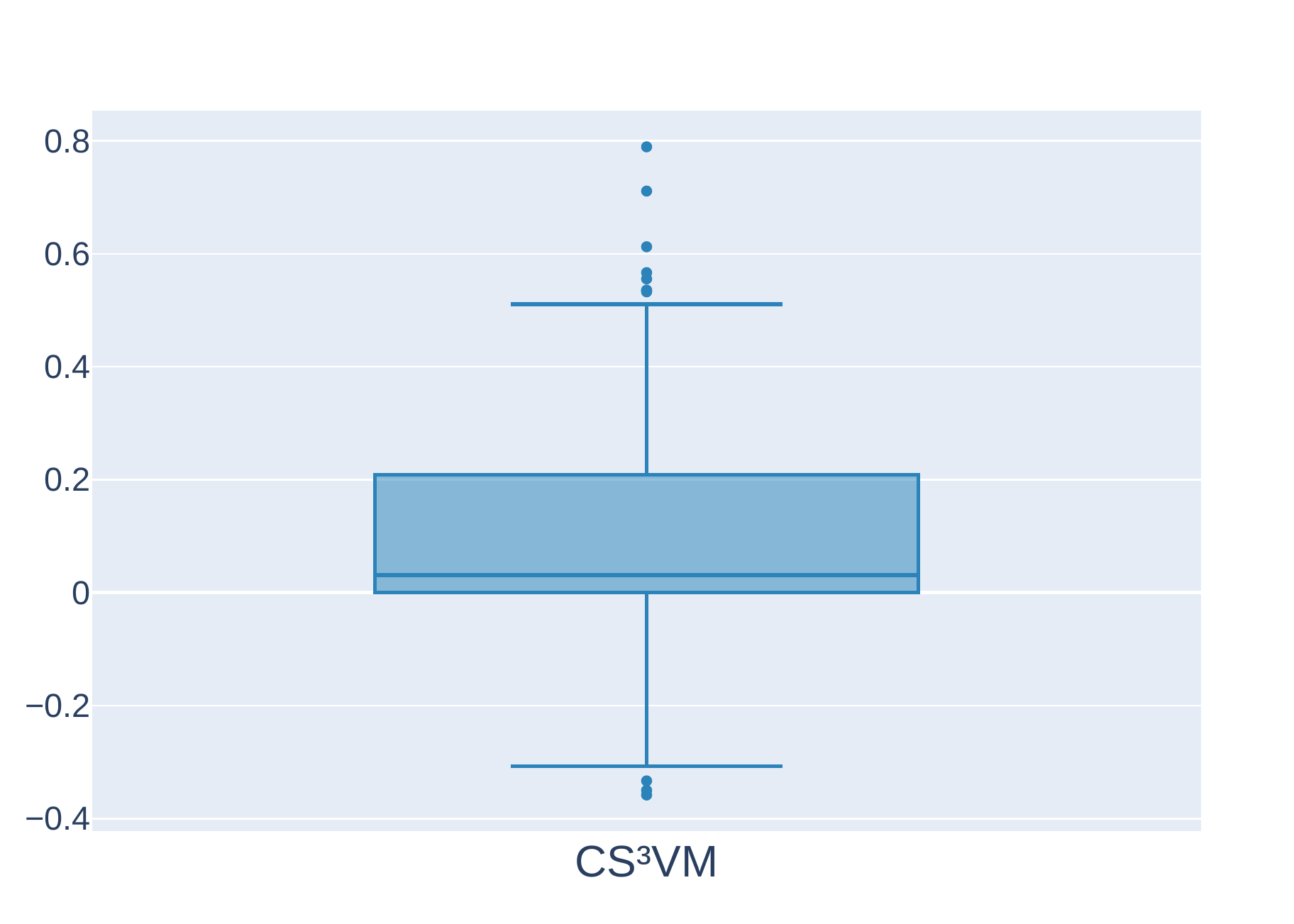}
  \includegraphics[width=0.495\textwidth]{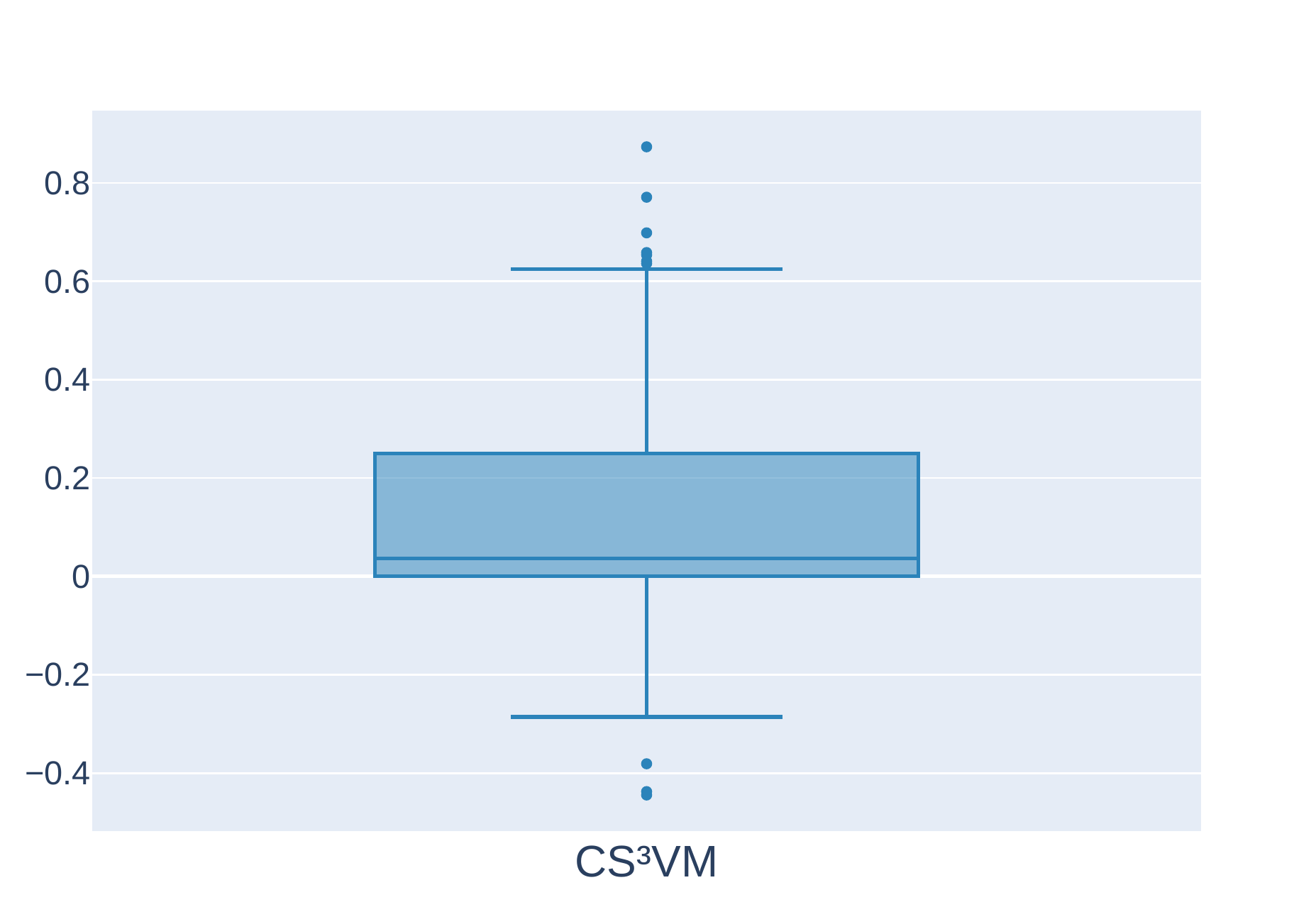}
  \caption{\rev{Accuracy values $\overline{\AC}$ w.r.t.\ the SVM; see
    \eqref{comparSVM} only consider the instances that CS$^3$VM terminated.
    Left: Comparison for all data points.
    Right: Comparison only for unlabeled data points.}}
  \label{ACSVMCSVM}
\end{figure}
\rev{Figure~\ref{ACSVMCSVM} shows that, in almost \SI{75}{\percent} of
  the cases, CS$^3$VM, has $\overline{\AC}$ values larger
than zero, where zero means the same accuracy as the SVM itself. In
the others  \SI{25}{\percent} of the cases, the $\overline{\AC}$ of
CS$^3$VM is slightly smaller than SVM.}

\rev{The second comparison considers only those three approaches that
  actually consider the unlabeled data, i.e., CS$^3$VM, IRCM, and
  WIRCM for all instances. As can be seen in Figure~\ref{ACtru3}, even
  though IRCM does not have an optimality guarantee, it has a better
  relative accuracy~$\widehat{\AC}$ than the hyperplane obtained from
  CS$^3$VM within the time limit. Consequently, as the hyperplane
  obtained from IRCM is used as a warm-start in WIRCM, it also has
  better accuracy.}
\begin{figure}
  \centering
  \includegraphics[width=0.495\textwidth]{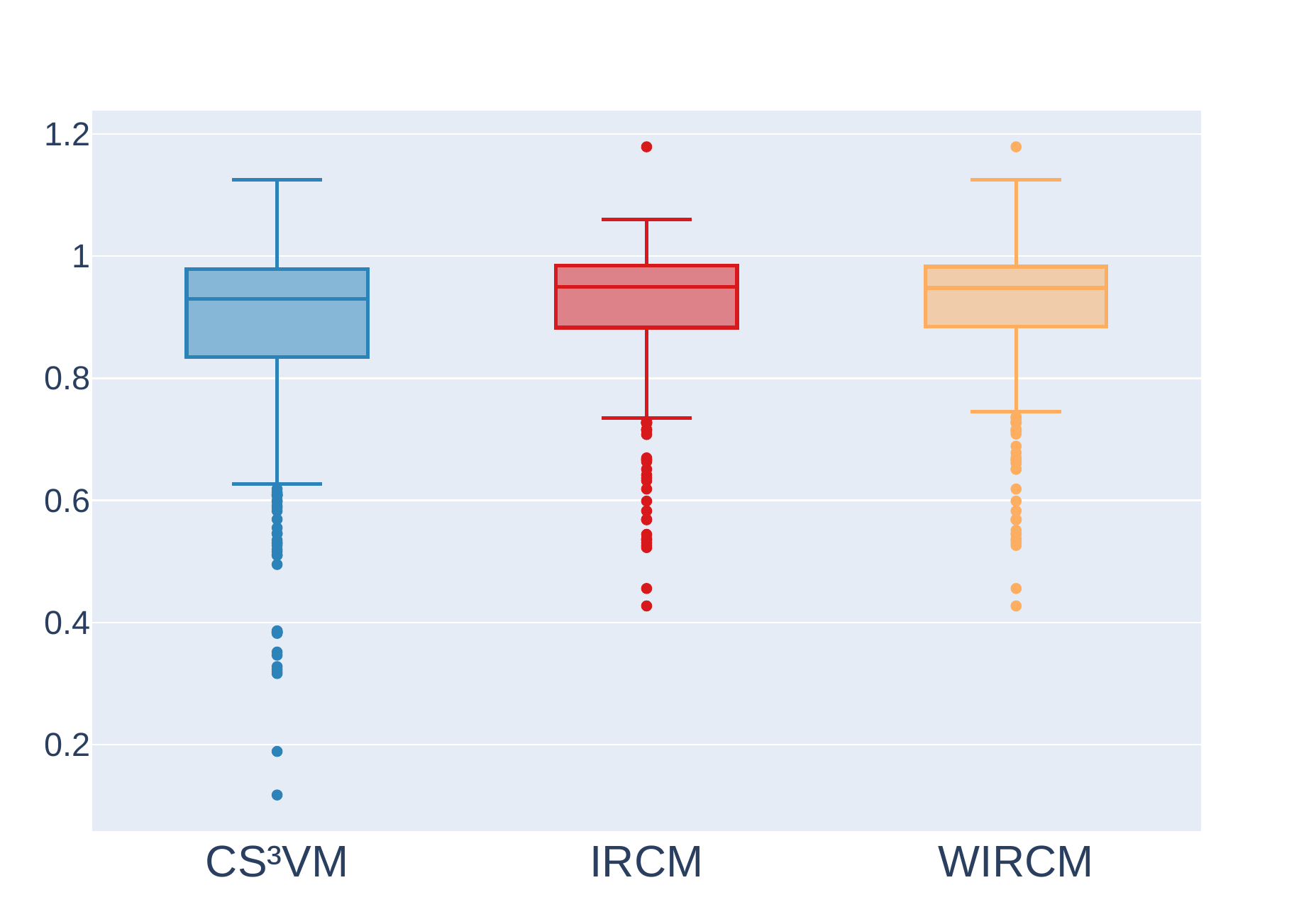}
  \includegraphics[width=0.495\textwidth]{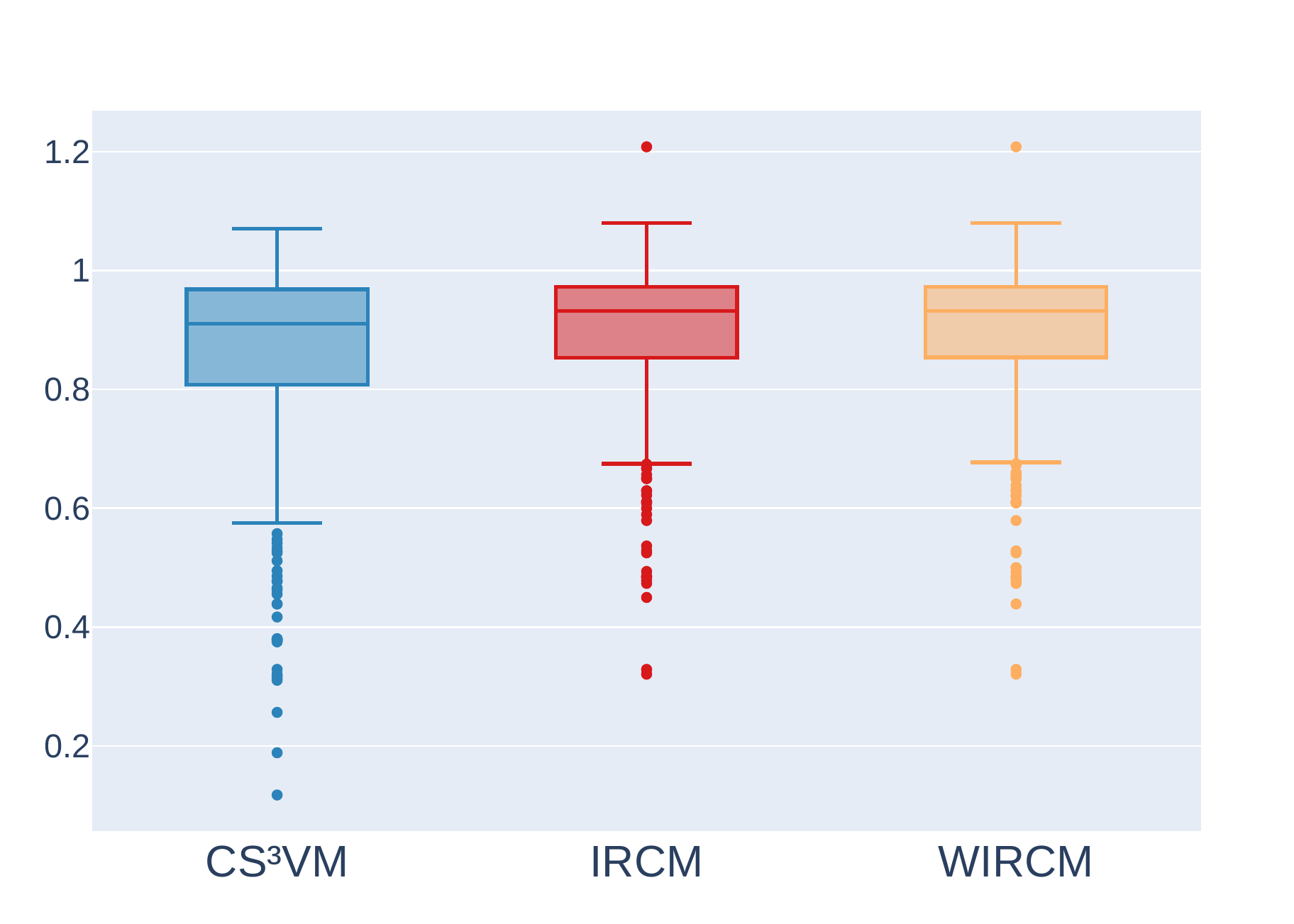}
  \caption{\rev{Relative accuracy $\widehat{\AC}$ w.r.t.\ the true
      hyperplane; see~\eqref{compartrue}.
      Left: Comparison for all data points.
      Right: Comparison only for unlabeled data points.}}
  \label{ACtru3}
\end{figure}
\rev{Figure \ref{ACSV3} shows that, in almost \SI{75}{\percent} of the  cases,
CS$^3$VM, the IRCM, and the WIRCM have $\overline{\AC}$ values larger
than zero. That is,
in general, our methods have greater accuracy than the SVM. Though, some
cases indicate worse $\overline{\AC}$ values for our methods than for
the SVM. This happens because for some instances, the methods (mainly
for CS$^3$VM; see also Figure~\ref{perfomancetime}) do not terminate
within the time limit. Hence, we expect that the number of negative
values will decrease if we would increase the time limit.}
\begin{figure}
  \centering
  \includegraphics[width=0.495\textwidth]{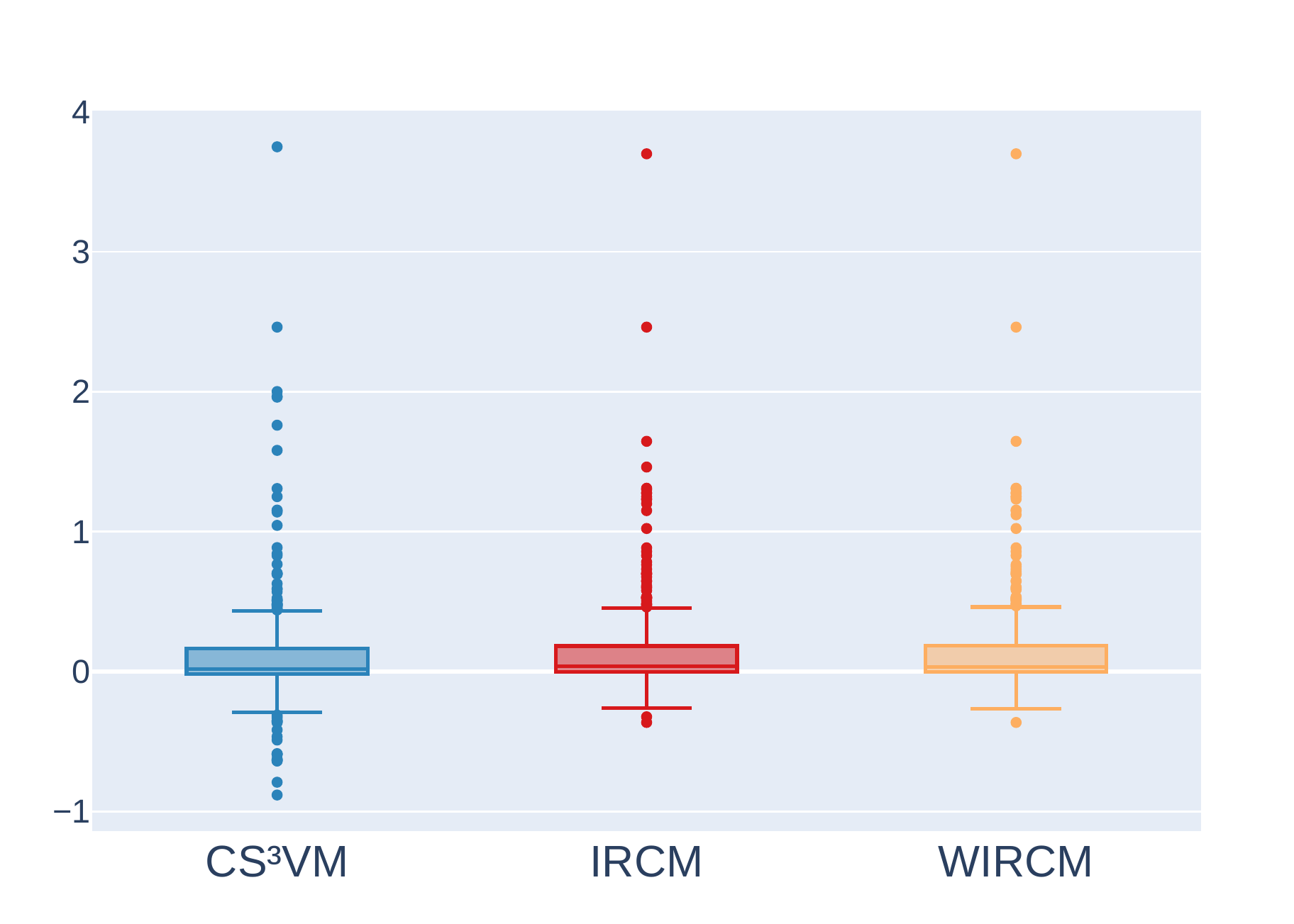}
  \includegraphics[width=0.495\textwidth]{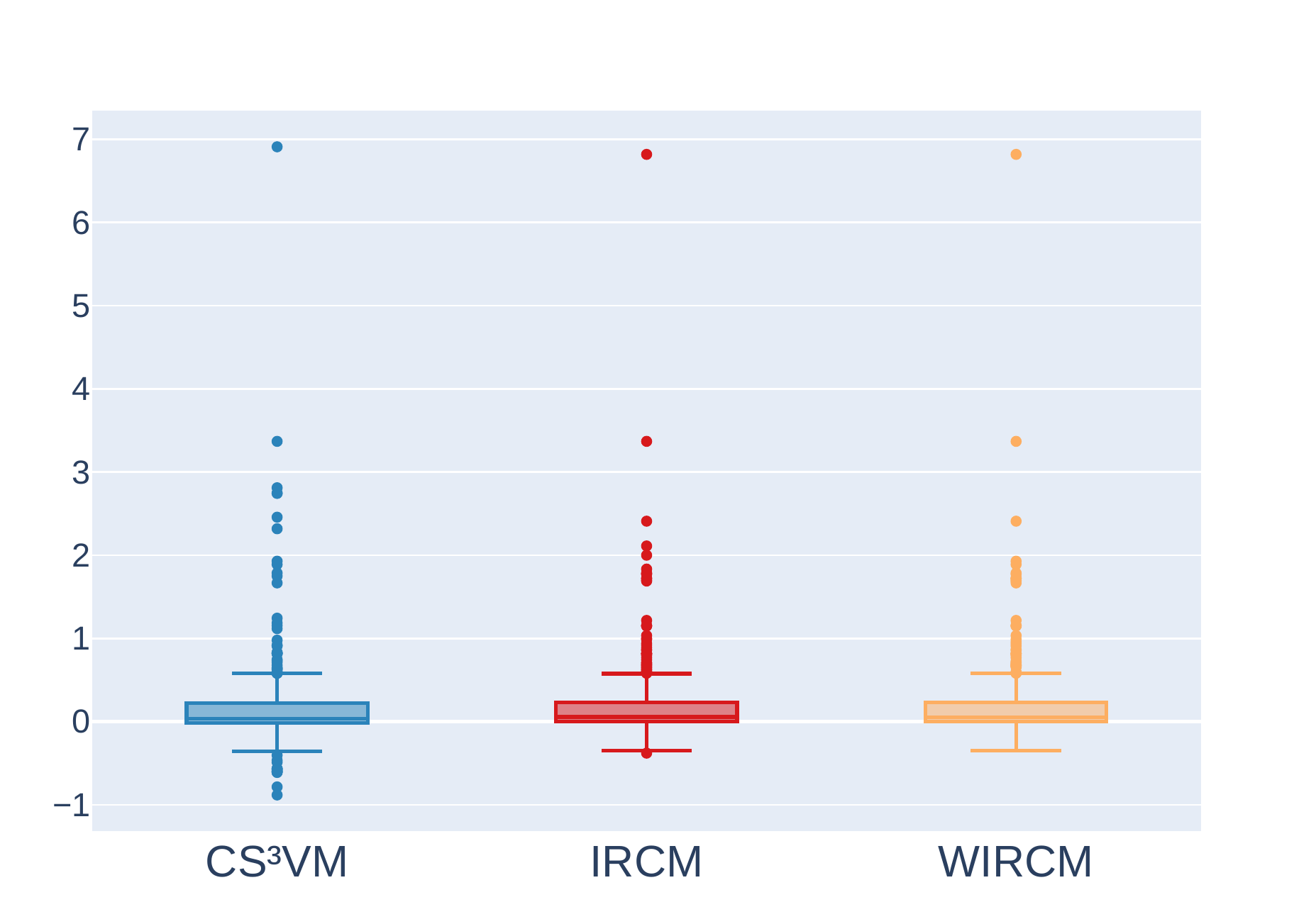}
  \caption{\rev{Accuracy values $\overline{\AC}$ w.r.t.\ the SVM; see
    \eqref{comparSVM} consider all instances.
    Left: Comparison for all data points.
    Right: Comparison only for unlabeled data points.}}
  \label{ACSV3}
\end{figure}

\subsubsection{Precision}
\label{precisionsec}

\rev{We again separate the comparisons as in Section
  \ref{secaccuracy}. Figure \ref{PRtruCSVM} shows that the SVM's
  relative precision $\widehat{\PR}$ is lower than the relative
  precision of CS$^3$VM. This means that CS$^3$VM re-produces the
  classification of the true hyperplane with higher precision than the
  original SVM. Hence, SVM has more false-positive results.
  This happens because the biased sample is more likely to have
  positively labeled data and due to having no information about the
  unlabeled data, the SVM ends up classifying points on the positive
  side. As can be seen in Figure~\ref{PRSVMCSVM}, CS$^3$VM has
  slightly higher $\overline{\PR}$ values than~0, which is the
  baseline here that refers to the SVM itself.
  This means, CS$^3$VM is slightly more precise than the SVM.}

\begin{figure}
  \centering
  \includegraphics[width=0.495\textwidth]{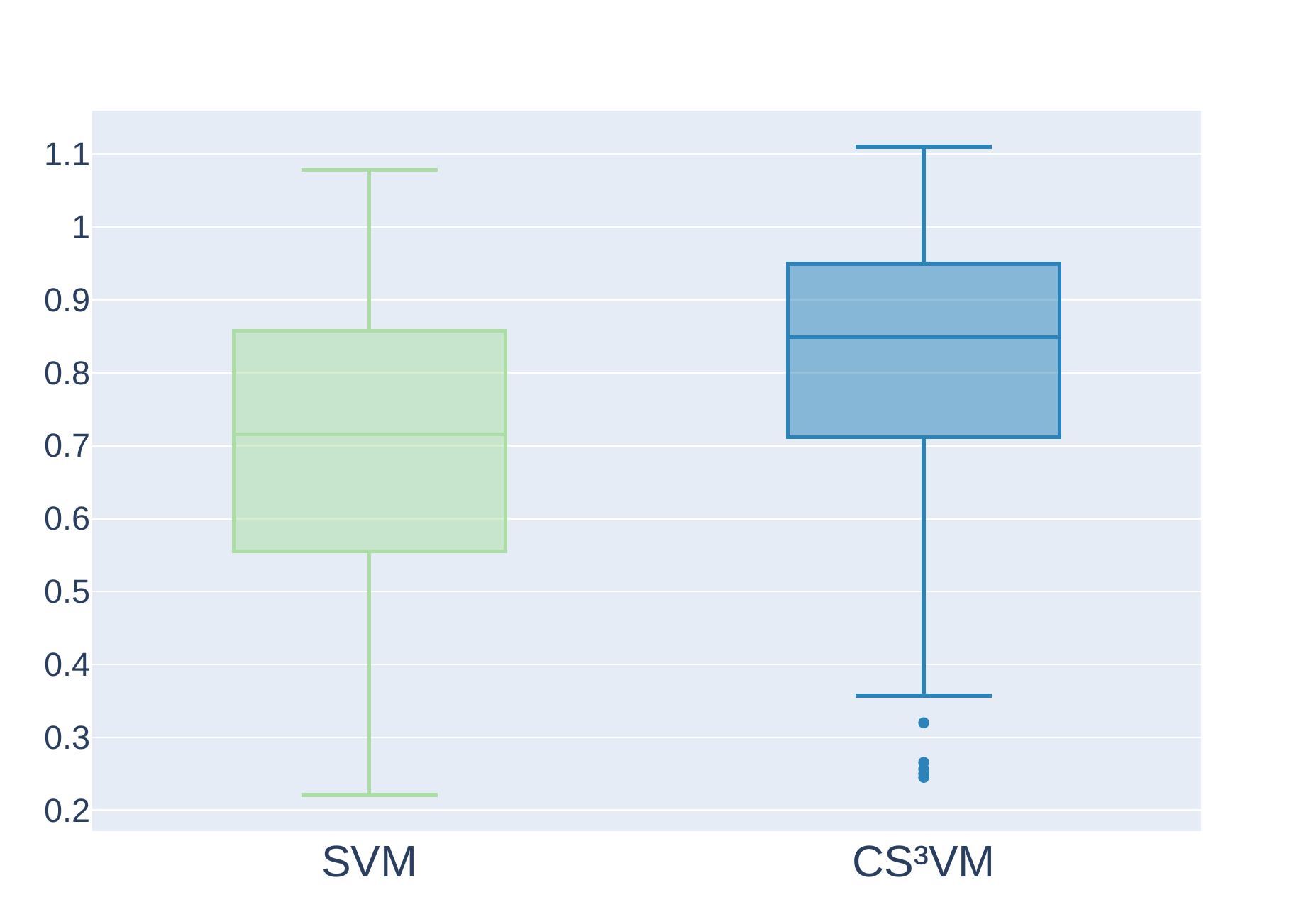}
  \includegraphics[width=0.495\textwidth]{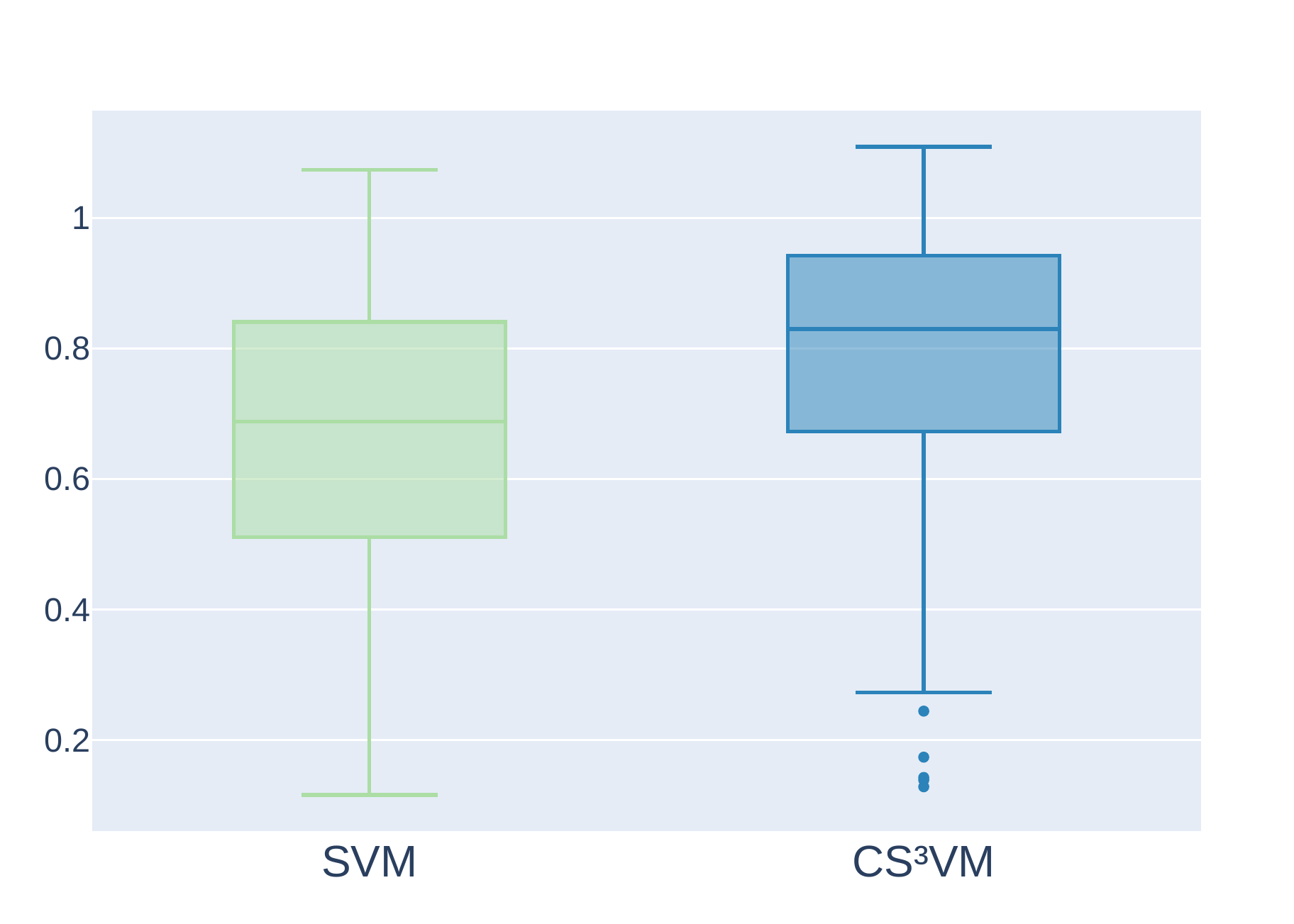}
  \caption{\rev{Relative precision $\widehat{\PR}$ w.r.t.\ the true
      hyperplane as; see \eqref{compartrue}.
      Only those instances are considered for which CS$^3$VM terminated.
      Left: Comparison for all data points.
      Right: Comparison only for unlabeled data points.}}
  \label{PRtruCSVM}
\end{figure}

\begin{figure}
  \centering
  \includegraphics[width=0.495\textwidth]{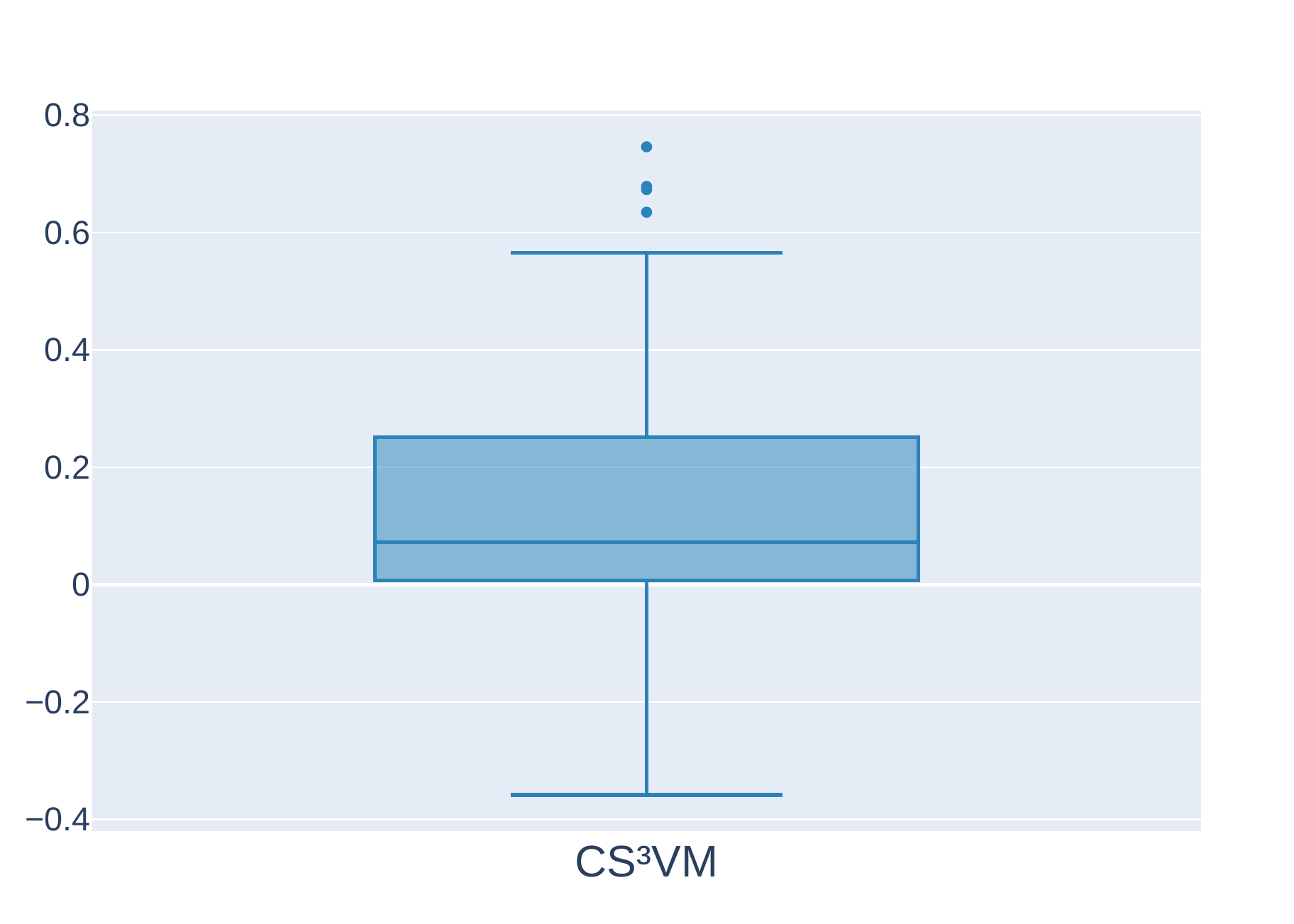}
  \includegraphics[width=0.495\textwidth]{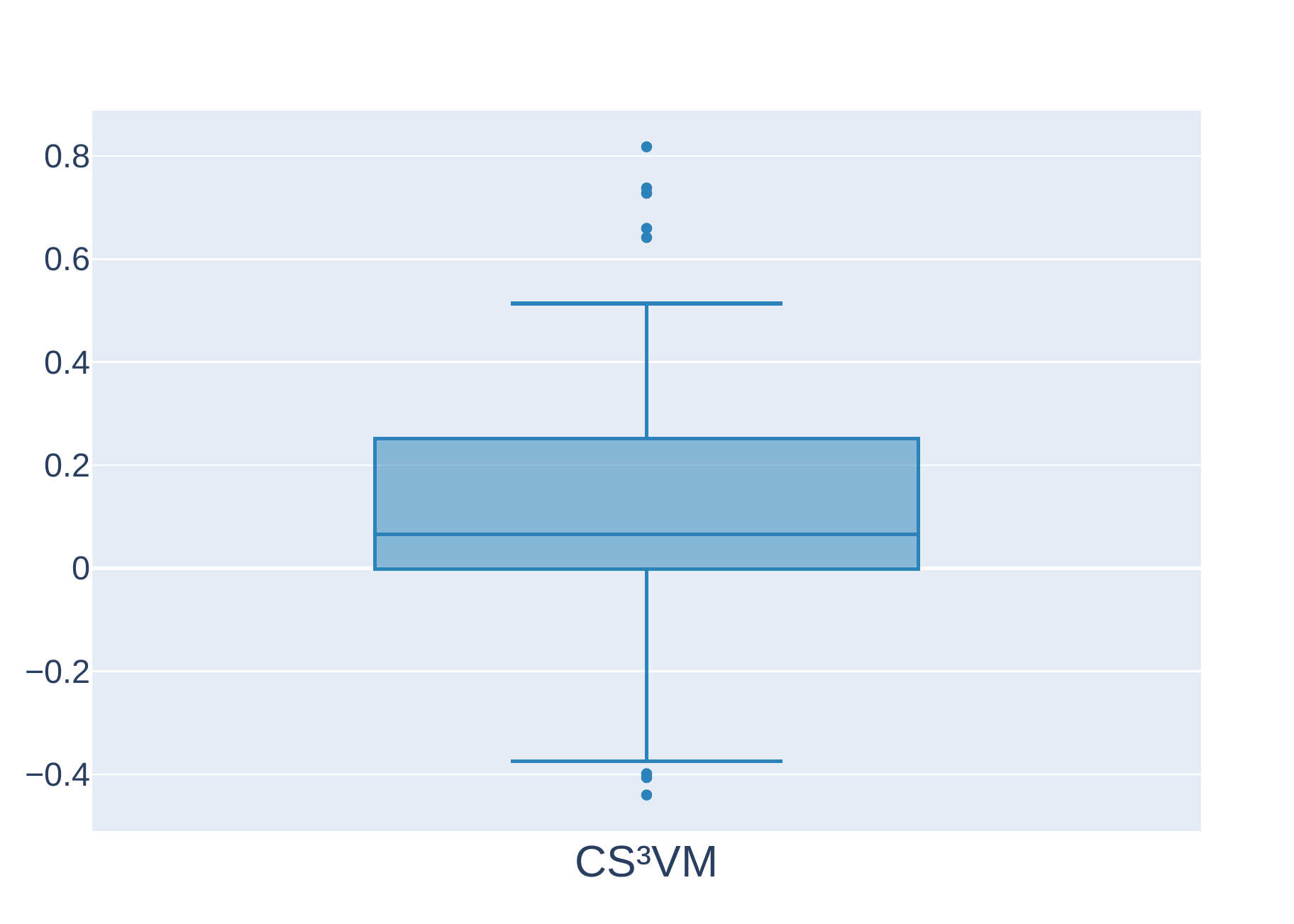}
  \caption{\rev{Precision values $\overline{\PR}$ w.r.t.\ the SVM; see
      \eqref{comparSVM}.
      Only those instances are considered for which CS$^3$VM terminated.
      Left: Comparison for all data points.
      Right: Comparison only for unlabeled data points.}}
  \label{PRSVMCSVM}
\end{figure}

\begin{figure}
  \centering
  \includegraphics[width=0.495\textwidth]{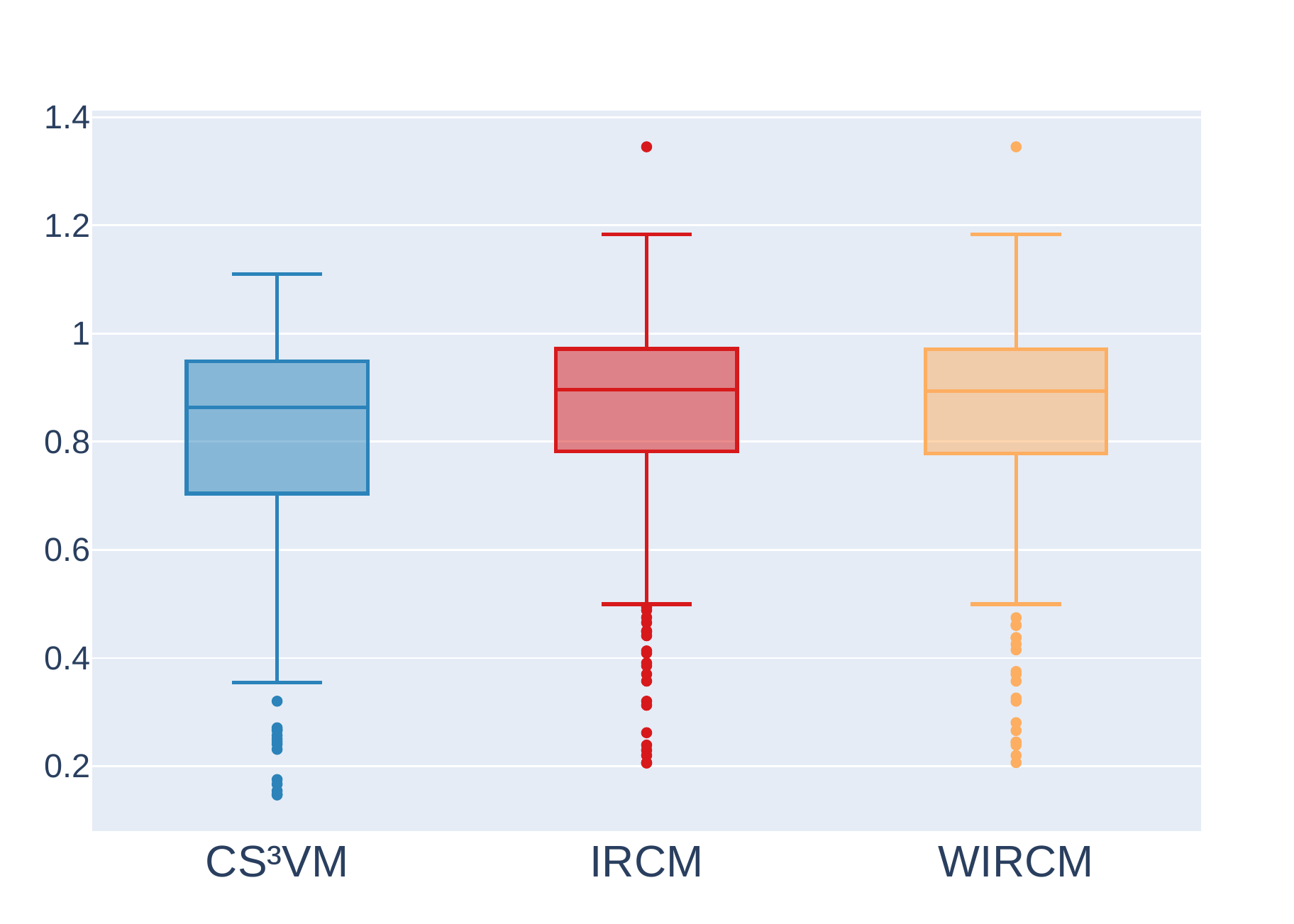}
  \includegraphics[width=0.495\textwidth]{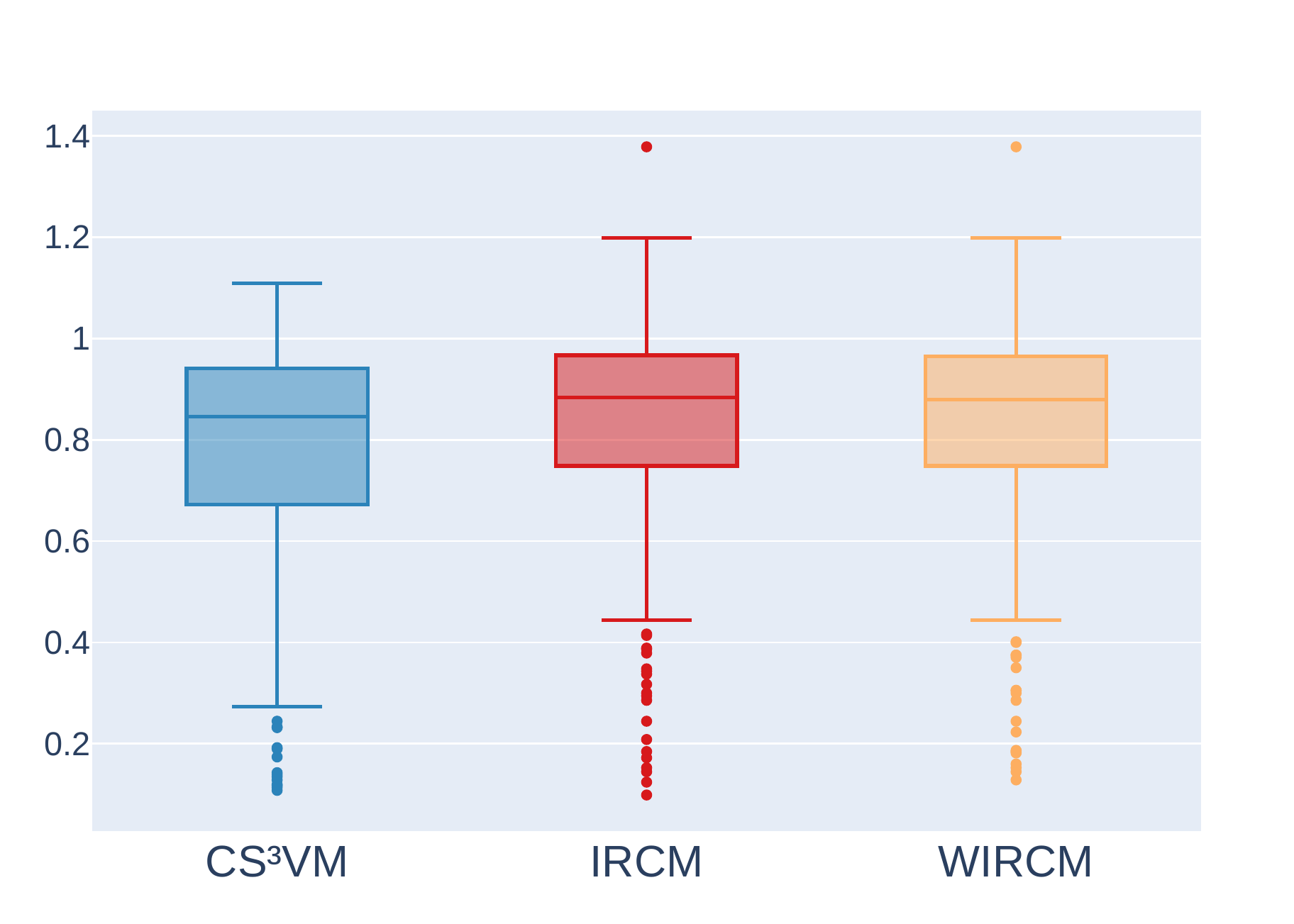}
  \caption{\rev{Relative precision $\widehat{\PR}$ w.r.t.\ the true
      hyperplane as; see \eqref{compartrue}.
      Left: Comparison for all data points.
      Right: Comparison only for unlabeled data points.}}
  \label{PRtru3}
\end{figure}

\begin{figure}
  \centering
  \includegraphics[width=0.495\textwidth]{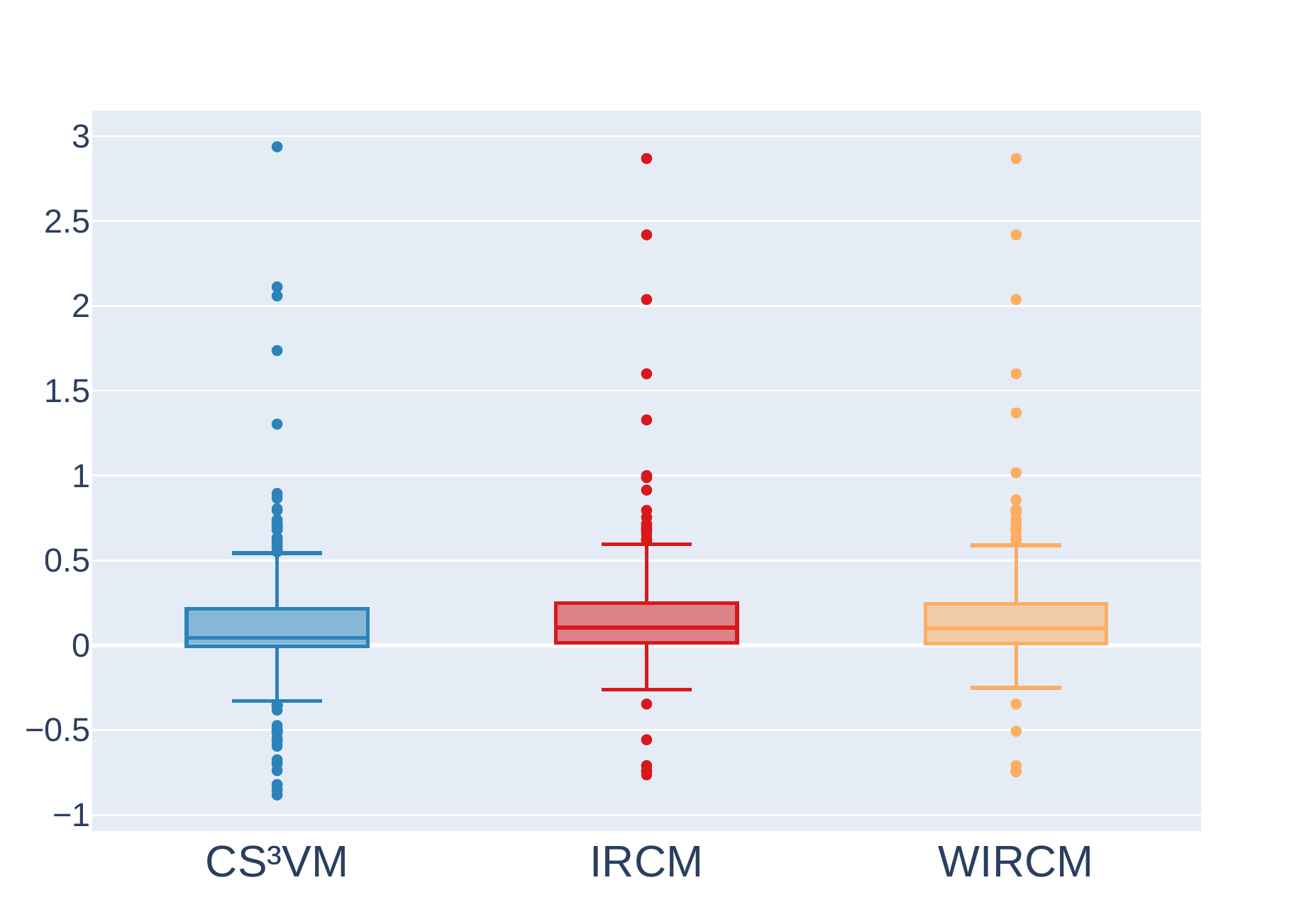}
  \includegraphics[width=0.495\textwidth]{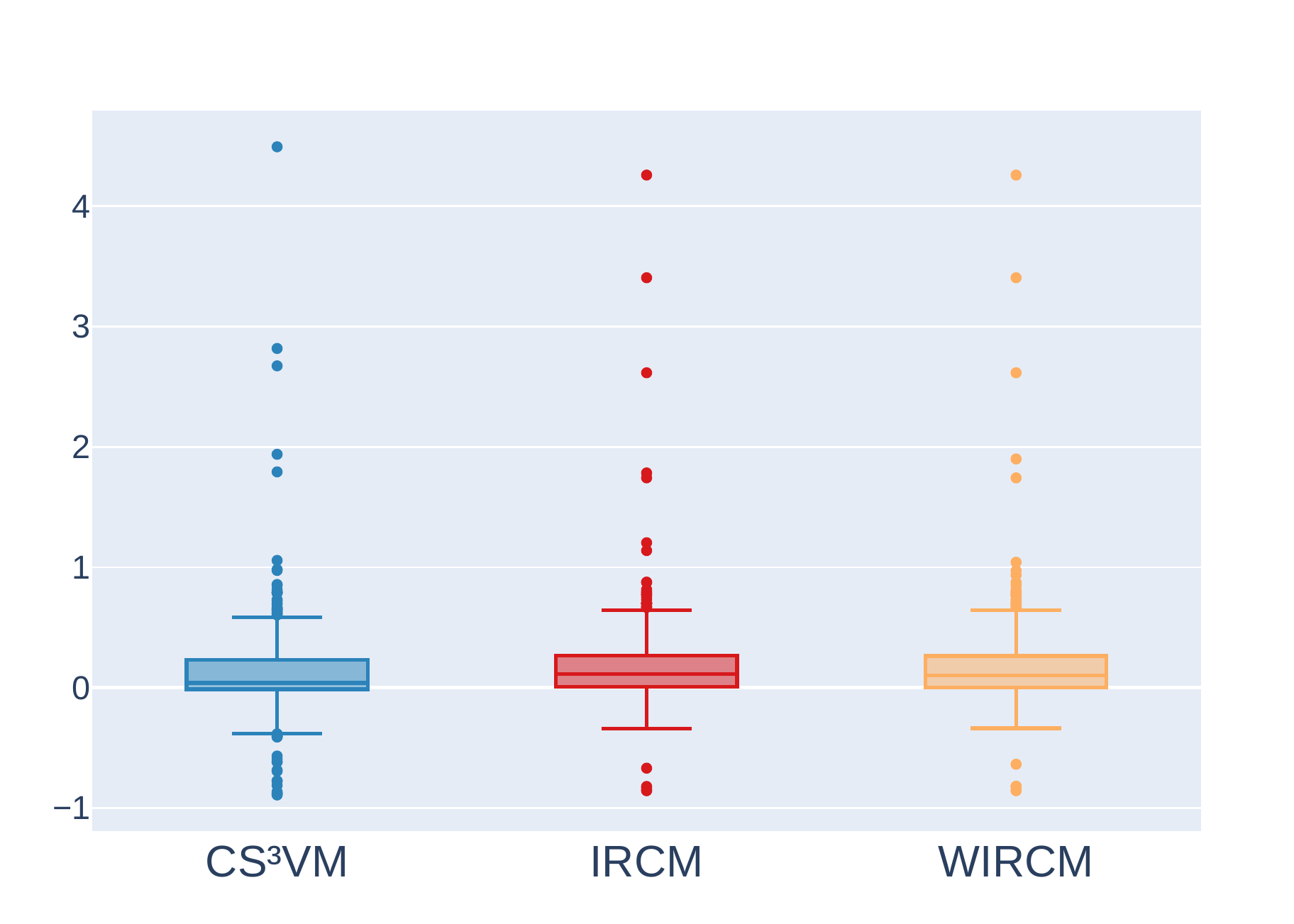}
  \caption{\rev{Precision values $\overline{\PR}$ w.r.t.\ the SVM; see
      \eqref{comparSVM}.
      Left: Comparison for all data points.
      Right: Comparison only for unlabeled data points.}}
  \label{PRSVM3}
\end{figure}

\rev{Figure \ref{PRtru3} shows that the $\widehat{\PR}$ values of the
  IRCM and the WIRCM are less spread than the ones of CS$^3$VM.
  The reason most likely is that the CS$^3$VM approach terminates on
  fewer instances than the IRCM and the WIRCM. As can be seen in
  Figure \ref{PRSVM3}, the IRCM and the WIRCM also have slightly
  higher $\overline{\PR}$ values than~0.
  This means that our methods are slightly more precise than the SVM.
  The negative outliers most likely are due to the same reason as
  those for the respective accuracy values.}


\section{Conclusion}
\label{sec:conclusion}

For many classification problems, it can be costly to obtain labels for
the entire population of interest.
However, aggregate information on how many points are in each class
can be available from external sources.
For this situation, we proposed a semi-supervised SVM that can be
modeled via a big-$M$-based MIQP formulation.
We also presented a rule for updating the big-$M$ in an iterative
re-clustering method and derived further computational techniques such
as tailored dimension reduction and warm-starting to reduce the
computational cost.

In case of simple random samples, our proposed semi-supervised methods
perform as good as the classic SVM approach. However, in many
applications, the available data is coming from non-probability
samples. Hence, there is the risk of obtaining biased samples. Our
numerical study shows that our approaches have better accuracy and
precision than the original SVM formulation in this setting.

The problem of considering a cardinality constraint is
computationally challenging. Our proposed clustering approach
significantly helps to decrease the run time and to find an objective
function value that is very close to the optimal value.
Besides that, the clustering approach maintains the same accuracy and
precision as the MIQP formulation.
Moreover, using the clustering approach as a warm-start and fixing some
unlabeled points on one side of the hyperplane helps to improve the
quality of the objective function value again.
Hence, the newly proposed methods lead to a significant improvement
compared to just solving the classic MIQP formulation using a standard
solver.

\rev{Despite these contributions, there is still room for improvement
  and future work.
  First, we  only considered the linear SVM kernel.
  For future work, the development of methods for other kernels, such
  as a Gaussian kernel, can be a valuable topic.
  Moreover, the use of other norms than the 2-norm could be analyzed
  as well and the formal hardness of the considered problem should be
  settled.
  Finally, the adaptation of our approaches for multiclass SVMs using
  a one-vs.-rest strategy may be another reasonable future work.}


\section*{Acknowledgements}

The authors thank the DFG for their support within
RTG~2126 \enquote{Algorithmic Optimization}.


\printbibliography

\appendix
\section{Detailed Information on the Instances}
\label{sec:deta-inform-inst}

\begin{center}
  \begin{longtable}{l c c c }
    \caption{Overview over the entire test set with the number of
      points ($N$) and the dimension ($d$)}
    \label{table1}\\
    \toprule
    ID & Instance  &$ N $ &$d$ \\
    \midrule
    1                         & prnn\_synth         & 250          & 2   \\
    $2^*$   &  analcatdata\_asbestos        &  73    &  3   \\
    $3^*$    &  lupus                        &  87    &  3  \\
    4     &  analcatdata\_boxing1         &  120   &  3  \\
    5       &  analcatdata\_boxing2         &  132   &  3   \\
    6        &  haberman     &  289   &  3     \\
    7        &  analcatdata\_happiness       &  60    &  3  \\
    $8^*$     &  analcatdata\_aids            &  50    &  4  \\
    9  &  analcatdata\_lawsuit         &  263   &  4  \\
    10     &  iris       &  147   &  4 \\
    11          &  hayes\_roth        &  93    &  4 \\
    12       &  balance\_scale               &  625   &  4 \\
    13       &  parity5     &  32    &  5  \\
    $14^*$        &  bupa                         &  341   &  5  \\
    15        &  irish                        &  470   &  5 \\
    16         &  phoneme       &  5349  &  5  \\
    17                        &  tae                          &  110   &  5  \\
    18                        &  new\_thyroid                 &  215   &  5  \\
    $19^*$                        &  analcatdata\_bankruptcy      &  50    &  6 \\
    $20^*$                        &  analcatdata\_creditscore     &  100   &  6  \\
    21                        &  mux6       &  64    &  6 \\
    22                        &  monk3                        &  357   &  6 \\
    23                        &  monk1                        &  432   &  6 \\
    24                        &  monk2                        &  432   &  6 \\
    25                        &  appendicitis                 &  106   &  7  \\
    26                        &  prnn\_crabs                  &  200   &  7  \\
    $27^*$ &  penguins                     &  333   &  7  \\
    28 &  postoperative\_patient\_data &  78    &  8  \\
    $29^*$                        &  biomed                       &  209   &  8  \\
    $30^*$                        &  pima                         &  768   &  8  \\
    $31^*$                        &  cars                         &  392   &  8  \\
    32                        &  analcatdata\_japansolvent    &  52    &  9  \\
    33                        &  glass2                       &  162   &  9  \\
    34                        &  breast\_cancer               &  272   &  9  \\
    35                        &  saheart                      &  462   &  9  \\
    36                        &  threeOf9                     &  512   &  9 \\
    37                        &  profb                        &  672   &  9  \\
    38                        &  breast\_w                    &  463   &  9 \\
    39                        &  tic\_tac\_toe                &  958   &  9  \\
    40                        &  xd6                          &  512   &  9  \\
    41                        &  cmc                          &  1425  &  9  \\
    $42$ &  analcatdata\_cyyoung9302     &  92    &  10\\
    $43$ &  analcatdata\_cyyoung8092     &  97    &  10 \\
    44 &  breast                       &  691   &  10 \\
    45                        &  flare                        &  315   &  10 \\
    46                        &  parity5+5                    &  1024  &  10\\
    $47$                        &  magic                        &  18905 &  10 \\
    48                        &  analcatdata\_fraud           &  42    &  11 \\
    $49$                        &  heart\_statlog               &  270   &  13 \\
    50                        & heart\_h                                                                                    & 293                          & 13  \\
    51 &  hungarian                             &  293   &  13   \\
    $52^*$ &  cleve           &  302   &  13   \\
    $53^*$ &  heart\_c         &  302   &  13  \\
    54 &  wine\_recognition           &  178   &  13   \\
    $55^*$ &  australian               &  690   &  14 \\
    $56^*$ &  adult   &  48790 &  14   \\
    $57^*$ &  schizo    &  340   &  14   \\
    $58^*$ &  buggyCrx    &  690   &  15   \\
    59 &  labor        &  57    &  16   \\
    60 &  house\_votes\_84           &  342   &  16   \\
    61 &  hepatitis             &  155   &  19   \\
    $62^*$ &  credit\_g                    &  1000  &  20   \\
    63 &  gametes\_e\_0.1H             &  1599  &  20   \\
    64 &  gametes\_e\_0.4H                &  1600  &  20   \\
    65 &  gametes\_e\_0.2H               &  1600  &  20   \\
    66 &  gametes\_h\_50 &  1592  &  20   \\
    67 &  gametes\_h\_75 &  1599  &  20   \\
    $68^*$ &  churn      &  5000  &  20   \\
    $69^*$ &  ring        &  7400  &  20   \\
    70 &  twonorm         &  7400  &  20   \\
    71 &  waveform\_21        &  5000  &  21   \\
    72 &  ann\_thyroid            &  7129  &  21   \\
    73 &  spect   &  228   &  22   \\
    74 &  horse\_colic            &  357   &  22   \\
    75 &  agaricus\_lepiota         &  8124  &  22   \\
    $76^*$ &  hypothyroid              &  3086  &  25   \\
    $77^*$ &  dis                  &  3711  &  29   \\
    $78^*$ &  allhypo      &  3709  &  29   \\
    $79^*$ &  allbp                                       &  3711  &  29   \\
    $80^*$ &  breast\_cancer\_wisconsin           &  569   &  30   \\
    81 &  backache &  180   &  32   \\
    82 &  ionosphere        &  351   &  34   \\
    83 &  chess        &  3196  &  36   \\
    84 &  waveform\_40    &  5000  &  40   \\
    85 &  connect\_4                   &  67557 &  42   \\
    86 &  spectf      &  267   &  44   \\
    $87^*$ &  tokyo1      &  959   &  44   \\
    88 &  molecular\_biology\_promoters       &  106   &  57   \\
    $89^*$ &  spambase            &  4210  &  57   \\
    90 &  sonar         &  208   &  60   \\
    91 &  splice        &  2903  &  60   \\
    92 &  coil2000       &  8380  &  85   \\
    $93^*$ &  Hill\_Valley\_without\_noise     &  1212  &  100  \\
    $94^*$ &  clean1        &  476   &  168  \\
    $95^*$ &  clean2                  &  6598  &  168  \\
    96 &  dna                         &  3002  &  180  \\
    97 &  gametes\_e\_1000atts       &  1600  &  1000 \\
    \bottomrule
  \end{longtable}
\end{center}



\section{Further Numerical Results}
\label{sec:furth-numer-results}

Besides the measures of accuracy and precision, we compare two further
measures in this section.
First, recall ($\RE$) measures the percentage of points with positive
label that are actually classified as positive.
It is formally given by
\begin{equation}
  \label{recall}
  \RE \define \frac{\TP}{\TP+\FN}.
\end{equation}
Note that for applications such as cancer diagnosis, it is relevant to
evaluate recall because it is more important to flag cancer rather
than to do not.
Also in cases of rare positive labels, recall is often the favored
metric.
Note that values close to 1 indicate a better classification here.

Second, we also compare the false positive rate ($\FPR$), which
measures the probability of points with negative labels being
classified as positive:
\begin{equation}
  \label{fpr}
  \FPR \define \frac{\FP}{\TN+\FP}.
\end{equation}
This quantity is important in some applications such as quality control,
where a false positive can cause more issues than a false negative.
Note that for $\FPR$, the lower the value, the better the
classification.

The main comparison in terms of recall and false positive rate is
w.r.t.\ the ``true hyperplane'', i.e., the solution of
Problem~\eqref{l2svm} on the complete data with all $N$ points and all
labels available.
The main question is how close the recall and false positive rate is
to the one of the true hyperplane.
Hence, we compute the ratios of the recall and false positive rate
according to
\begin{equation}
  \label{compartrue2}
  \widehat{\RE} \define \frac{\RE}{\RE_{\true}},
  \quad
  \widehat{\FPR} \define \frac{\FPR}{\FPR_{\true}},
\end{equation}
where $\RE_{\true}$ and $\FPR_{\true}$ are computed as
in~\eqref{recall} and~\eqref{fpr} for the true hyperplane.

As can be seen in Figure~\ref{REtru}, the SVM's relative recall is a
little bit larger than the one of the other methods. As in
Section~\ref{precisionsec}, this happens because the biased sample is
more likely to have positive labeled data and having no information about
the unlabeled data, the SVM ends up classifying points on the positive side.
\begin{figure}
  \centering
  \includegraphics[width=0.495\textwidth]{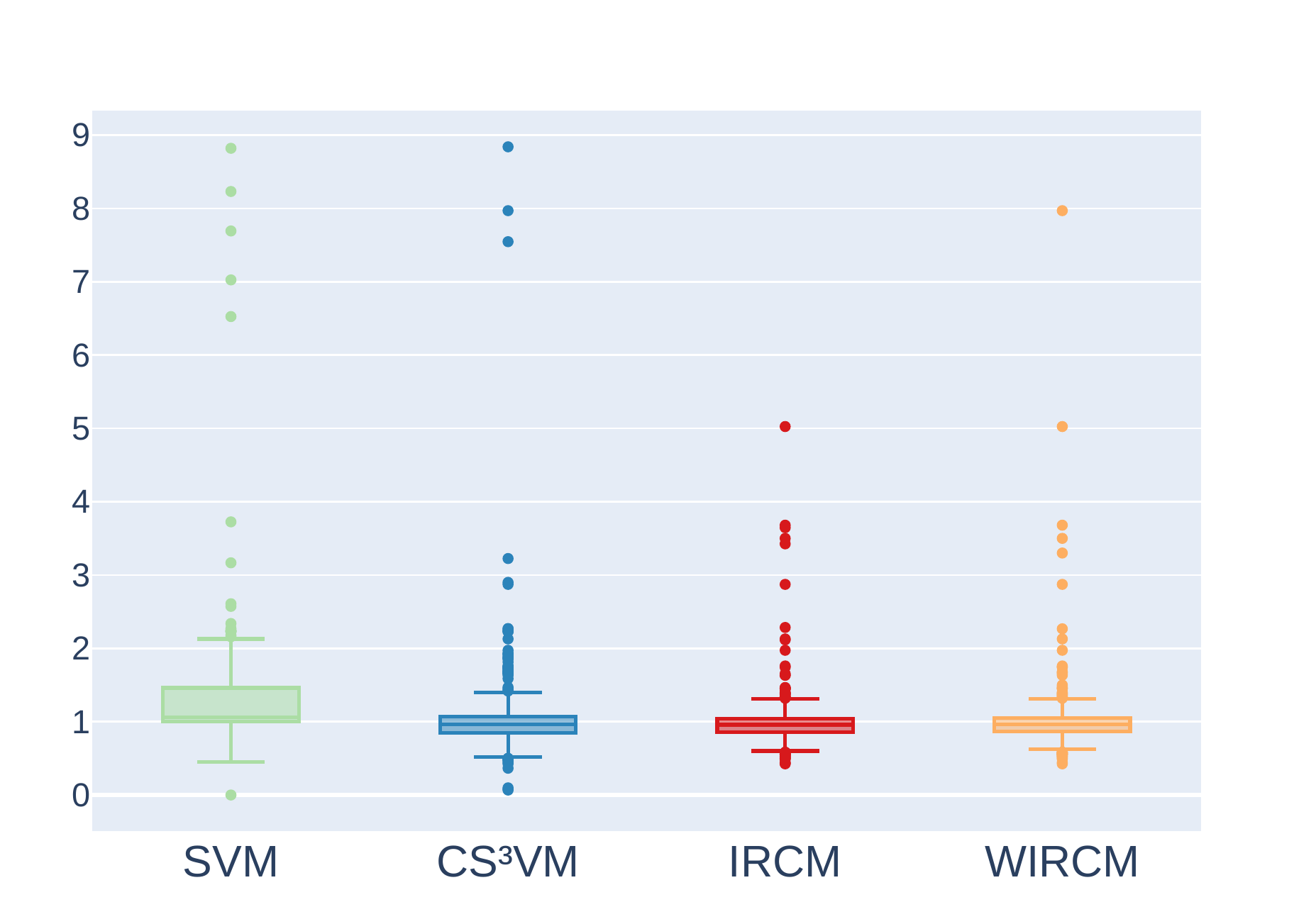}
  \includegraphics[width=0.495\textwidth]{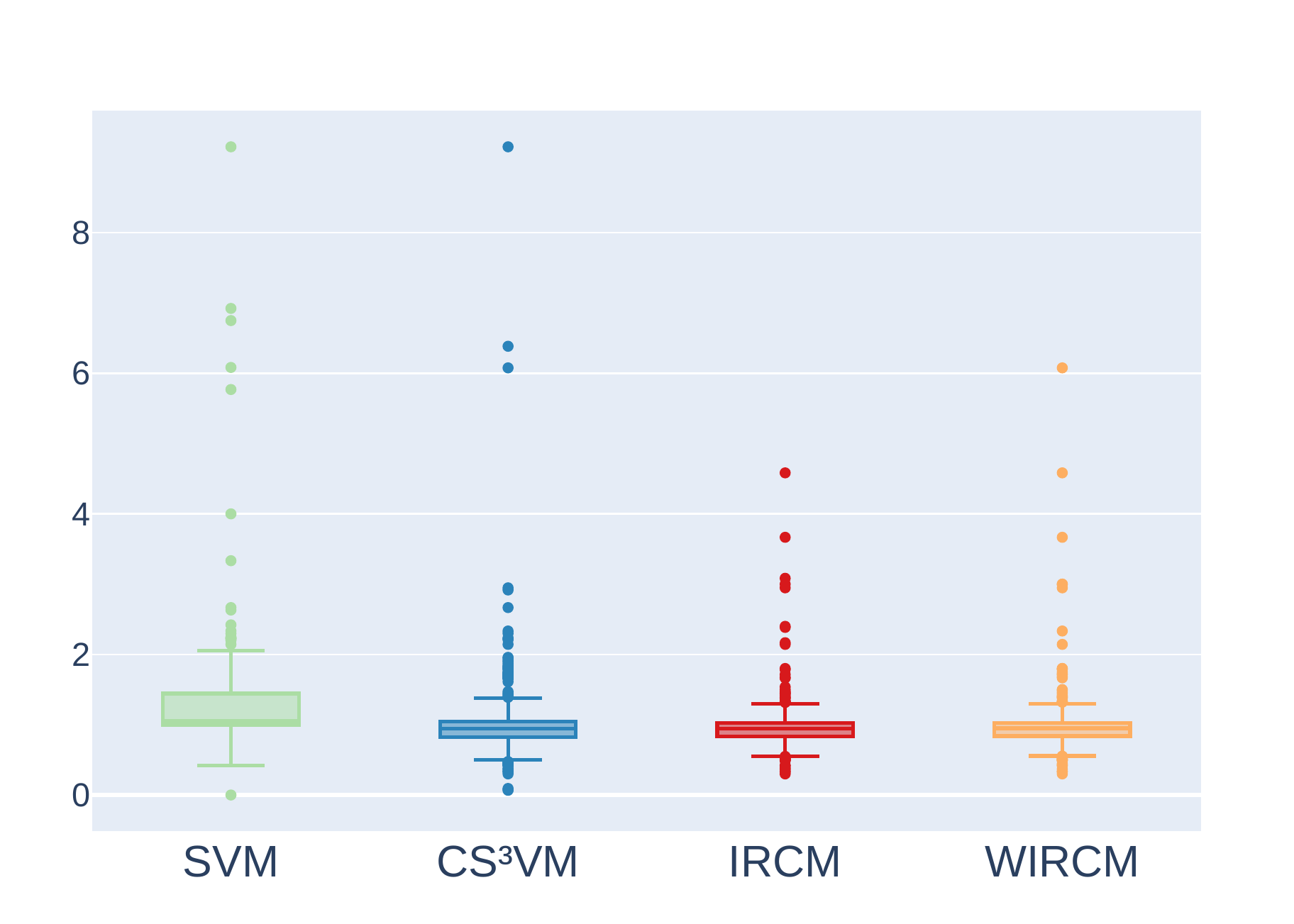}
  \caption{Relative recall $\widehat{\RE}$ w.r.t.\ the true
    hyperplane; see \eqref{compartrue2}.
    Left: Comparison for all data points.
    Right: Comparison only for unlabeled data points.}
  \label{REtru}
\end{figure}

Figure~\ref{FPRtru} shows that CS$^3$VM, the IRCM, and the WIRCM have
lower $\widehat{\FPR}$ values than the original SVM.
This means that the newly proposed methods have a lower false positive
rate than the original SVM.
The fact that CS$^3$VM terminates for less instances than the
IRCM explains why the IRCM has a lower relative false positive rate
than CS$^3$VM.
Finally, since the WIRCM uses the IRCM for warm-starting, the WIRCM
also has better relative false positive rates than CS$^3$VM.
\begin{figure}
  \centering
  \includegraphics[width=0.495\textwidth]{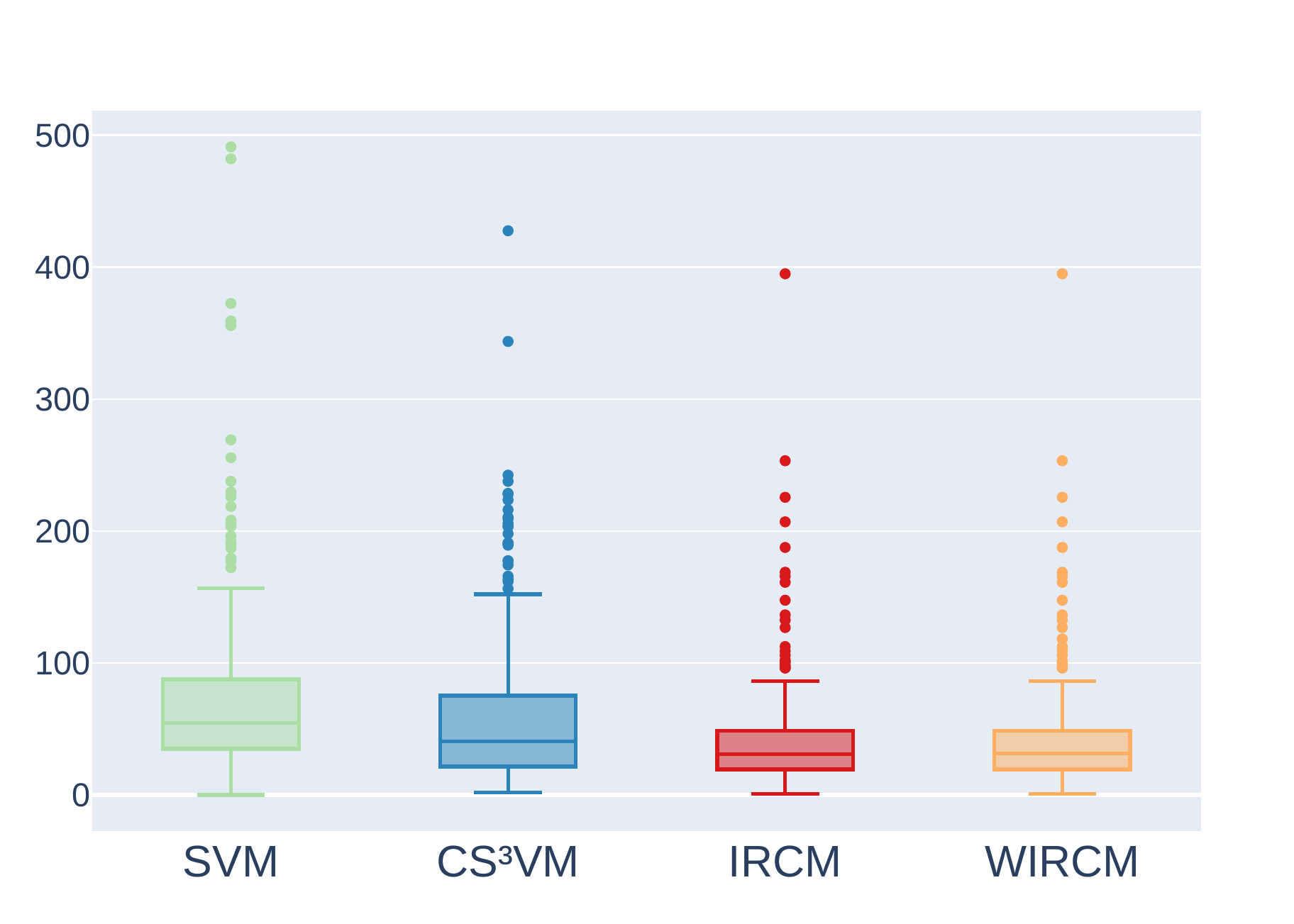}
  \includegraphics[width=0.495\textwidth]{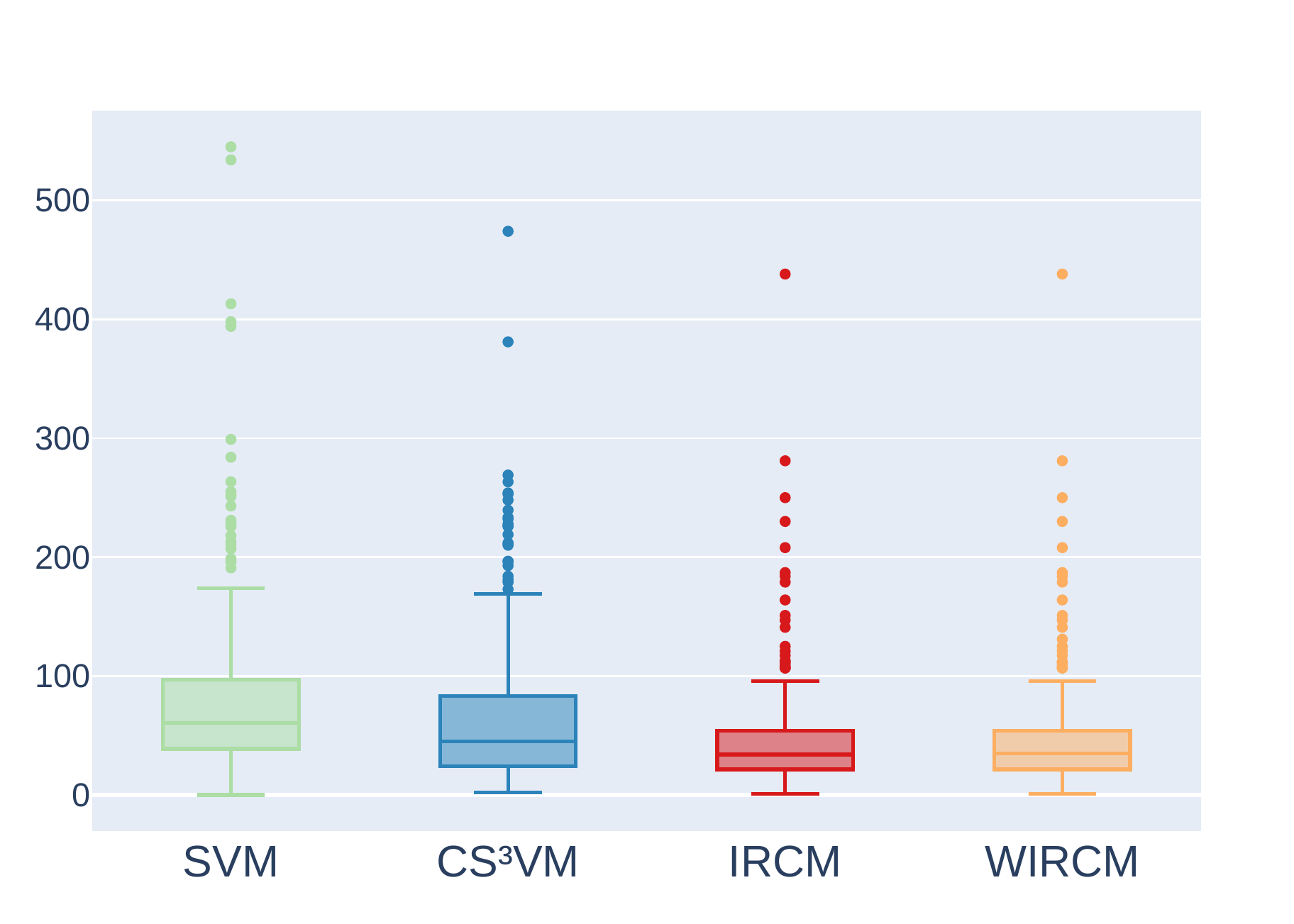}
  \caption{Relative false positive rate $\widehat{\FPR}$ w.r.t.\ the true
    hyperplane; see \eqref{compartrue2}.
    Left: Comparison for all data points.
    Right: Comparison only for unlabeled data points.}
  \label{FPRtru}
\end{figure}


\section{Numerical Results for Simple random samples}
\label{sec:num-results-simple-sample}

In Section~\ref{sec:numerical-results}, we focused our
computational study on non-representative, biased samples.
The common baseline scenario to check the performance of estimators is
to apply them on simple random samples. Hence, for completeness, we
also present the results under simple random sampling. That is, each
unit in the data set has the same probability $\pi_i = n/N$ to be
included into the sample of size $n$.
The instances are the same as described in
Section~\ref{subsection-test-sets}.
The computational setup follows the description in
Section~\ref{subsection-comp-setup}.
As before, the used evaluation criteria are
$\widehat{\AC},$ $\widehat{\PR}$ as in~\eqref{compartrue}
and $\widehat{\RE},$ $\widehat{\FPR}$ as in~\eqref{compartrue2}.

\begin{figure}
  \centering
  \includegraphics[width=0.495\textwidth]{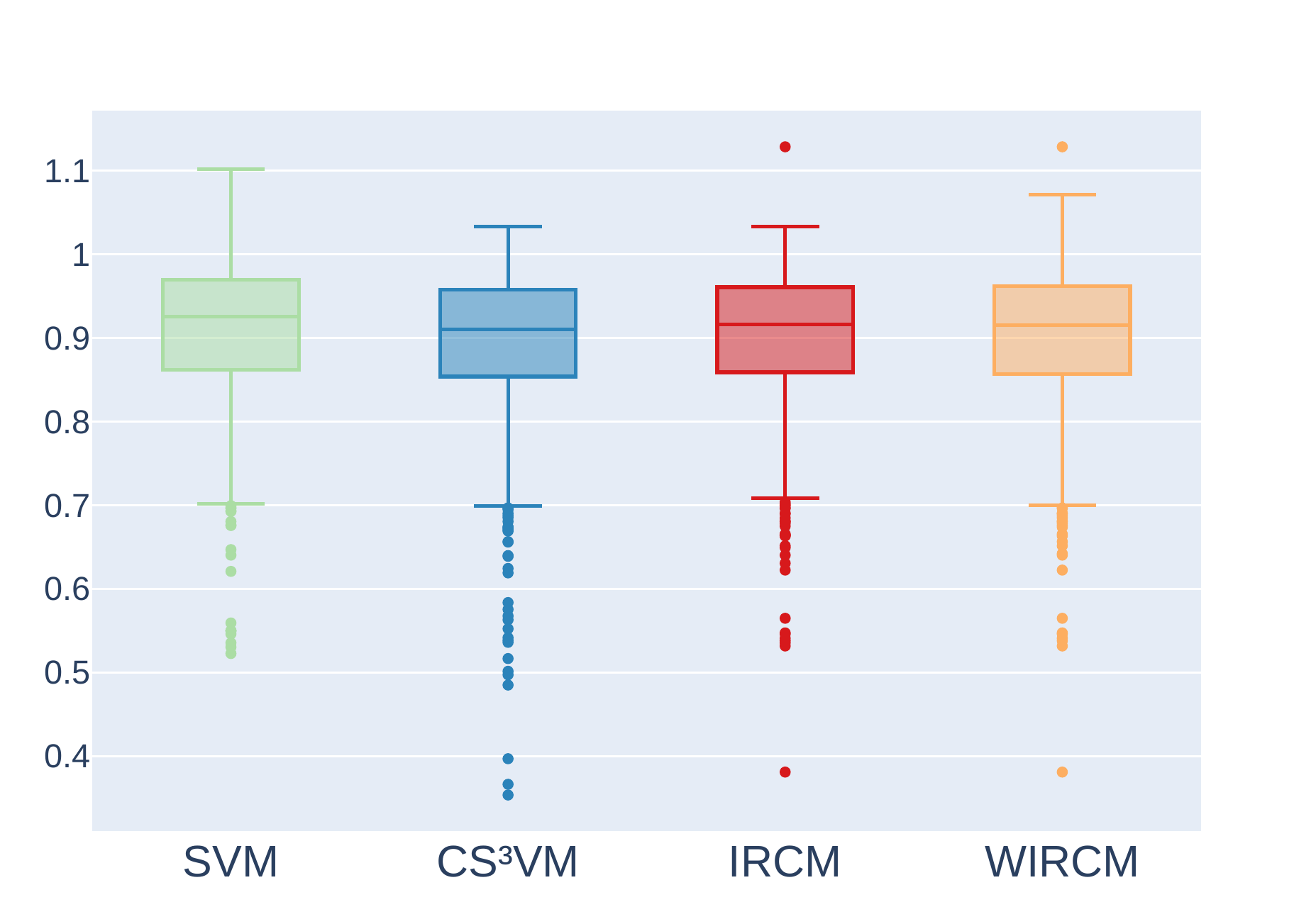}
  \includegraphics[width=0.495\textwidth]{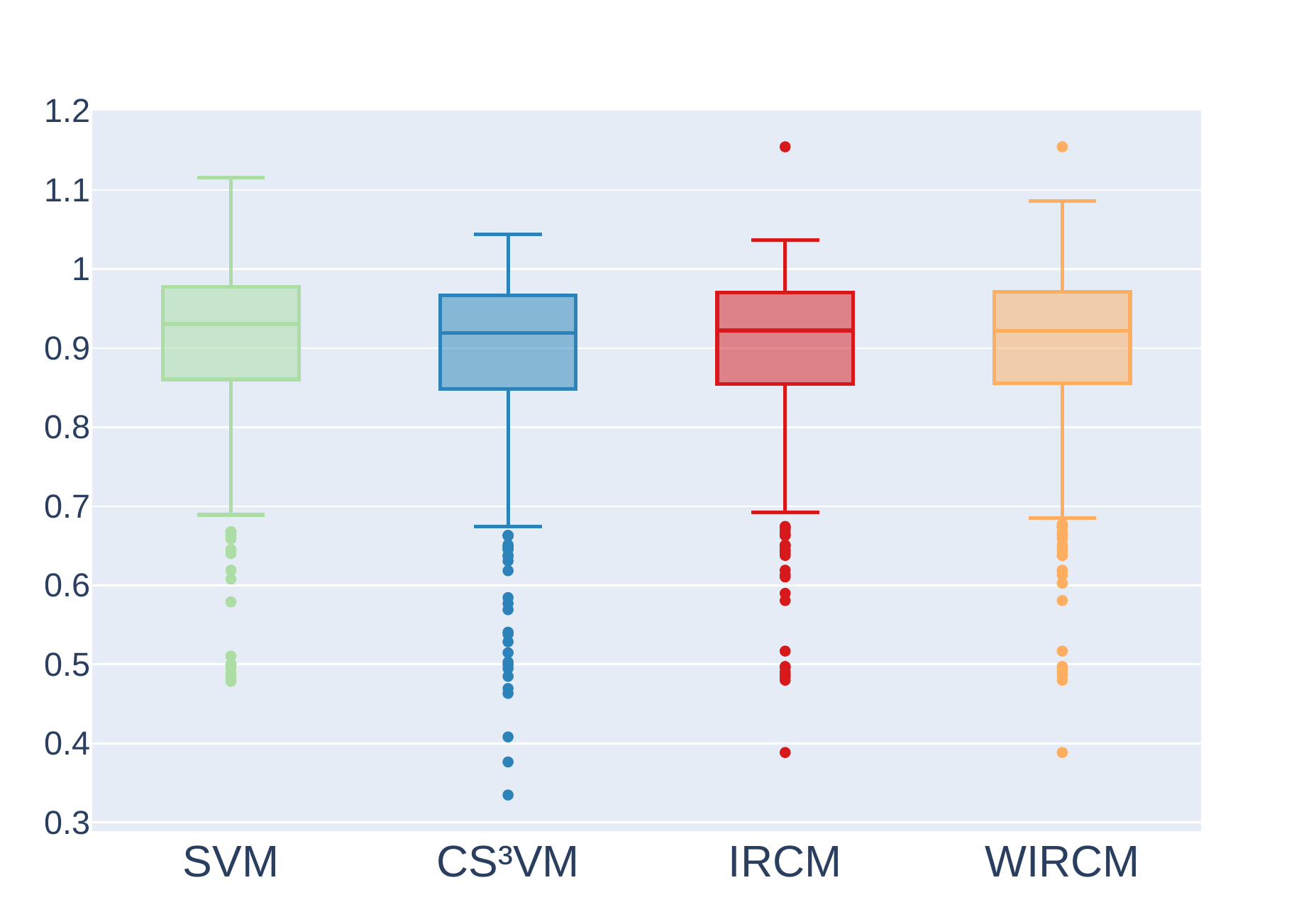}
  \caption{Relative accuracy $\widehat{\AC}$ w.r.t.\ the true
    hyperplane; see~\eqref{compartrue}, for the simple random samples.
    Left: Comparison for all data points.
    Right: Comparison only for unlabeled data points.}
  \label{ACtru2}
\end{figure}

Figure~\ref{ACtru2} and~\ref{PRtru2} show similar accuracy and
precision performance for all approaches. This is as expected, as the
sample is not biased and hence the cardinality constraint does not
contribute relevant additional information to the problem.
Therefore, the SVM does not tend to classify the points as positive as
it is the case for the biased samples.
The outliers, mainly present for CS$^3$VM, are due those instances
that are not solved within the time limit.
As can be seen in Figure~\ref{REtru2} and~\ref{FPRtru2}, recall and
false positive rate are also similar for all approaches.

Hence, for the simple random samples our approaches have almost the
same results as the SVM.
Note that for the biased samples, they outperformed the SVM.
Hence, in cases for which the type of sample is not known,
it is ``safe'' to use the newly proposed approaches for
classification.

\begin{figure}
  \centering
  \includegraphics[width=0.495\textwidth]{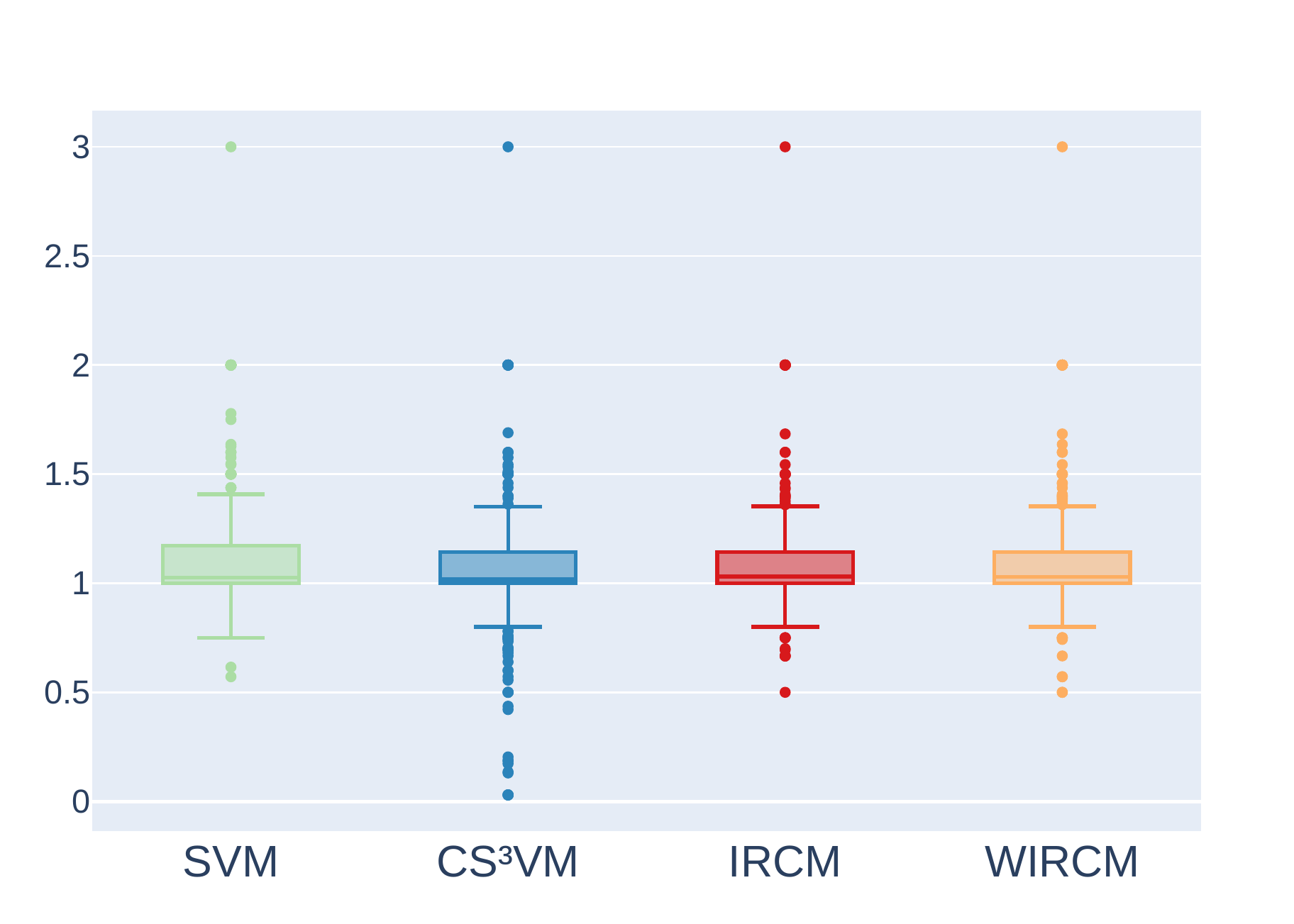}
  \includegraphics[width=0.495\textwidth]{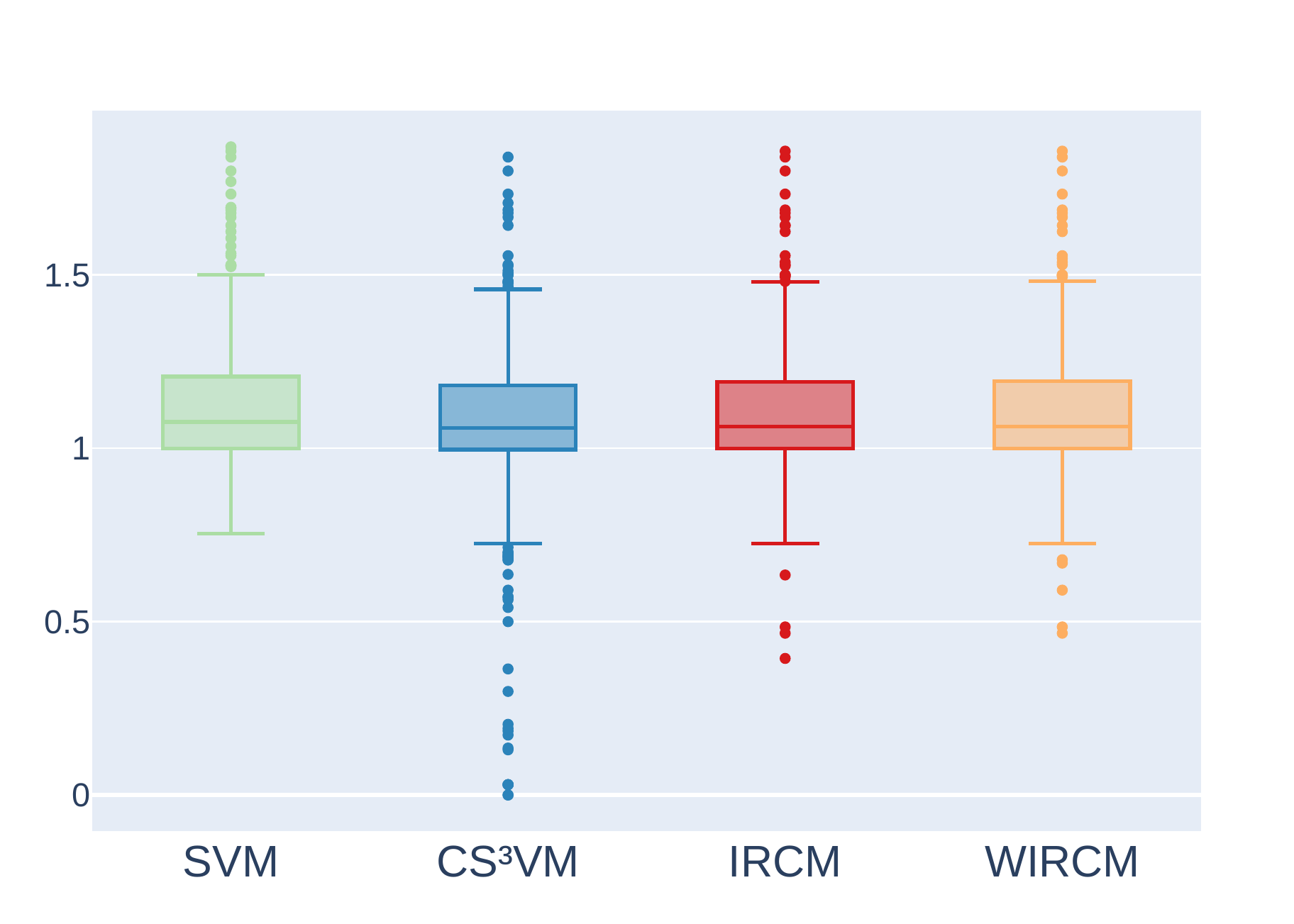}
  \caption{Relative precision $\widehat{\PR}$ w.r.t.\ the true
    hyperplane; see~\eqref{compartrue}, for the simple random samples.
    Left: Comparison for all data points.
    Right: Comparison only for unlabeled data points.}
  \label{PRtru2}
\end{figure}
\begin{figure}
  \centering
  \includegraphics[width=0.495\textwidth]{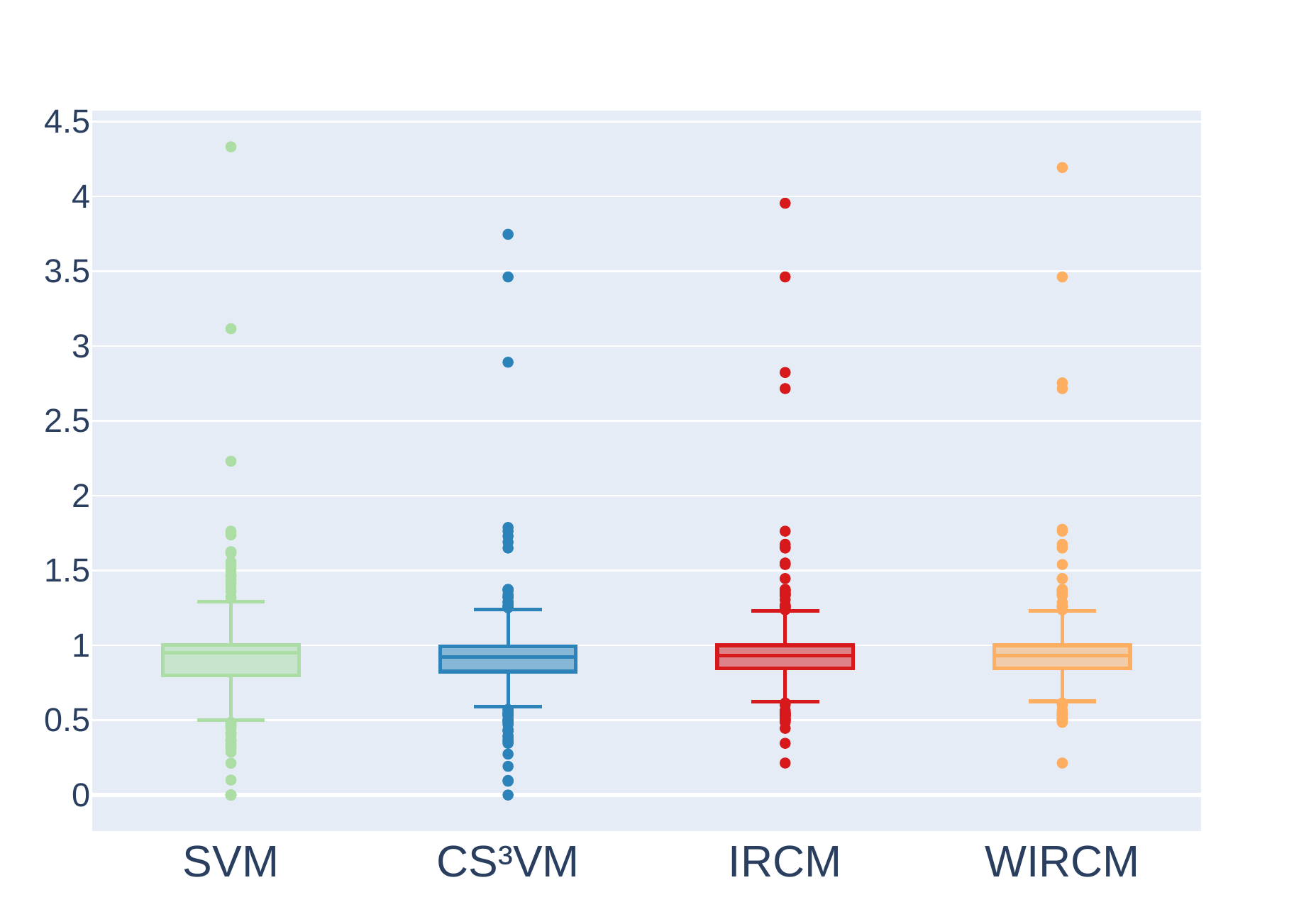}
  \includegraphics[width=0.495\textwidth]{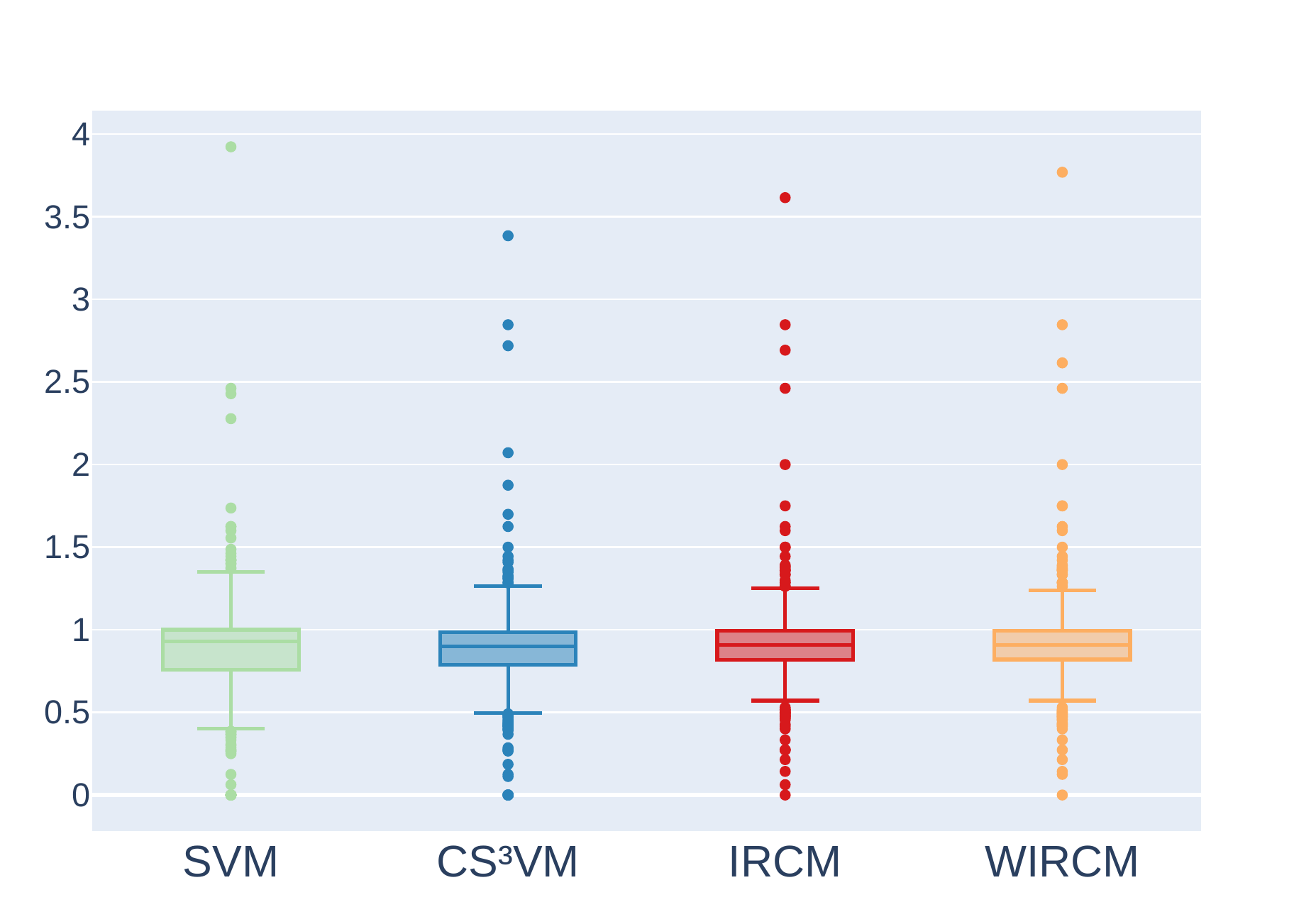}
  \caption{Relative recall $\widehat{\RE}$ w.r.t.\ the true hyperplane;
    see~\eqref{compartrue2}, for the simple random samples.
    Left: Comparison for all data points.
    Right: Comparison only for unlabeled data points.}
  \label{REtru2}
\end{figure}
\begin{figure}
  \centering
  \includegraphics[width=0.495\textwidth]{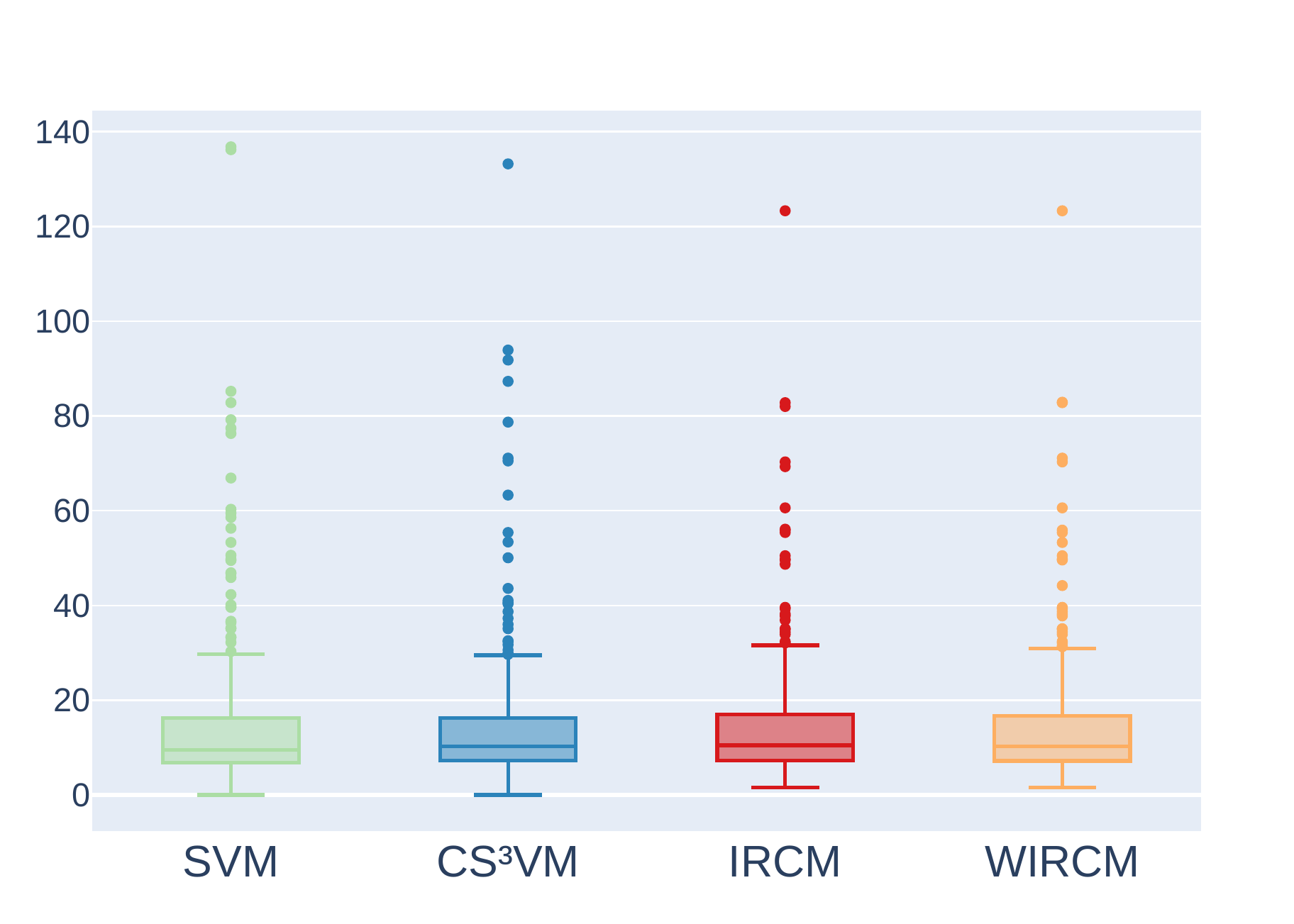}
  \includegraphics[width=0.495\textwidth]{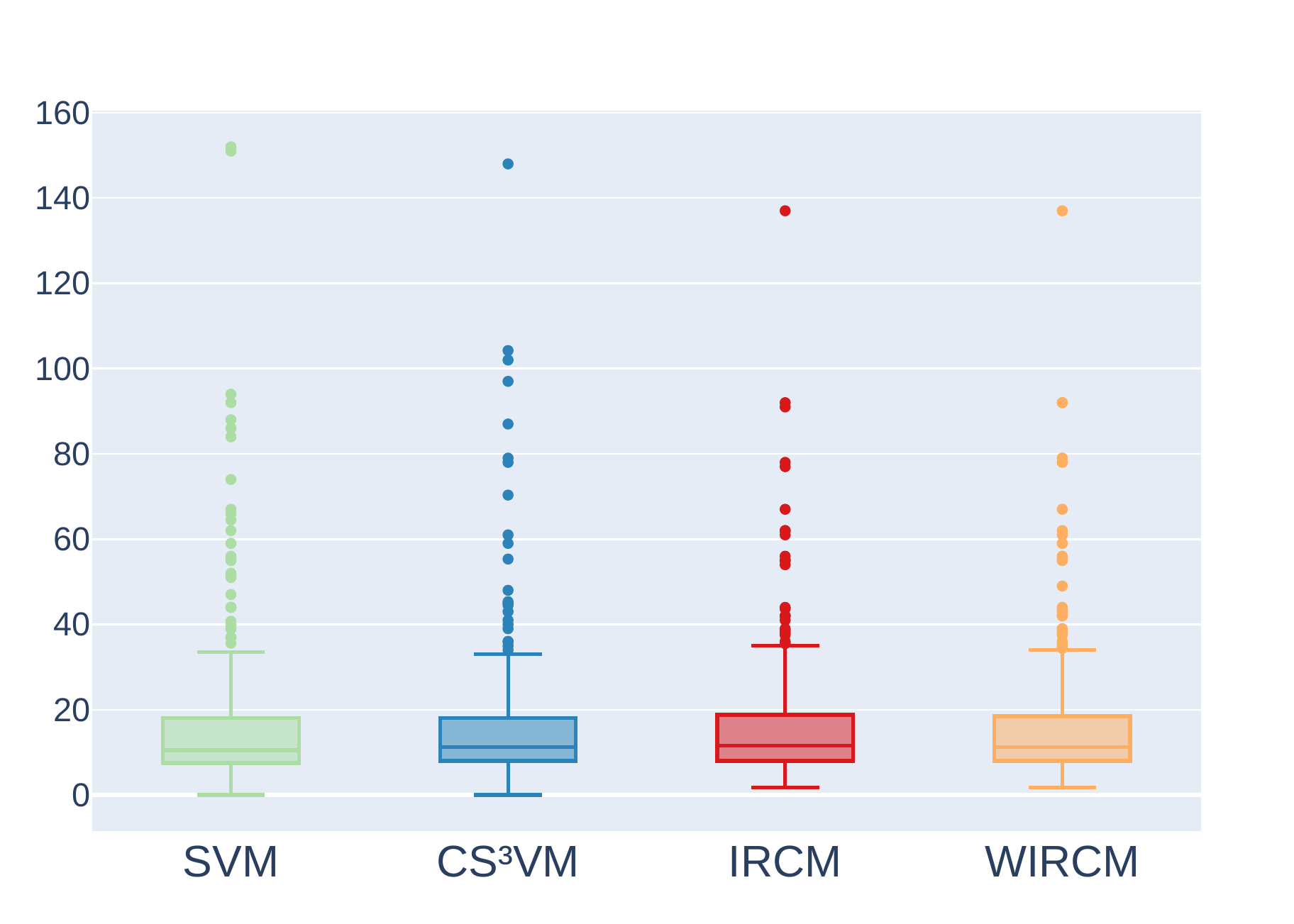}
  \caption{Relative false positive rate $\widehat{\FPR}$ w.r.t.\ the
    true hyperplane; see~\eqref{compartrue2}, for the simple random samples.
    Left: Comparison for all data points.
    Right: Comparison only for unlabeled data points.}
  \label{FPRtru2}
\end{figure}


\rev{\section{Choosing the Hyperparameters}}
\label{sec:sensibilty-parameters}

\rev{Each parameter of Algorithm~\ref{Second version} and~\ref{Scheme
    version} as well as in Problem~\eqref{l2svm} and~\eqref{equation2}
  can be chosen from a range. In Table~\ref{tab:ranges} we present
  plausible ranges for these parameters.}
\begin{table}
  \centering
  \caption{\rev{Plausible ranges for the hyperparameters}}
  \label{tab:ranges}
  \rev{
    \begin{tabular}{lll}
      \toprule
      Parameter & Plausible Range  & Current Choice \\
      \midrule
      $C_1$ & $\mathbb{R}_{\geq 0}$ & $1$ \\
      $C_2$ & $[0.5 C_1, 2C_1]$ & $1$ \\
      $k^1$ & $[2,m]$ & $10,20,50$ \\
      $k^+$ & $[k^1,m]$ & $50 $ \\
      $\hat{\Delta}^1$ & $[0.5, 0.9]$ & $0.8$ \\
      $\tilde{\Delta}$ & $[0.1 ,1-\hat{\Delta}^1]$ & $0.1$ \\
      $B_{\max}$ & $[1, m]$ & $0.2m,0.25m,0.35m,0.45m$ \\
      $\gamma$ & $[1.1, m/B_{\max}]$ & $1.2$ \\
      $T_{\max}$ & $[10, 100]s$& $40s$ \\
      \bottomrule
    \end{tabular}
  }
\end{table}

\rev{Clearly, $C_1 \in \mathbb{R}_{\geq 0}$ holds.
  However, the closer the value is to 1, the more equally important
  are maximizing the margin and minimizing the classification error
  for the labeled data. The range of $C_2$ is based on
  $C_1$ in order to indicate how much more important the unlabeled
  data is compared to the labeled data.
  Again, we choose $C_2 = 1$ so that both data have the
  same importance. Besides that, if $C_2$ is much bigger than $C_1$,
  our preliminary tests showed that this leads to focus on
  minimizing the classification error for the unlabeled data, which
  implies focusing on the binary variable and, hence, leads to larger
  run times.}

\rev{For choosing the other parameters, we consider the first $3$
  datasets presented in Table~\ref{table1} and varied the parameter
  choices in a preliminary numerical study.
  Based on the results, we now discuss how to choose the remaining
  parameters.
  The parameter~$k^1$ can be between $2$ and $m$ since we
  cluster $m$ unlabeled points.
  Note that, the smaller $k^1$, the less time per iteration is needed
  since we have fewer binary variables.
  However, more iterations may be needed to find the solution.
  On the other hand, the bigger $k^1$, the more time per iteration is
  required.
  We choose to start with a small value of~$k^1$ because in
  preliminary numerical tests, when the algorithm terminated, the
  number of clusters never exceeded $m/3$.
  Moreover, in our preliminary tests, if the algorithm exceeds
  $k^t=50$ for some iteration~$t$, it takes a lot of time to solve
  Problem~\eqref{equation3}.
  To decrease this time, we reduced the number of clusters,
  eliminating the ones being far from the hyperplane.
  This is the reason why we choose $k^+ = 50$.}

\rev{The parameter~$\hat{\Delta}^1$ indicates that clusters with a
  distance to the hyperplane greater than the
  $\hat{\Delta}^1$-quantile of all distances will be deactivated.
  It is between~$0.5$ and~$0.9$ because a smaller value than~$0.5$
  means removing points that are too close to the hyperplane.
  This implies that in next iterations many clusters can be
  reactivated.
  On the other hand, if it is larger than~$0.9$, it means that almost
  no clusters can be deactivated.
  We choose $0.8$ because in our preliminary numerical
  tests we noticed that with a smaller value, many clusters were
  activated again, which increased the required time per iteration.
  The range of $\tilde{\Delta}$ is justified by the fact that for
  all~$t$, the maximum value of $\hat{\Delta}^t$ is $1$.
  We chose $0.1$ because the higher the value we choose, the smaller the
  possibility to eliminate clusters becomes.
  If chosen smaller, $\hat{\Delta}^t$ and $\hat{\Delta}^{t+1}$
  would be very similar and some clusters would be deactivated and
  reactivated several times.}

\rev{Because we have $m$~unlabeled points, we can fix at most $m$
  unlabeled points, which justifies the range of $B_{\max}$ and the
  maximum value of $\gamma$.
  Since some points are not fixed on some side---they may be on the
  wrong side or it could take more than $T_{\max}$ to solve
  Problem~\eqref{equation5}---we try to fix at least more than
  \SI{10}{\percent} of $B_{\max}$ many unlabeled points.
  This is why the minimum value of $\gamma$ is 1.1.
  The maximum value of $T_{\max}$ is \SI{100}{\second} because,
  if chosen smaller, we observe that there is often not enough time to
  solve Problem~\eqref{equation5}.
  On the other hand, if it is larger, we observe that the time needed
  to solve the Algorithm~\ref{Scheme version} increases.}


\rev{\section{Run Times in Dependence of the Number of Data Points}}
\label{sec:performancepersize}

\rev{In this section, we complement
  Section~\ref{sec:numerical-results} by presenting the run times in
  dependence of the number of points in the data set in order to shed
  some light on the scalability of our approaches.
  To this end, we split the entire data set in three subsets.}

\begin{figure}
    \centering
    \includegraphics[width=0.7\textwidth]{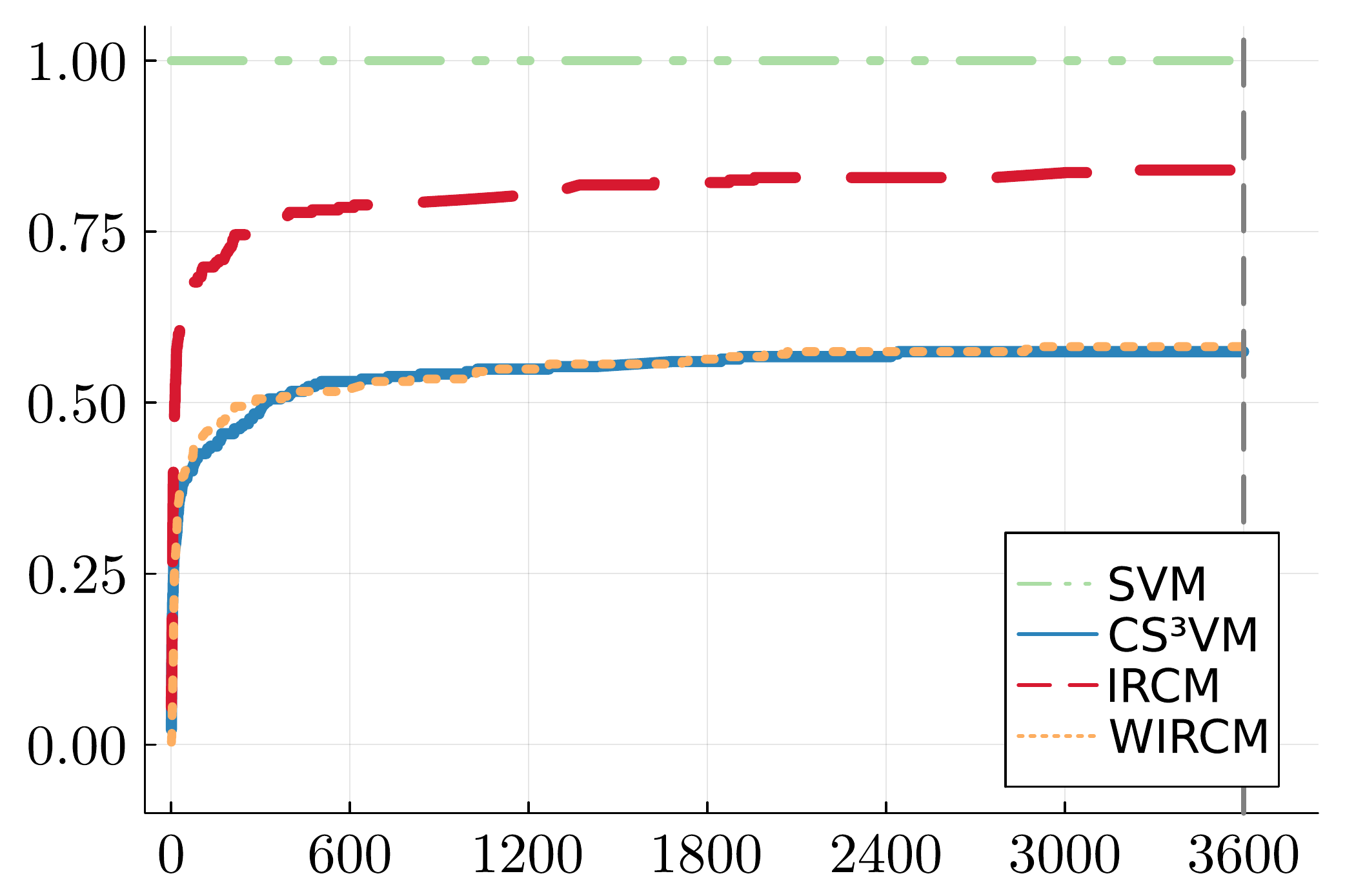}
    \includegraphics[width=0.7\textwidth]{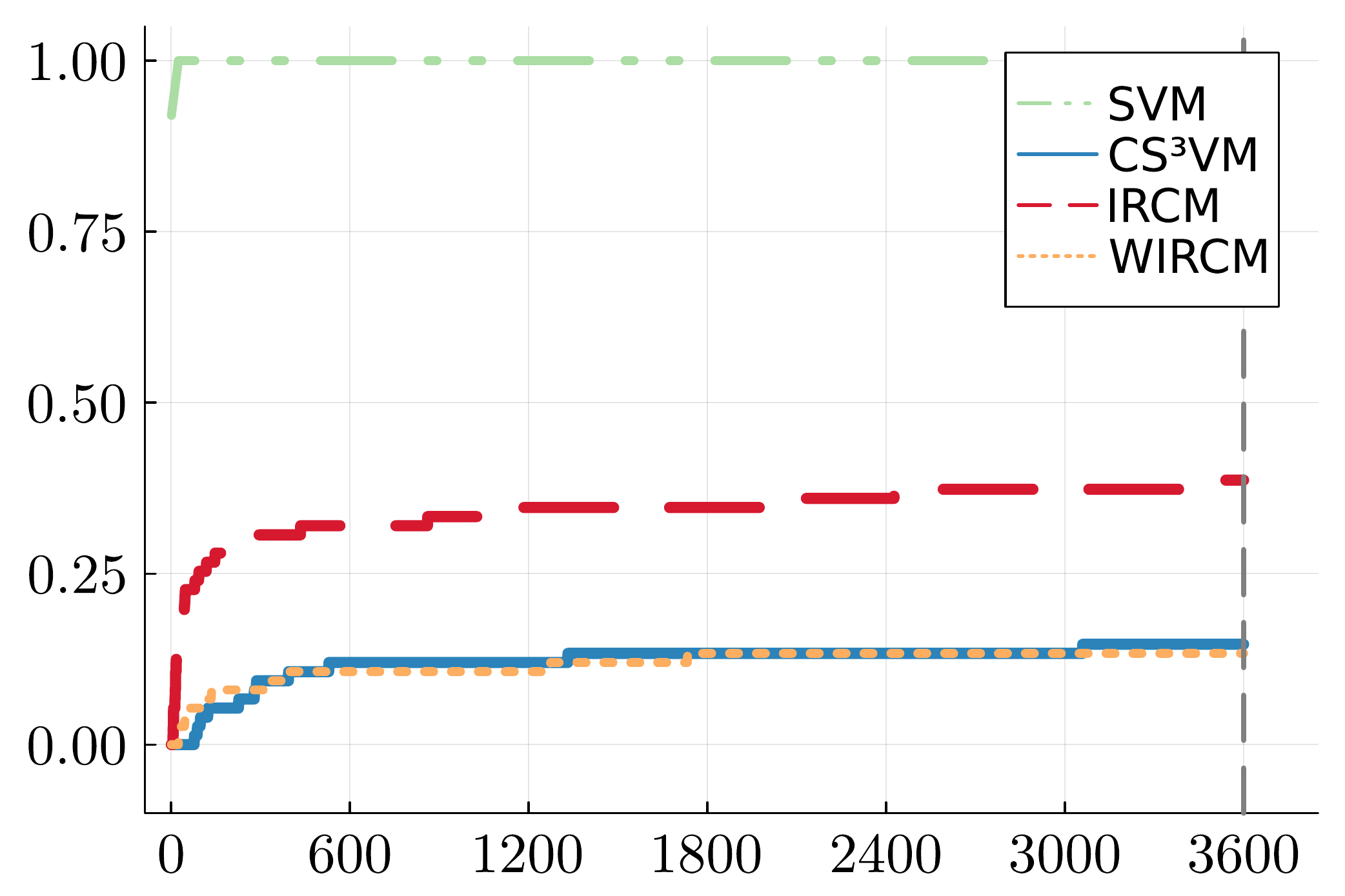}
    \includegraphics[width=0.7\textwidth]{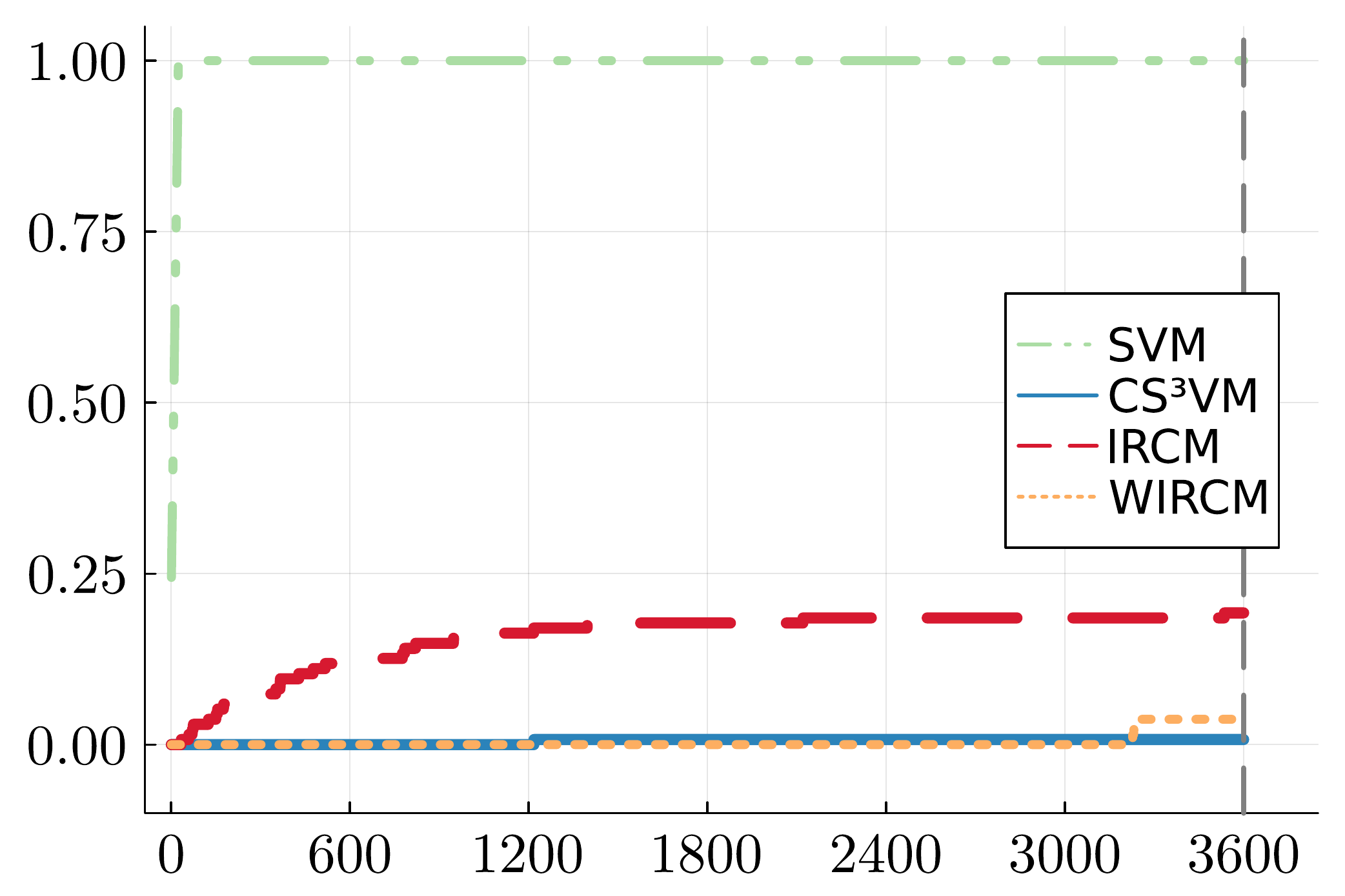}
    \caption{\rev{ECDFs for run time (in seconds).
        Top: Instances with $N \geq 500$.
        Middle: Instances with $N \in  (500,1500]$.
      Bottom: Instances with $N > 1500$.}}
    \label{fig:mainfig}
\end{figure}

\rev{The first subset only considers those 46 data sets with $N \leq
  500$.
  As can be seen in Figure~\ref{fig:mainfig} (top), IRCM
  solves more than \SI{75}{\percent} of the instances while CS$^3$VM
  and WIRCM solve more than \SI{50}{\percent}.
  The second subset contains 11 data sets with $N \in (500,1500]$.
  Figure~\ref{fig:mainfig} (middle) shows that for these test sets, IRCM solves
  about \SI{40}{\percent} of the instances while CS$^3$VM and WIRCM
  solve more than \SI{10}{\percent}.
  The last subset contains those 21 data sets with $N > 1500$.
  Figure~\ref{fig:mainfig} (bottom) shows that CS$^3$VM and WIRCM do not solve any of
  these instances and IRCM solves about \SI{20}{\percent}.
  As expected, the larger the number of points and, thus, the larger
  the number of binary variables, the more challenging it is to solve
  the instances.
  Besides that, SVM solves all instances, which is expected since it
  does not include any binary variables.}


\end{document}